\definecolor{mygreen}{RGB}{28,172,0} 
\definecolor{mylilas}{RGB}{170,55,241}
\definecolor{mycolor}{RGB}{0, 204, 204}
\newtheorem{theorem}{Theorem}[section]
\newtheorem{assumption}[theorem]{Assumption}
\newtheorem{lemma}[theorem]{Lemma}
\newtheorem{proposition}[theorem]{Proposition}
\newtheorem{remark}[theorem]{Remark}
\numberwithin{equation}{section}
\renewcommand{\eqref}[1]{(\ref{#1})}
\definecolor{mycolor}{RGB}{0, 204, 204}
\definecolor{mycolor1}{RGB}{   232, 247, 251 }
\definecolor{mycolor2}{RGB}{   1, 104, 137 }
\newtcolorbox{mybox}[1]{colback=red!5!white,colframe=red!75!black,fonttitle=\bfseries,title=#1}
\newtcolorbox{resultbox}[1]{colback=red!5!white,colframe=red!75!black,fonttitle=\bfseries,title=#1}
\newtcolorbox{applicationbox}[1]{colback=mycolor1,colframe=mycolor2,fonttitle=\bfseries,title=#1}
\newtcolorbox{citationbox}[1]{colback=white,colframe=white,fonttitle=\bfseries,title=#1}
\newcommand {\CR}{{\rm CR}}
\newcommand{\N}{\mathbb{N}}
\begin{document}
\title{\textbf{A Kermack-McKendrick model with age of infection starting from a single or multiple cohorts of infected patients}}
\author{\textsc{Jacques Demongeot$^{(a)}$, Quentin Griette$^{(b),}$\footnote{Q.G. acknowledges support from ANR via the project Indyana under grant agreement ANR-21-CE40-0008.} , Yvon Maday$^{(c)}$, and Pierre Magal$^{(b),}$\thanks{Corresponding author. e-mail: \href{mailto:pierre.magal@u-bordeaux.fr}{pierre.magal@u-bordeaux.fr}}}\\
{\small \textit{$^{(a)}$Université Grenoble Alpes, AGEIS EA7407, F-38700 La Tronche, France}} \\
{\small \textit{$^{(b)}$Univ. Bordeaux, IMB, UMR 5251, F-33400 Talence, France.}} \\
{\small \textit{CNRS, IMB, UMR 5251, F-33400 Talence, France.}}\\
{\small \textit{$^{(c)}$Sorbonne Université and Université de Paris, CNRS, Laboratoire Jacques-Louis Lions (LJLL), F-75005 Paris, France}}\\
{\small \textit{Institut Universitaire de France, 75005 Paris, France}}
}
\maketitle
\begin{abstract}
	During an epidemic, the infectiousness of infected individuals is known to depend on the time since the individual was infected, that is called the age of infection. Here we study the parameter identifiability of the Kermack-McKendrick model with age of infection which takes into account this dependency. By considering a single cohort of individuals, we show that the daily reproduction number can be obtained by solving a Volterra integral equation that depends on the flow of new infected individuals. We test the consistency our the method by generating data from deterministic and stochastic numerical simulations. Finally we apply our method to a dataset from SARS-CoV-1 with detailed information on a single cluster of patients. We stress the necessity of taking into account the initial data in the analysis to ensure the identifiability of the problem.
\end{abstract}

\medskip 

\noindent \textbf{Keywords:} \textit{Age of infection; epidemic model; single and multiple cohorts; cluster of infected patients; daily reproduction number; Volterra equation; parameter identification. }


\section{Introduction}
The existence of an individual infection (or contagiousness) period of variable length among infected individuals and during the history of an epidemic is a proven fact in all contagious diseases. The origin of this variability is multiple. It may be due to 
\begin{itemize}
	\item[ \rm{(i)} ] 	a variation in the symptomatic state of the infected person (such as the change in frequency and intensity of the cough, source of emission of the infectious agent) due to variable immune defenses since the beginning of his infection (e.g., due to the transition between initial innate immune reaction and secondary adaptive reaction);
		\item[ \rm{(ii)}] a variation in the environmental conditions of transport and survival of the infectious agent in the atmosphere (heat, humidity, sunshine, altitude, etc.), in a more or less favorable socio-sanitary environment presenting different spreading characteristics (as compliance with distancing or confinement, existence of preventive vitamin supplementation, educational level, food quality, demographic isolate, etc.);
	\item[ \rm{(iii)}] 	a variation in the state of defense of the final host (immune defense and self-protection);
	\item[ \rm{(iv)}] 	a variation in the virulence of the infectious agent, which can mutate or possibly change of intermediary host.		
\end{itemize}

Due to the complexity of the contagion mechanisms and of their variations, accentuated by the fact that the rate of the epidemic may be the result of several geographically distant clusters starting at different times, it is preferable, in a first approach, to assume that the initial conditions are reduced to a single cluster. The challenge is then to estimate, on each day of the infection period, the transmissibility rate allowing the calculation of the daily reproduction number of the contagious disease.

\medskip

\medskip 
\noindent \textbf{Continuous time model:} Recall that the age of infection $a$ is the time since individuals become infected.  The major difficulty in comparing the data and the Kermack-McKendrick model with age of infection is to identify: 1) the initial distribution of infected with respect to the age of infection; 2) the daily reproduction number $R_0(a)$ which is the reproduction number at the age of infection $a$ (i.e. the average number of secondary cases produced by a single infected at the age of infection $a$).  We can decompose the daily reproduction number as follows  
\begin{equation*} \label{1.7}
	R_0(a)= \underset{ {\rm (A)}}{\underbrace{   \tau_0 }} \times \underset{ {\rm (B)}}{\underbrace{ S_0 }}   \times  \underset{ {\rm(C)}}{\underbrace{  \beta(a) }} \times  \underset{ {\rm (D)}}{\underbrace{  e^{-\nu a}  }},
\end{equation*}
where (A) is the rate of transmission rate at time $t_0$ (we assume the transmission rate to be constant during the period where $R_0(a)$ is evaluated), (B) is the number of susceptible individuals at time $t_0$ (we assume the number of susceptibles to be constant during the period where $R_0(a)$ is evaluated), (C) is the probability that an infected who have been for $a$ days is infectious i.e., capable to transmit the pathogen (or the fraction of infectious among the infected with age of infection $a$), (D)  the probability for an infected for $a$ days to be still infected.

Then the basic reproduction number (i.e. the number of secondary cases produced by a single infected) is given by 
\begin{equation*} \label{1.2}
	R_0=\int_0^\infty R_0(a)da. 	
\end{equation*}

Here we partly solve the problem of finding the initial distribution of infected by assuming that we start the epidemic at time $t_0$ with a single cohort of $I_0$  new infected patients. That is, the epidemic starts with $I_0$ infected patients all with age of infection $a=0$. The case of an epidemic starting from a single infected patient (usually called the patient $0$) corresponds to the case $I_0=1$. This is a common assumption in epidemiology. Note that the time $t_0$ at which the first patient becomes infected is also unknown for most epidemics.  

Assume that the epidemic starts at time $t_0$ with a cohort of $I_0$ new infected patients (i.e., all with age of infection $a=0$). Then $N(t)$ the flow of infected at time $t$  satisfies the model starting from a single cohort of infected
\begin{equation} \label{1.1}
	N(t)=\underset{ {\rm (I)}}{\underbrace{  R_0(t-t_0) \times  I_0 }}+ \underset{ {\rm (II)}}{\underbrace{\int_0^{t -t_0} R_0(s) \times  N(t-s)ds}}, \forall t \geq t_0, 
\end{equation}
where  \rm{(I)} is the flow of  infected produced directly by the $I_0$ infected individuals already present on day $t_0$; and \rm{(II)} is the flow of new infected individuals at time $t$  produced by the new infected individuals since day $t_0$.  

Here what we call ``flow of new infected individuals'' is the distribution $N(t)$ such that 
$$
\int_{t_1}^{t_2} N(\sigma) d \sigma, 
$$
is the number of new infected  individuals  during the period of time $[t_1, t_2]$.

\medskip
The equation \eqref{1.1} will be derived as an extension of the Kermac-McKendrick model starting from a single cohort of infected (see equation \eqref{4.6}). This  model remains valid as long as the transmission rate  $\tau(t)$ and the number of susceptible hosts $S(t)$ remain constant. So this model is valid when the epidemic starts.  

\medskip
Assume that $I_0$ is fixed and the function $a \to R_0(a)$ is  given. Then  the map $t \to N(t)$ can be obtained by solving \eqref{1.1}. The goal of the article is consider the converse problem. That is, assume that $I_0$ is fixed and assume that $t \to N(t)$ is given from the data. Then the map $a \to R_0(a)$ can be obtained by solving the Volterra integral equation  
\begin{equation} \label{1.2}
	R_0(a)=  \dfrac{N(a+t_0)}{I_0} -  \dfrac{1}{I_0}  \int_0^{a} R_0(s) N(a-s+t_0)ds , \forall a \geq 0. 
\end{equation}
Therefore if the map $t \to N(t)$ is known, we can theoretically derive the average dynamics of infection at the level of a single patient.

\medskip 
The standard assumption used in the literature is
	\begin{equation} \label{1.3}
		R_0(a)=0, \forall a \geq a_+, 	
	\end{equation}
	where $a_+>0$ the maximal age of infectiousness for an infected patient.  
	
	\medskip 
	Then for $t \geq t_0+a_+,$ the equation \eqref{1.1} becomes 
	\begin{equation} \label{1.4}
		N(t)=\int_0^{a_+} R_0(s) \times  N(t-s)ds. 
	\end{equation}
	Now assume for example that $N(t)=N_0 e^{\lambda t },$ we obtain after simplifications a standard characteristic equation
	\begin{equation} \label{1.5}
		1=\int_0^{a_+} R_0(s) \times  e^{-\lambda s }ds. 
	\end{equation}
	Assume that $\lambda>0$ is given. Consider $a \to \chi(a)$ any non-negative and non-null  continuous function satisfying 
	$$
	\chi(a)=0, \forall a \geq a_+.
	$$
	Then 
	$$
	R_0(a)=\dfrac{\chi(a)}{\int_0^{a_+} \chi(s) \times  e^{-\lambda s }ds }
	$$
	satisfies \eqref{1.5}. Therefore, neglecting the initial value in the Volterra equation leads to a non-identifiable problem (in general). This shows the crucial role of the initial value in identifying the function $a \to  R_0(a)$.  

\medskip 
\noindent \textbf{Day by day model:} The model \eqref{1.1} with a single cohort of infected becomes  a discrete Volterra equation
\begin{equation} \label{1.6}
	N(t)= \underset{ {\rm (I)}}{\underbrace{  R_0(t-t_0) \times I_0 }}+\underset{ {\rm (II)}}{\underbrace{    \sum_{d=1}^{t-t_0}R_0(d) \times  N(t-d) }} ,  \forall t \geq t_0, 
\end{equation}  
where  \rm{(I)} is the number of  infected produced directly by the $I_0$ infected individuals already present on day $t_0$; and \rm{(II)} is the number of new infected individuals at time $t$  produced by the new infected individuals since day $t_0$.  

\medskip 
Next, by setting $a=t-t_0$, we obtain the day by day equation for the daily reproduction number  
\begin{equation} \label{1.7}
	R_0(a) = \dfrac{N(t_0+a)}{ I_0}-  \dfrac{1 }{I_0}\sum_{d=1}^{a} R_0(d) \times  N(t_0+a-d), \forall a \geq 0.
\end{equation}  
In the above formula and throughout the paper, we use the following convention for the sum 
$$ 
\displaystyle \sum_{d=k}^{m}  =0, \text{ whenever } m<k. 
$$ 
\medskip 
If we consider the first terms of the discrete time Volterra equation \eqref{1.6}, we obtain   
\begin{equation*} 
	\begin{array}{ll}
			N(t_0)&=	 R_0(0) \times I_0, \vspace{0.1cm} \\ 
				N(t_0+1)&=	 R_0(1) \times I_0+R_0(1)\times  N(t_0), \vspace{0.1cm} \\ 
				N(t_0+2)&=	 R_0(2) \times I_0+R_0(2) \times N(t_0)+R_0(1) \times  N(t_0+1), \vspace{0.1cm} \\ 
				N(t_0+3)&=	 R_0(3) \times I_0+R_0(3) \times  N(t_0)+R_0(2) \times N(t_0+1)+ R_0(1) \times N_0(t_0+2),\\
				\quad\vdots
	\end{array}
\end{equation*}
\medskip 
In practice, we can assume that $R_0(0)=0$ since infected individuals are not infectious immediately after being infected. Under this additional assumption, we obtain the system 
 \begin{equation*} 
 	\begin{array}{ll}
 		N(t_0)&=	0, \vspace{0.1cm} \\
 		N(t_0+1)&=	 R_0(1) \times I_0, \vspace{0.1cm}\\
 		N(t_0+2)&=	 R_0(2) \times I_0+R_0(1) \times  N(t_0+1), \vspace{0.1cm} \\
 		N(t_0+3)&=	 R_0(3) \times I_0+R_0(2) \times N(t_0+1)+ R_0(1) \times N_0(t_0+2),\\
 		\quad \vdots
 	\end{array}
 \end{equation*}

\medskip 
When reliable information is available on the first cluster(s), the best formula for calculating daily basic reproduction numbers is equation \eqref{1.2} (or its the discrete time version \eqref{1.7}). Based on \eqref{1.4}, some methods have been developed in the literature to cope with the lack of a precise information.
 
For instance  in  \cite{AMM21, AMM22}, the authors following \cite{Nishiura2} propose an optimization algorithm for estimating the daily basic reproduction numbers.  In \cite{Bernoulli}, D. Bernoulli mentions in 1760 the changes in the contagiousness parameters and places as a crucial challenge for the prediction of the transition between endemic and epidemic peaks in a prophetic sentence: ``Le retour d’une épidémie longtemps suspendue fait un ravage plus terrible dans une seule année qu'une endémie uniforme ne pourrait faire pendant un nombre d'années considérable'' (The return of a long-suspended epidemic wreaks more terrible havoc in a single year than a uniform endemic could do for a considerable number of years). In \cite{DORSTW}, the authors use a deconvolution algorithm for calculating the daily basic reproduction numbers. In each case, the problem of the initial conditions is evoked at best only through the hypothesis of a unique ``patient zero''. Despite the considerable means of current investigation, in particular those of the WHO and the members of the government of the WHO, it is rare that this patient is identified (this was the case for H1N1 in Mexico). The patient zero, also called index or primary case is the first patient identified in a given population during an epidemiological investigation. It points out the source of the spread of a disease in a given reservoir, 
but this search is in general very difficult as it was the case for HIV in North America \cite{Worobey}.

	\medskip 
One of the main difficulties in estimating the $R_0(a)$ function is its non-identifiability in general. Recent studies \cite{LMSW1, DGM1, DGM2, DGM3, DGMW} developed methods to identify the various parameters for the COVID-19 pandemic by using cumulative reported cases data and differential equations models. Differential equations can be written in the form studied here by assuming that $R_0(a)$ (or equivalently, $\beta(a)$) is independent of $a$. Suppose that we are restricted to a period when the data is growing exponentially fast. If we take a fixed function $\beta(a)$, then by adapting the method developed in \cite{LMSW1}, we could identify a transmission rate $\tau$  so that the output of the model stays very close to the data, \textit{for any function $\beta(a)$}. The same could be achieved with a good phenomenological description of the data by using the method developed in \cite{DGM1, DGM2, DGM3} with a time-dependent transmission rate.   This means that the reported cases data is not sufficient to determine accurately the function $R_0(a)$. Without a good description of the initial distribution, it is hopeless to identify $R_0(a)$ by using reported cases only.

\medskip 
 In this article, we first extend the Kermack-McKendrick model to initial conditions that are a linear combination of Dirac masses. The PDE model \eqref{2.1} can not be extended in the space of measures (due to a lack of  time continuity for the solutions). However, the Volterra integral equation can be extended and still makes perfect sense whenever we use Dirac masses for the initial condition. In many real examples, the initial distribution must be a linear combination of Dirac masses since the data are discrete at the early stage of an epidemic (in a city, a country). Indeed, at the early beginning of an epidemic, the epidemic starts from a few cases imported from other places. Therefore Dirac masses make perfect sense. In practice, the early stage of an outbreak is often undocumented and generally difficult to determine. But our study applies to data from a finite collection of clusters that is easier to determine using contact tracing.

\medskip 	
Consequently, for the single cohort model, we can reverse the problem, and by assuming that the daily number of new infected is known, we can compute the daily reproduction number by solving a Volterra integral equation. The daily basic reproduction number informs us about the dynamics of infection at the level of a single patient. Therefore, knowing $R_0(a)$ should help the medical doctors decide about quarantine measures. Reported case data for clusters are particularly valuable for reconstructing the dynamics of infection at the level of a single individual.
 
\medskip 	
In this paper, we also provide an Individual-Based Model  (IBM)  (see Appendix \ref{AppendixB}). This IBM converges to the deterministic  model whenever the initial number of infected increases. We use this IBM to generate sample data to test our method and compute the daily basic reproduction number. This will allow us to test the effects of the day-to-day discretization (on the data) and the impact of stochastic  perturbations on the daily reproduction number. We conclude the paper by applying our approach to a cluster of SARS-CoV-1 in Singapore.


\medskip 
The plan of the paper is the following. In Section \ref{Section2}, we recall the Kermack-McKendrick model with age of infection. We explain how to derive the Volterra formulation of the model, and we compare it with the  Kermack-McKendrick SI model with age of infection (ODE model). In Section \ref{Section3}, we explain how to connect the model with the data. In Section \ref{Section4}, we extend the Kermack-McKendrick model with age of infection in the case where the epidemic starts from a single or multiple cohorts of infected individuals. In Section \ref{Section5} we derive an equation to  compute the daily reproduction number from the data. In Section \ref{Section6} we consider a day by day discretized Kermack-McKendrick model with age of infection. In Section \ref{Section7}, we run some numerical simulations, we compare the deterministic model  with a stochastic individual based simulation presented in appendix. In Section \ref{Section8}, we compare the model with some data from SARS-CoV-1, and we discuss the data from SARS-CoV-2.

\section{Kermack-McKendrick model with age of infection}
\label{Section2}
\subsection{Partial differential equation formulation of the model}
The age of infection  $a$ is the time since individuals become infected. Let $a \to i(t,a)$ be the distribution of population of \textit{infected individuals} at time $t$ (with respect to $a$ the age of infection). The term distribution of population means that the integral 
$$
\int_{a_1}^{a_2} i(t,a)da
$$ 
is the number of infected at time $t$ with infection age between $a_1$ and $a_2$. Therefore the total number of infected individuals at time $t$ is
$$
I(t)=\int_{0}^{+\infty} i(t,a)da.
$$
Let  $ \beta(a)  \in [0,1]$  be the probability to be contagious or infectious (i.e. capable to transmit the pathogen) at the age of infection $a$.  The quantity $ \beta(a)$ can be interpreted as the fraction of infected individuals with age of infection $a$ that are infectious. 
Then the total number of  \textit{contagious individuals} (or also called \textit{infectious individuals}) (i.e., the individuals capable of transmitting the pathogen) at time $t$ is
$$
C(t)= \int_0^{+ \infty} \beta(a) i(t,a)da. 
$$
The model of Kermack-McKendrick  \cite{Kermack-McKendrick2} with age of infection  is the following, for each  $t \geq t_0$ 
\begin{equation}\label{2.1}
	\left\lbrace
	\begin{array}{l}
		S'(t)=-  \tau(t) \,  S(t) \, \displaystyle \int_0^{+ \infty} \beta(a) \, i(t,a)  da,  \vspace{0.2cm}\\
		\partial_t i +\partial_a i= - \nu \, i(t,a), \text{ for } a \geq 0, 	 \vspace{0.2cm}\\
		i(t,0)= \tau(t) \, S(t) \,\displaystyle  \int_0^{+ \infty} \beta(a) \, i(t,a)  da, 
	\end{array}
	\right.
\end{equation}
this system is supplemented by initial data 
\begin{equation}\label{2.2}
	S(t_0)=S_0 \geq 0, \text{ and } i(t_0,a)=i_0(a) \in L^1_+(0, \infty).
\end{equation}
In the model, $S(t)$ is the number of susceptible individuals at time $t$, and  $t \to \tau(t)$ is the transmission rate at time $t$, and $ \nu \geq 0 $ is the rate at which  individuals die or recover. Here, the parameter $\nu$ is assumed to be independent of the age of infection $a$. 
This is a simplifying assumption to improve the readability of the paper.  The parameter $\nu$ combines both the specific fatality rate and the recovery rate.  

\medskip 
The above equation can be understood first as follows 
$$
I'(t) =  \underset{ {\rm (I)}}{\underbrace{   \tau(t) \, S(t) \, \int_0^{+\infty} \beta(a) \, i(t,a) da}}- \underset{{\rm (II)}}{\underbrace{   \int_0^{+\infty} \nu \, i(t,a) da}}, 
$$
where (I) is the flow of new infected, and (II) is the flow of individuals who die or recover.

\medskip 
We make the following assumption. 
\begin{assumption} \label{ASS2.1}	We assume that  
	\begin{itemize}
		\item[{\rm (i)}] The transmission rate $t \to \tau(t)$  is a bounded continuous map from $[t_0, +\infty)$ in $[0, +\infty)$; 
	  \item[{\rm (ii)}] The probability to be infectious at the age of infection $a \to \beta(a) \in L^\infty_+(0, +\infty)$  is a non-negative and measurable function of $a$  which is bounded by $1$; 
	\end{itemize}
\end{assumption}

%

\subsection{Volterra integral equation formulation of the model}
In the model \eqref{2.1}, the quantity 
\begin{equation}\label{2.3}
	N(t):=\tau(t) \,  S(t) \,  \int_0^{+\infty} \beta(a) \, i(t,a) da,  
\end{equation}
is the flow of new infected individuals at time $t$. 

\medskip 
By using the $S$-equation in system \eqref{2.1}, we obtain 
\begin{equation} \label{2.4}
 	S(t)=S_0- \int_{t_0}^t N( \sigma) d \sigma, \forall t \geq t_0.
\end{equation} 
By integrating the $i$-equation of system \eqref{2.1} along the characteristics, we obtain 
\begin{equation} \label{2.5}
	i(t,a) =\left\lbrace 
	\begin{array}{ll}
		e^{- \nu \, \left( t-t_0\right)   } \, i_0 \left(a- \left(t-t_0\right) \right),& \text{ if } a \geq t-t_0, \vspace{0.2cm} \\
		e^{-  \nu \, a }  N(t-a),  &\text{ if } t-t_0\geq a. \\
	\end{array}
	\right.
\end{equation}
By using \eqref{2.5}, we deduce that $ t \to N(t)$ satisfies the following Volterra integral equation  
\begin{equation} \label{2.6}
	N(t)=\underset{{\rm (I)}}{\underbrace{  \tau(t) \, S(t) \,  \int_{t-t_0}^{+ \infty}  \beta(a) \,  e^{-  \nu \, \left( t-t_0\right)  } i_0 \left(a- \left(t-t_0\right) \right) da}}+ \underset{{\rm (II)}}{\underbrace{  \tau(t) \, S(t) \, \int_0^{t-t_0} 	\beta(a) 	e^{-  \nu \, a }   N(t-a)\, da  }}
\end{equation}
where ${\rm (I)}$ is the flow of new infected individuals at time $t$ produced by the infected individuals already present on day $t_0$; ${\rm (II)}$ is the flow of new infected individuals at time $t$   produced by the new infected individuals since day $t_0$. 


\medskip 
By using equations \eqref{2.4} and \eqref{2.6},  we can summarize the epidemic model \eqref{2.1},  by saying that $t \to N(t)$ is the unique continuous map satisfying  
\begin{equation} \label{2.7}
N(t)=\tau(t)\, S(t) \left[ \Lambda(t)+ \int_0^{t-t_0} 	\beta(a) 	e^{-  \nu \, a }   N(t-a) \, da\right], \forall t \geq t_0, 
\end{equation}
where 
\begin{equation} \label{2.8}
	S(t)=S_0-\int_{t_0}^t N(\sigma ) d \sigma, \forall t \geq t_0, 
\end{equation}
and
\begin{equation} \label{2.9}
	\Lambda(t):=  e^{-  \nu \, \left( t-t_0\right)  } \int_{t-t_0}^{+ \infty} \beta(a) \,	 i_0 \left(a- \left(t-t_0\right) \right) da , \forall t \geq t_0.
\end{equation} 
The function $\Lambda(t)$ is the number of infectious individuals (capable to transmit the pathogen) at time $t$ among the infected individuals already present at time $t_0$.

\medskip 
The function  $t \to \Lambda(t)$ plays a fundamental role in solving the Volterra equation. Indeed, the quantity 
$$
\int_{t_1}^{t_2}\tau(\sigma)\, S(\sigma) \Lambda(\sigma) d \sigma ,
 $$
is the number of  infected produced between the instants $t_1$ and $t_2$ by the infected already present at time $t_0$.  So, for example, if no new infected are produced by the infected already present at time $t_0$, that is if 
$$
\Lambda(t)=0, \forall t \geq t_0,
$$
then there will be no new infected at all after the time $t_0$, that is 
$$
N(t)=0, \forall t \geq t_0. 
$$ 
The function $t \to \Lambda(t)$ can be regarded as the initial distribution of the Volterra integral equation \eqref{2.7}.  

\section{Connecting the data and the model}
\label{Section3}
The data are represented by the function $t \to \CR(t)$ which is the cumulative number of reported cases at time $t$. We assume that the flow of reported cases is a fraction  $0 \leq f \leq 1$  of the flow of recovering individuals, that is 
\begin{equation} \label{3.1}
	\CR'(t)=f \,  \nu \, \int_0^{+ \infty} i(t,a)da.
\end{equation}
By using \eqref{2.5}, we can compute the number of infected at time $t$. That is  
\begin{equation} \label{3.2}
	\int_0^{+ \infty} i(t,a)da=  e^{-  \nu \, \left(t-t_0\right)  }  I_0+  \int_0^{t-t_0}	 	e^{-  \nu \, a }   N(t-a) \, da, 
\end{equation}
where 
$$
I_0=\int_0^{+\infty} i_0(a)da
$$ 
is the total number of infected at time $t_0$.

By using equations \eqref{3.1} and \eqref{3.2}, we obtain 
\begin{equation*}
	\CR'(t)=\nu f \left[   e^{-  \nu \, \left(t-t_0\right)  }  I_0+  \int_0^{t-t_0}	 	e^{-  \nu \, a }   N(t-a) da   \right], 
\end{equation*}
or equivalently (by using the change of variable $\sigma=t-a$)
\begin{equation*}
	\CR'(t)=\nu f \left[   e^{-  \nu \, \left(t-t_0\right)  }  I_0+  \int_{t_0}^{t}	 	e^{-  \nu \, \left(t-\sigma\right) }   N(\sigma) d\sigma   \right].
\end{equation*}
By choosing $t=t_0$ we obtain 
$$
I_0= \dfrac{ \CR'(t_0)}{\nu f},
$$
and
$$
\int_{t_0}^t e^{\nu \sigma} N( \sigma) d \sigma=     \dfrac{e^{  \nu \, t  }  \CR'(t)}{\nu f} - e^{  \nu \, t_0  } I_0, 
$$
and by differentiating  both sides of the above equation, we obtain 
$$
e^{\nu t} N(t)=  \dfrac{  \nu e^{  \nu \, t  } \CR'(t)+ e^{  \nu \, t  } \CR''(t) }{\nu f}  .
$$ 
Therefore we obtain the following connection between the data and the model. 
\begin{mybox}{Connection between the data and the model}
	Let $t \to \CR(t)$ be the cumulative number of reported cases. Then the initial number of infected is given by 
	\begin{equation} \label{3.3}
		I_0=\dfrac{ \CR'(t_0)}{\nu f},
	\end{equation}
	and the flow of new infected individuals $N(t)$ at time $t$ is given by 
	\begin{equation} \label{3.4}
		N(t)= \dfrac{\nu  \CR'(t) +  \CR''(t) }{\nu f},  \forall t \geq t_0. 
	\end{equation}
\end{mybox}

\section{Kermack-McKendrick model  starting from a single and multiple cohorts of infected patients}
\label{Section4}
The major difficulty to compare the model \eqref{2.4} with the data is to identify the functions $a \to i_0(a)$  and $a \to \beta(a)$. To simplify the discussion, let us consider the model at the early stage of the epidemic.  When the epidemic just starts we can assume that the transmission rate $t \to \tau(t) $ remains constant, and the number of susceptible individuals $t \to S(t)$ is constant and equal to $S_0$. Under such a simplifying assumption the Volterra equation \eqref{2.4} becomes 
\begin{equation} \label{4.1}
		N(t)=\tau\, S_0 \left[ \Lambda(t)+ \int_0^{t-t_0} 	\beta(a) 	e^{-  \nu \, a }   N(t-a) \, da\right], \forall t \geq t_0. 
\end{equation}

\subsection{Initial distribution for a single cohort of infected with age of infection $a=0$}
In order to understand the mathematical concept of Dirac mass centered at $0$, we first consider an approximation by an exponential law  
$$
i_0(a)= I_0  \,  \kappa \, e^{-\kappa a} ,
$$
mean and standard deviation  equal to $1/\kappa$. Then a Dirac mass centered at $0$ can be understood as the limit of such a distribution when $\kappa$ goes to  $+\infty$. The limit of needs some explanations. Recall that 
$$
\int_{a_1}^{a_2} i_0(a)da= I_0  \,  \left[e^{-\kappa a_1}-e^{-\kappa a_2} \right] , 
$$
is the initial number of infected individuals with infection age $a$ in between $a_1$ and $a_2$ at time $t=0$. 

We deduce that 
$$
\lim_{\kappa \to \infty} \int_{a_1}^{a_2} i_0(a)da =
\left\lbrace 
\begin{array}{lr}
	0, &\text{ if } a_2>a_1>0, \\
	I_0,& \text{ if } a_2>a_1=0.\\
\end{array}
\right.
$$
That is to say that,  when $\kappa$ tends to $ +\infty$, the initial distribution of population $i_0(a)$ is approaching the case where all the infected individuals at time $t_0$ have the same age of infection $a=0$.

For short, we write 
$$
i_0(a) = I_0  \, \delta_0(a),
$$  
where $ \delta_0(a)$ is called the  Dirac mass centered at $0$.


\subsection{Model starting  from a single cohort of infected with age of infection $a=0$}

Recall that 
\begin{equation*}  
	\Lambda(t)= e^{-  \nu \, \left( t-t_0\right)  } \int_{0}^{+ \infty} \beta\left(a+ \left(t-t_0\right) \right) \,	 i_0 \left(a  \right) da,
\end{equation*} 
so when $i_0(a)$ is replaced by $I_0 \times \kappa \times e^{-\kappa a}$ we obtain 
\begin{equation*} \label{4.2}  
	f_\kappa(t):= I_0  e^{-  \nu \, \left( t-t_0\right)  } \int_{0}^{+ \infty} \beta\left(a+ \left(t-t_0\right) \right) \times	 \kappa \times e^{-\kappa a}da.
\end{equation*} 
In order to derive the Kermack-McKendrick model  with Dirac mass initial distribution as limit, we first need the following result. 
\begin{lemma} \label{LE4.1} Let Assumption \ref{ASS2.1} be satisfied, and assume in addition that $a \to \beta(a)$ is continuous. Then we have 
\begin{equation*} \label{4.3}
	\lim_{\kappa \to \infty} f_\kappa(t)=  I_0  e^{-  \nu \, \left( t-t_0\right)  } \beta\left( t-t_0 \right), 
\end{equation*}
where the limit is uniform in $t \geq t_0$. That is 
\begin{equation*}  \label{4.4}
	\lim_{\kappa \to +\infty} \sup_{t \geq t_0} \vert f_\kappa(t)- I_0  e^{-  \nu \, \left( t-t_0\right)  } \beta\left( t-t_0 \right)\vert=0. 
\end{equation*}
\end{lemma}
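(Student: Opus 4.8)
The plan is to recognize $a \mapsto \kappa e^{-\kappa a}$ as an approximate identity on $[0,+\infty)$: it is a probability density there (it integrates to $1$ on $[0,+\infty)$) whose mass concentrates at $a=0$ as $\kappa \to +\infty$, since $\int_\delta^{+\infty}\kappa e^{-\kappa a}\,da = e^{-\kappa\delta} \to 0$ for every fixed $\delta>0$. Writing $h := t - t_0 \geq 0$ and using $\int_0^{+\infty}\kappa e^{-\kappa a}\,da = 1$ to insert the target value under the integral, I would rewrite the difference to be estimated as
\[
f_\kappa(t) - I_0\,e^{-\nu h}\,\beta(h) = I_0\,e^{-\nu h}\int_0^{+\infty}\bigl[\beta(a+h) - \beta(h)\bigr]\,\kappa e^{-\kappa a}\,da,
\]
so that, since $I_0 e^{-\nu h} \leq I_0$, it suffices to bound $\sup_{h\geq 0}\bigl|\int_0^{+\infty}[\beta(a+h)-\beta(h)]\,\kappa e^{-\kappa a}\,da\bigr|$ by a quantity independent of $t$ that vanishes as $\kappa\to+\infty$.

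Next I would split the integral at a threshold $\delta>0$ to be chosen. On the tail $\{a>\delta\}$, Assumption~\ref{ASS2.1}(ii) gives $|\beta(a+h)-\beta(h)|\leq 2$, so that contribution is at most $2\int_\delta^{+\infty}\kappa e^{-\kappa a}\,da = 2e^{-\kappa\delta}$, which tends to $0$ as $\kappa\to+\infty$ uniformly in $h$. On $[0,\delta]$, I would use the continuity of $\beta$: given $\varepsilon>0$, choose $\delta>0$ so that $|\beta(a+h)-\beta(h)|\leq\varepsilon$ whenever $0\leq a\leq\delta$; then that contribution is at most $\varepsilon\int_0^{+\infty}\kappa e^{-\kappa a}\,da = \varepsilon$. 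Combining the two estimates yields $\sup_{t\geq t_0}\bigl|f_\kappa(t) - I_0 e^{-\nu(t-t_0)}\beta(t-t_0)\bigr| \leq I_0\,(\varepsilon + 2e^{-\kappa\delta})$; letting $\kappa\to+\infty$ and then $\varepsilon\to 0$ completes the proof.

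The main obstacle is exactly the step on $[0,\delta]$: choosing a single $\delta$ that works for \emph{all} $h\geq 0$ simultaneously requires $\beta$ to be uniformly continuous on $[0,+\infty)$, not merely continuous (a bounded continuous $\beta$ that oscillates faster and faster at infinity can make the convergence non-uniform). Under the standing modeling hypothesis that $\beta$ vanishes for $a \geq a_+$ — equivalently the standard assumption \eqref{1.3} on $R_0(a)$, given that $\tau S_0 e^{-\nu a}>0$ — the function $\beta$ is continuous with compact support, hence uniformly continuous, and the argument above applies verbatim; the same holds whenever $\beta$ is uniformly continuous (for instance if $\beta$ has a limit at $+\infty$). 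I would therefore run the proof under that reading and indicate explicitly the one place where uniform continuity of $\beta$ is used.
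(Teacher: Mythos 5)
Your decomposition and your tail estimate are exactly the paper's: write
\[
f_\kappa(t)-I_0 e^{-\nu(t-t_0)}\beta(t-t_0)=I_0 e^{-\nu(t-t_0)}\int_0^{+\infty}\bigl[\beta(a+(t-t_0))-\beta(t-t_0)\bigr]\,\kappa e^{-\kappa a}\,da,
\]
split at a threshold, and kill the far part by $2\,I_0\sup_a\beta(a)\,e^{-\kappa\eta}\to 0$. You have also correctly diagnosed the only delicate point, namely that a single threshold working for all $t\geq t_0$ on the near part seems to require uniform continuity of $\beta$ on $[0,+\infty)$. But your resolution — adding the hypothesis that $\beta$ is uniformly continuous (e.g.\ compactly supported) — proves a weaker statement than the lemma, which assumes only that $\beta$ is continuous and bounded. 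That is the gap: you are missing the device that lets the paper get away with mere continuity.

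The paper's trick is to exploit the prefactor $e^{-\nu(t-t_0)}$ rather than the integral itself. Given $\varepsilon>0$, first choose $t_1>t_0$ so large that $I_0\,e^{-\nu(t-t_0)}\sup_{a\geq 0}\beta(a)\leq \varepsilon/4$ for all $t\geq t_1$; for such $t$ the \emph{entire} difference is at most $\varepsilon/2$ with no information about $\beta$ beyond boundedness, because both $f_\kappa(t)$ and the target are already that small. This confines the genuine work to the compact interval $t\in[t_0,t_1]$, on which continuity of $\beta$ does yield a uniform modulus of continuity, so a single $\eta$ can be chosen there; the tail $\int_\eta^{+\infty}$ is then handled exactly as you do. Note that this argument silently uses $\nu>0$ (the text only states $\nu\geq 0$); your observation that uniform convergence can fail for a bounded continuous $\beta$ oscillating ever faster at infinity is correct and shows the lemma would indeed be false in the degenerate case $\nu=0$ without your extra hypothesis. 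So the fix is not to strengthen the assumption on $\beta$, but to use the exponential damping in $t$ to localize the problem to a compact $t$-interval.
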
 
\begin{proof}
	Let $\varepsilon>0$. We observe that 
	\begin{equation*}   
		\begin{array}{ll}
f_\kappa(t)- I_0  e^{-  \nu \, \left( t-t_0\right)  } \beta\left( t-t_0 \right)&=  I_0 \,  e^{-  \nu \, \left( t-t_0\right)  } \displaystyle \int_{0}^{+ \infty} \left[\beta\left(a+ \left(t-t_0\right) \right) -\beta\left(t-t_0\right)\right]    	 \kappa \,  e^{-\kappa a}da, \vspace{0.2cm}\\
&=  I_0\,   e^{-  \nu \, \left( t-t_0\right)  } \displaystyle  \int_{0}^ \eta \left[\beta\left(a+ \left(t-t_0\right) \right) -\beta\left(t-t_0\right)\right]   	 \kappa \,  e^{-\kappa a}da, \vspace{0.2cm}\\
&\hspace{0.2cm}+ I_0\,  e^{-  \nu \, \left( t-t_0\right)  } \displaystyle  \int_{\eta}^{+ \infty}  \left[\beta\left(a+ \left(t-t_0\right) \right) -\beta\left(t-t_0\right)\right]     \kappa \,   e^{-\kappa a}da. 
		\end{array}
	\end{equation*}
Let $t_1>t_0$ be such that 
$$
I_0 \,  e^{-  \nu \, \left( t-t_0\right)  } \displaystyle  \sup_{a \geq 0} \beta(a) \leq \dfrac{\varepsilon}{4}, \forall t \geq t_1. 
$$
Let $\eta >0$ be such that 
$$
a \leq \eta \Rightarrow \vert \beta\left(a+ t\right)-\beta\left(t\right) \vert \leq\dfrac{\varepsilon}{2}, \forall t \in [t_0,t_1]. 
$$
Then we have 
\begin{equation*} 
\vert 	I_0 \,  e^{-  \nu \, \left( t-t_0\right)  } \int_{0}^ \eta \left[\beta\left(a+ \left(t-t_0\right) \right) -\beta\left(t-t_0\right)\right]   	 \kappa \,  e^{-\kappa a}da  \vert \leq \left\{ 
	\begin{array}{ll}
	I_0 \,  e^{-  \nu \, \left( t-t_0\right)  } \, 2 \, \displaystyle \sup_{a \geq 0} \beta(a),	& \text{ if } t \geq t_1, \vspace{0.2cm}\\
			I_0 \,  e^{-  \nu \, \left( t-t_0\right)  } \int_{0}^ \eta   \dfrac{\varepsilon}{2}	 \kappa \,  e^{-\kappa a}da 	& \text{ if } t \in [t_0, t_1],
	\end{array}
\right. 
\end{equation*}
therefore 
$$
\vert f_\kappa(t)- I_0 \,  e^{-  \nu \, \left( t-t_0\right)  } \beta\left( t-t_0 \right)\vert \leq \dfrac{\varepsilon}{2}+ \vert I_0 \,   e^{-  \nu \, \left( t-t_0\right)  } \int_{\eta}^{+ \infty}  \left[\beta\left(a+ \left(t-t_0\right) \right) -\beta\left(t-t_0\right)\right]     \kappa \,   e^{-\kappa a}da \vert .
$$
The result from the fact that 
$$
 \vert I_0 \,  e^{-  \nu \, \left( t-t_0\right)  } \int_{\eta}^{+ \infty}  \left[\beta\left(a+ \left(t-t_0\right) \right) -\beta\left(t-t_0\right)\right]     \kappa \,   e^{-\kappa a}da \vert \leq 2\, I_0  \,   \sup_{a \geq 0} \beta(a)  \int_{\eta}^{+ \infty}    \kappa \,   e^{-\kappa a}da
$$
hence 
	\begin{equation*}
 \vert I_0  e^{-  \nu \, \left( t-t_0\right)  } \int_{\eta}^{+ \infty}  \left[\beta\left(a+ \left(t-t_0\right) \right) -\beta\left(t-t_0\right)\right]     \kappa \,   e^{-\kappa a}da \vert \leq  2\, I_0    \sup_{a \geq 0} \beta(a)  e^{-\kappa \eta} \to 0, \text{ as } \kappa \to \infty. \qedhere
	\end{equation*}
\end{proof}

\medskip 
\noindent Define 
\begin{equation} \label{4.2}
	\Gamma(a)=  e^{-  \nu \, a } \beta\left(a\right), \forall a \geq 0. 
\end{equation}
Then by using \eqref{2.6}, the Kermack-McKendrick model can be reformulated for  $t  \geq t_0$,  as the following system
\begin{equation*}  \label{4.6}
		N_\kappa(t) =\tau(t)  S_\kappa(t) \left[ f_\kappa(t) +  \int_0^{t-t_0} 	\Gamma(a) \, N_\kappa(t-a) da \right],
\end{equation*}
where $f_\kappa(t)$ is defined in \eqref{4.2}, and
\begin{equation*} \label{4.6}
S_\kappa(t)=S_0-\int_{t_0}^t N_\kappa( \sigma) d \sigma. 
\end{equation*}
By taking first a formal limit when $\kappa \to +\infty$, we obtain the model starting from a single cohort of infected. 
\begin{mybox}{Kermack-McKendrick model  starting from a single cohort of infected}
	Assume that the initial distribution of infected only contains a single cohort composed of $I_0$ individuals all with age of infection $a=0$ at time $t_0$. Then the flow of new infected $t \to N(t)$ is the unique continuous solution of the Volterra integral equation 
\begin{equation}  \label{4.3}
	N(t) =\tau(t)  S(t) \left[ I_0 \times	\Gamma\left(t-t_0\right) +  \int_0^{t-t_0} 	\Gamma(a) \, N(t-a) da \right], \forall t \geq t_0, 
\end{equation}
where $S(t)$ is obtained from \eqref{2.4}.
\end{mybox}
The following theorem says that the model with a single cohort of infected extends the earlier model of Kermack-McKendrick with initial distribution in $L^1$. This theorem is a consequence of Lemma \ref{LE4.1} and the continuity of the semiflow generated by the Volterra integral equation. We refer to Ducrot and Magal \cite{DM} for more results on this topic. 
\begin{theorem} Let Assumption \ref{ASS2.1} be satisfied, and assume in addition that $a \to \beta(a)$ is continuous.  Then
	\begin{equation*}
		\lim_{\kappa \to \infty} N_\kappa (t)=N(t), 
	\end{equation*} 
where the limit is uniform in $t$ on every closed and bounded interval of $[t_0,+ \infty)$, and the map $t \to N(t)$ is the unique continuous solution of the Volterra integral equation \eqref{4.3}-\eqref{4.4}. 
\end{theorem}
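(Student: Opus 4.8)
The plan is to view $N$ and $N_\kappa$ as solutions of two nonlinear Volterra integral equations that coincide except for their forcing terms --- $I_0\,\Gamma(t-t_0)$ for \eqref{4.3} and $f_\kappa(t)$ for the $\kappa$-equation --- and to prove a continuous-dependence estimate with respect to that forcing, so that Lemma~\ref{LE4.1} immediately yields the conclusion. First I would record the well-posedness of \eqref{4.3}: since $S(t)=S_0-\int_{t_0}^t N$ depends affinely on $N$ while $\Gamma$ and the forcing are bounded, the right-hand side of \eqref{4.3} is locally Lipschitz on $C([t_0,t_0+\delta])$ for $\delta$ small, so the Banach fixed point theorem produces a unique continuous local solution, which extends to all of $[t_0,+\infty)$ thanks to the a priori bound obtained below; this is precisely the continuity of the Volterra semiflow invoked in the statement, and we refer to \cite{DM} for the details. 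The same argument applies to the $\kappa$-equation for each fixed $\kappa$, producing $N_\kappa\in C([t_0,+\infty))$.

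Second, I would establish a bound on $N_\kappa$ on a fixed interval $[t_0,T]$ that does not depend on $\kappa$. Write $\bar\tau=\sup_{t\ge t_0}\tau(t)<\infty$ (Assumption~\ref{ASS2.1}(i)). Since $N_\kappa\ge 0$ one has $0\le S_\kappa(t)\le S_0$; moreover $0\le\Gamma(a)=e^{-\nu a}\beta(a)\le 1$ and $0\le f_\kappa(t)\le I_0\sup_a\beta(a)\le I_0$ by Assumption~\ref{ASS2.1}(ii) and the definition of $f_\kappa$. Hence $N_\kappa(t)\le \bar\tau S_0\big(I_0+\int_{t_0}^t N_\kappa(\sigma)\,d\sigma\big)$, and Gronwall's lemma yields $N_\kappa(t)\le M_T:=\bar\tau S_0 I_0\,e^{\bar\tau S_0 (T-t_0)}$ on $[t_0,T]$, with $M_T$ independent of $\kappa$; the identical estimate bounds $N$.

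Third comes the convergence. Set $u_\kappa=N_\kappa-N$ and $\varepsilon_\kappa:=\sup_{t\ge t_0}|f_\kappa(t)-I_0\,\Gamma(t-t_0)|$, which tends to $0$ as $\kappa\to\infty$ by Lemma~\ref{LE4.1}. Subtracting \eqref{4.3} from the $\kappa$-equation and splitting the difference into the forcing discrepancy $\tau(t)S_\kappa(t)\big(f_\kappa(t)-I_0\Gamma(t-t_0)\big)$, the susceptible term $\tau(t)\big(S_\kappa(t)-S(t)\big)\big(I_0\Gamma(t-t_0)+\int_0^{t-t_0}\Gamma(a)N(t-a)\,da\big)$, and the convolution term $\tau(t)S_\kappa(t)\int_0^{t-t_0}\Gamma(a)\,u_\kappa(t-a)\,da$, then using $S_\kappa-S=-\int_{t_0}^t u_\kappa$, $|S_\kappa|\le S_0$, $0\le\Gamma\le1$, the bound $M_T$, and the change of variable $\int_0^{t-t_0}|u_\kappa(t-a)|\,da=\int_{t_0}^t|u_\kappa(\sigma)|\,d\sigma$, one reaches
\[
|u_\kappa(t)|\le \bar\tau S_0\,\varepsilon_\kappa+C_T\int_{t_0}^t|u_\kappa(\sigma)|\,d\sigma,\qquad t\in[t_0,T],
\]
with $C_T$ independent of $\kappa$. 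Gronwall's inequality then gives $\sup_{[t_0,T]}|u_\kappa|\le \bar\tau S_0\,\varepsilon_\kappa\,e^{C_T(T-t_0)}\to 0$, which is exactly the asserted uniform convergence on every compact subinterval of $[t_0,+\infty)$.

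The only point that really needs care --- and the main potential obstacle --- is keeping \emph{every} constant ($M_T$, $C_T$, and especially $\varepsilon_\kappa$) independent of $\kappa$: this is where the uniformity over all $t\ge t_0$ in Lemma~\ref{LE4.1} is indispensable, since it both makes $\varepsilon_\kappa$ a genuine vanishing quantity and bounds the forcings $f_\kappa$ uniformly. The coupling through $S_\kappa-S$ in the susceptible term reintroduces $\int_{t_0}^t|u_\kappa|$, but this is harmless --- it merely enlarges $C_T$ in the same way the convolution term does, and is absorbed by Gronwall. As an alternative to Steps one through three, once well-posedness and the uniform a priori bound are available the whole statement follows from the continuous dependence of the Volterra semiflow on its data proved in Ducrot--Magal \cite{DM}, with the convergence $f_\kappa\to I_0\,\Gamma(\cdot-t_0)$ of the forcings playing the role of convergence of the initial datum $\Lambda$ in \eqref{4.1}.
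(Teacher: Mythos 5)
Your proposal is correct and follows essentially the same route as the paper, which gives no detailed argument but states that the theorem is a consequence of Lemma~\ref{LE4.1} together with the continuity of the semiflow generated by the Volterra integral equation (citing Ducrot--Magal \cite{DM}). Your Gronwall-based continuous-dependence estimate with respect to the forcing term is precisely an explicit implementation of that continuity statement, so it supplies the details the paper leaves to the reference.
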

\begin{remark} When the initial distribution is a Dirac mass centered at $a=0$,  the total number of infected individuals at time $t$ is 
$$
C(t)=e^{-  \nu \, (t-t_0)}  I_0 +\int_{0}^{t-t_0} 	e^{-  \nu \, a }  N(t-a)da, \forall t \geq t_0,
$$
and the number of infectious individuals  at time $t$ is
$$
I(t)=\beta\left( t-t_0\right)  e^{-  \nu \, \left( t-t_0\right)}  I_0+ \int_{0}^{t-t_0}  \beta(a) e^{-  \nu \, a }  N(t-a) da,  \forall t \geq t_0.
$$	
\end{remark} 
\begin{mybox}{Kermack-McKendrick model  starting from multiple cohorts of infected}
Assume that the initial distribution of infected consists in $n\geq 1 $ cohorts of infected with age of infection $a_1 <a_2 < \ldots < a_n$ at time $t_0$. That is 
$$
i_0(a) =I_0^1  \, \delta_{a_1}(a)+ \ldots+ I_0^n  \, \delta_{a_n}(a). 
$$  
where $I_0^j$ is the number of infected in the $j^{th}$-cohort at time $t_0$.
    
\medskip  
Then the flow of infected  $t \to N(t)$ satisfies the following Volterra integral equation 
\begin{equation*}  
	\begin{array}{l}
		N(t) = \tau(t) S(t) \left[  \displaystyle \sum_{j=1}^n   \Gamma(t-t_0+a_j) \dfrac{I_0^j }{e^{-\nu a_j}}  + \displaystyle \int_0^{t-t_0} 	\Gamma(a) \, N(t-a) da  \right],
	\end{array}
\end{equation*}
where $S(t)$ is obtained from \eqref{2.4}.
\end{mybox}

\subsection{Basic reproduction number for the extended model \eqref{4.3}}
In this section, we 	assume that the transmission $t \to \tau(t) $ is constant equal to $\tau$, and $t \to S(t)$ is constant equal to $S_0$. 

\medskip 
Define  the  \textbf{daily reproduction numbers}
\begin{equation} \label{4.5}
	R_0(a)=\tau \times S_0 \times  \Gamma(a)= \tau \times S_0 \times  \beta\left(a\right)  \times  e^{-  \nu \, a }, \forall a \geq 0. 
\end{equation}
Assuming that the number of susceptible individuals $t \to S(t)$ is constant and equal $S_0$ in the $N$-equation \eqref{4.3}, then we obtain  
\begin{equation}\label{4.6}
	N(t) =   I_0 \times R_0(t-t_0) +  \int_0^{t-t_0} 	R_0(a) \, N(t-a) da ,  \hspace{0.2cm} \forall t \geq t_0. 
\end{equation}
By using the change of variable $s=t-t_0$, 
\begin{equation*} 
	N(s+t_0) = I_0 \times R_0(s) +  \int_0^{s} 	R_0(a) \, N(s+t_0-a) da ,  \hspace{0.2cm}  \forall s \geq 0. 
\end{equation*}
 Replacing the notation $s$ by $t$, and define 
$$
N_{t_0}(t)=N(t+t_0), \forall t \geq 0, 
$$
the equation \eqref{4.6} becomes 
\begin{equation} \label{4.7}
	N_{t_0}(t) = \left[  I_0 \times R_0(t) +  \int_0^{t} 	R_0(a) \, N_{t_0}(t-a) da \right], \forall t \geq 0. 
\end{equation}
By replacing $N_{t_0}(t)$ by the right hand side of \eqref{4.7} in the integral term of \eqref{4.7} we obtain 
\begin{equation*} 
	N_{t_0}(t)=I_0 \times R_0(t) +  \int_0^t 	R_0(a) \, I_0 \times R_0(t-a) da+  \int_0^t 	R_0(a_0)\int_0^{t-a_0} 	R_0(a_1) \, N_{t_0}(t-a_0-a_1) da_1 da_0
\end{equation*}
and by induction 
\begin{equation*} 
	N_{t_0}(t)=I_0  R_0(t) + I_0   (R_0*R_0)(t)+ I_0  (R_0*R_0*R_0)(t)+\ldots+  I_0 \underbrace{ (R_0*R_0* \ldots*R_0)}_{\text{n times}} (t)+ \ldots
\end{equation*}
where the convolution is defined by  
\begin{equation*}  
	(U*V)(t)=\int_0^t U(a)V(t-a)da=\int_0^t U(t-a)V(a)da.
\end{equation*}
We define 
\begin{equation*} 
	\left( U^{*(2)} \right) (t)=(U*U)(t),
\end{equation*}
and for each integer $n \geq 3$,
\begin{equation*}
	\left(U^{*(n)} \right)(t)=\left(U^{*(n-1)} *U \right)(t)=\left(U*U^{*(n-1)} \right)(t)=\underbrace{ (U*U* \ldots*U)}_{\text{n times}}(t).
\end{equation*}
We can interpret the $N$-equation \eqref{4.6} concretely as follows 
\begin{equation*}
	\begin{array}{ll}
		N_{t_0}(t)=& \underbrace{ I_0 \, R_0(t)}_{	\parbox{300pt}{\scriptsize\centering Flow of infected produced at time $t$ by the first $I_0$ infected individuals}}\vspace{0.2cm}\\
		&+ \underbrace{I_0   \,\left( R_0^{*(2)} \right) (t)}_{ \parbox{300pt}{\scriptsize\centering Flow of infected produced at time $t$ by the second generation of infected individuals}} \vspace{0.2cm}\\
		&+ \underbrace{I_0  \, \left( R_0^{*(3)} \right)(t)}_{	\parbox{300pt}{\scriptsize\centering Flow of infected produced at time $t$ by the third generation of infected individuals}}  \vspace{0.2cm}\\
		&+\\
		&\vdots\\
		&+ \underbrace{I_0 \, \left( R_0^{*(n)} \right)(t)}_{\parbox{300pt}{\scriptsize\centering Flow of infected produced at time $t$ by the $n^{th}$ generation of infected individuals}} \vspace{0.2cm}\\
		&+\\
		&\vdots
	\end{array}	
\end{equation*}
\begin{mybox}{Basic reproduction number}
The total number of the first generation of new infected produced by a single infected patient with age of infection $a=0$ at time $t=t_0$ is called the \textbf{basic reproduction number}. That is  
$$
R_0= \int_{0}^{\infty} R_0(a)\, da. 
$$
The flow of the first generation  of new infected produced  by a single infected patient have been infected for $a$ days  is called the  \textbf{daily reproduction numbers}. When the time scale is one day, the function $R_0(a)$ is also the average daily number of case produced by a single patient at the age of infection $a$.   
\end{mybox}

\begin{proposition}
	The total number of cases produced by the $n^{th}$ generation of infected resulting from a single infected patient is 
	\begin{equation*}
		\int_{0}^{\infty}  \left(R_0^{*(n)} \right)(t)dt=\left(R_0 \right)^n.
	\end{equation*} 
\end{proposition}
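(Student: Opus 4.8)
The plan is to argue by induction on $n$, the engine being the elementary fact that the total mass of a convolution of two non-negative functions on $[0,\infty)$ factors as the product of the two total masses.

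First, the base case $n=1$ requires nothing: $\int_0^\infty \left(R_0^{*(1)}\right)(t)\,dt = \int_0^\infty R_0(t)\,dt = R_0$ by the very definition of $R_0$. For the inductive step I would assume the identity at rank $n-1$ and use $R_0^{*(n)} = R_0^{*(n-1)}*R_0$. Writing $U=R_0^{*(n-1)}$ and $V=R_0$ for brevity, the quantity to evaluate is
\begin{equation*}
\int_0^\infty (U*V)(t)\,dt = \int_0^\infty \int_0^t U(a)\,V(t-a)\,da\,dt.
\end{equation*}
Since $R_0(a)\ge 0$, every convolution power is non-negative, so Tonelli's theorem lets me exchange the order of integration over the region $\{\,0\le a\le t\,\}$; after the substitution $s=t-a$ in the inner integral this yields
\begin{equation*}
\int_0^\infty U(a)\left(\int_0^\infty V(s)\,ds\right)da = \left(\int_0^\infty U(a)\,da\right)\left(\int_0^\infty V(s)\,ds\right).
\end{equation*}
By the induction hypothesis the first factor equals $(R_0)^{n-1}$ and by definition the second equals $R_0$, so the whole expression is $(R_0)^{n-1}\cdot R_0=(R_0)^n$, which closes the induction.

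I do not expect a genuine obstacle here; the only point that needs care is the bookkeeping of the domain of integration when swapping the two integrals. A couple of remarks on rigor: because all integrands are non-negative, Tonelli applies with no a priori integrability hypothesis, so the identity is valid in $[0,+\infty]$ and in particular shows that $R_0^{*(n)}$ is integrable as soon as $R_0$ is. In the regime considered here $R_0(a)=\tau S_0\beta(a)e^{-\nu a}$ with $\beta$ bounded, so whenever $\nu>0$ — or under the compact support assumption \eqref{1.3} — one has $R_0<\infty$ and every quantity above is finite. Alternatively, one could give a one-line argument via the Laplace transform, using $\widehat{R_0^{*(n)}}(\lambda)=\widehat{R_0}(\lambda)^n$ and evaluating at $\lambda=0$; this formally requires the same finiteness, so the direct computation above is the cleaner route.
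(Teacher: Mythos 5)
Your proof is correct and follows essentially the same route as the paper's: induction on $n$ combined with the identity $\int_0^\infty (U*V)(t)\,dt = \int_0^\infty U\,\int_0^\infty V$, obtained by exchanging the order of integration and substituting in the inner integral. Your invocation of Tonelli rather than Fubini to justify the exchange for non-negative integrands is a slight sharpening of the paper's rigor, but the argument is the same.
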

\begin{proof} By using Fubini's theorem we have 
	\begin{equation*}
		\int_{0}^{\infty}	(R_0*V)(t)\, dt	=\int_{0}^{\infty}  \int_0^t R_0(t-a)V(a)\, da \, dt=	\int_{0}^{\infty}  \int_a^{+ \infty}  R_0(t-a)\,dt \,V(a)\, da 
	\end{equation*}
	and by making the change of variable $l=t-a$ we obtain  
	\begin{equation*}  
		\int_{0}^{\infty}	\left(R_0*V \right) (t)\, dt=	\int_0^{+ \infty} R_0(l)\,dl   \times \int_0^{+ \infty} V(a)\, da.
	\end{equation*}
	Replacing $V(t)$ by $(R_0^{*(n-1)})(t)$ in the above equation we obtain 
	\begin{equation*}
		\int_{0}^{\infty}  \left( R_0^{*(n)} \right) (t)dt=	\int_0^{+ \infty} R_0(l)\,dl  \int_{0}^{\infty}  \left( R_0^{*(n-1)} \right) (t)dt,
	\end{equation*}
	and the result follows by induction. 
\end{proof}

\section{Computing the age dependent reproduction number $	\Gamma(a)$  from the data}
\label{Section5}
By using \eqref{4.3}, we obtain the following result.
\begin{mybox}{Computing  $	\Gamma(a)$  from the data}
Assume in addition that the parameters $t_0, S_0>0, I_0, \nu > 0,$ and the function $t \to \tau(t)$ are given. Then the function $t \to \Gamma(t)$ can be obtained from  the flow of new infected  $t \to N(t)$,  as the unique solution of the Volterra integral equation 
	\begin{equation}\label{5.1}
		\Gamma(t-t_0)=\dfrac{ 1}{I_0}\left( \dfrac{N(t)}{\tau(t) S(t)} -   \int_0^{t-t_0} 	\Gamma(a) \, N(t-a) da \right), \forall  t \geq t_0,  
	\end{equation}
where $S(t)$ is obtained by using \eqref{2.4}.
\end{mybox}
\begin{remark} 
	Assume that patients can not transmit the pathogen when the age of infection is above $a^+>0$. That is  
	 $$
	 	\Gamma(a)=0, \forall  a \geq a^+.
	 $$
Then the equation \eqref{5.1} becomes for all $t \geq t_0+a^+$,
	 $$
	 \dfrac{N(t)}{ \tau(t) S(t)} = \int_0^{a^+} 	\Gamma(a) \, N(t-a) da \Leftrightarrow N(t)= \tau(t) S(t) \int_0^{a^+} 	\Gamma(a) \, N(t-a) da. 
	 $$
	\end{remark}
\subsection{Some explicit examples of $ a \to R_0(a)$}
We assume that the transmission $t \to \tau(t) $ is constant equal to $\tau$, and $t \to S(t)$ is constant equal to $S_0$.  Then 
$$
R_0(a)= \tau \times S_0 \times  \Gamma(t-t_0),
$$
and by setting  $ a=t-t_0$, the equation  \eqref{5.1} becomes 
	\begin{equation}\label{5.5}
R_0(a)=   \dfrac{N(a+t_0)}{I_0}  - \dfrac{1}{I_0}  \int_0^{a} R_0(\sigma) \, N(a- \sigma+t_0)  d \sigma, \forall a \geq 0. 
\end{equation}
 
\subsubsection{Exponential decay}
Assume that 
\begin{equation*}
 N(t)=\chi_1 \, I_0 \, e^{-\chi_2 \left(t-t_0\right) }. 
\end{equation*}
Then we obtain 
	\begin{equation*} 
R_0(a)= \chi_1 \,  e^{-\chi_2 a } -  \int_0^{a} R_0(\sigma) \, \chi_1 \,e^{-\chi_2 \left(a-\sigma \right) }   d \sigma, \forall  a \geq 0. 
\end{equation*}
Therefore 
\begin{equation*}
		R_0'(a)=- \left( \chi_1 + \chi_2   \right) R_0(t), \forall a \geq 0, \text{ and } R_0(0)= \chi_1 ,
\end{equation*}
and we obtain 
\begin{equation}
R_0(a)=\chi_1  e^{- \left( \chi_1  + \chi_2   \right) a} , \forall a \geq 0. 
\end{equation}

\subsubsection{Exponential decay with latency}
Let $t_1 >t_0$. Assume that 
\begin{equation*}
	N(t)= \left\{ 
	\begin{array}{l}
		I_0 \, \chi_1 \, e^{-\chi_2 \left(t-t_0\right) }, \text{ if } t \geq t_1, \vspace{0.2cm}\\
		0,  \text{ if } t_0 \leq t < t_1.
	\end{array}
\right.
\end{equation*}
Then we obtain 
\begin{equation*}
	R_0(a)= 0,  \forall a \in \left[0,t_1-t_0 \right], 
\end{equation*}
and 
\begin{equation*}
R_0(a)=\chi_1 \,  e^{-\chi_2 a } -  \int_{t_1-t_0}^{a}  R_0(\sigma)  \, \chi_1 \, e^{-\chi_2 \left(a -\sigma \right) } \, 	 d \sigma , \forall  t \geq t_1.    
\end{equation*}
By setting $a_1=t_1-t_0$, we obtain 
\begin{equation*}
R_0(a)= 0,  \forall  a \in \left[0,a_1 \right), 
\end{equation*}
\begin{equation*}
R_0(a)=  \chi_1 \,  e^{-\chi_2a  }  -  \int_{a_1}^{a}  R_0(\sigma) \, \chi_1 \, e^{-\chi_2 \left(a -\sigma \right) }   d \sigma , \forall  a \geq a_1,    
\end{equation*}
hence by setting $a=\widehat{a}+a_1$,
\begin{equation*}
R_0(\widehat{a}+a_1)=  \chi_1 \,  e^{-\chi_2 \left( \widehat{a}+a_1\right) } -  \int_{a_1}^{\widehat{a}+a_1} \chi_1 \, e^{-\chi_2 \left(\widehat{a}+a_1-\sigma \right) } \, 	R_0(\sigma)  d \sigma , \forall  \widehat{a} \geq 0, 
\end{equation*}
and by setting $\widehat{a}=a-a_1$, and $ \widehat{R}_0(\widehat{a})=R_0(\widehat{a}+a_1)$, we obtain 
\begin{equation*}
\widehat{R}_0(\widehat{a})=  C_1  e^{-\chi_2 \widehat{a}  }-  \int_{0}^{\widehat{a}} \chi_1 \, e^{-\chi_2 \left(\widehat{a}-\sigma \right) } \, 	\widehat{R}_0(\sigma)  d \sigma ,  \forall  \widehat{a} \geq 0,  
\end{equation*}
where 
$$
C_1= \chi_1 \,  e^{-\chi_2 \, a_1  }  .
$$
Thus 
\begin{equation*}
	\widehat{R}_0(\widehat{a})= C_1 e^{-\left( \chi_1+ \chi_2 \right) \widehat{a} },  \forall  \widehat{a} \geq 0.  
\end{equation*}
We conclude that 
\begin{equation}
R_0(a)= \left\{ 
	\begin{array}{ll}
	0, &\text{ if } 0 \leq a \leq a_1 , \vspace{0.2cm}\\
	\chi_3   e^{- \left( \chi_1+ \chi_2 \right) \, a  } , & \text{ if }  a > a_1,
	\end{array}
	\right.
\end{equation}
where 
$$
\chi_3= \chi_1 \,  e^{\chi_1 \, a_1  }     , \text{ and }   a_1=t_1-t_0. 
$$ 

\section{Day by day Kermack-McKendrick model with age of infection} 
\label{Section6}
The variation of the number of susceptible individuals $S(t)$ is given each day $ t=t_0, t_0+1 ,\ldots $, by 
\begin{equation} \label{6.1}
	S(t)=S_0-\sum_{d=t_0}^{t-1} N(d) , 
\end{equation}
where $S_0$ is the number of susceptible on day $0$, $S(t)$ is the number of susceptible on day $t$ and $N(d)$ is the daily number of new infected individuals on day $d$.
By analogy with the equation \eqref{2.6}, the daily number of new infected individuals satisfies the following discrete time Volterra integral equation for all $\forall t=t_0, t_0+1, t_0+2 ,\ldots,$

\begin{equation} \label{6.2}
	N(t)=\tau(t) S(t)  \sum_{d=t-t_0}^{+\infty}  \Gamma(d) \dfrac{  I_0(d-(t-t_0)) }{e^{-  \nu \, (d-(t-t_0))} } +\tau(t) S(t) \sum_{d=1}^{t-t_0} \Gamma(d) \times  N(t-d), 
\end{equation}  
where 
\begin{equation*}  
	\Gamma(d):=  \beta(d) \,  e^{-  \nu \, d} , \forall d=0, 1, 2, \ldots,
\end{equation*}
and $\tau(t)$ is the transmission rate, $\beta(d)$ is the probability to be infectious (i.e. capable to transmit the pathogen) after $d$ days of infection and $e^{-  \nu \, d}  $ is the probability to stay infected after  $d$ days of infection (i.e. the probability not to recover or to die after $d$ days of infection).  The quantities $I_0(d)$ is the number of infected on day $0$ which have being infected $d$ days ago.

\medskip 
The model \eqref{4.3} with a single cohort of infected becomes 
\begin{equation} \label{6.3}
	N(t)= \tau(t) S(t) \left[   \Gamma(t-t_0) I_0(0)+  \sum_{d=1}^{t-t_0} \Gamma(d) \times  N(t-d) \right].
\end{equation}  

\begin{mybox}{Day by day single cohort model and daily basic reproduction number}
	Assume that $t \to \tau(t)$ equal $\tau_0$, and $t \to S(t)$ is constant equal to $S_0$. Assume that the epidemic starts at time $t_0$ with a cohort of $I_0$ new infected patient (i.e. with age of infection $a=0$).  The model \eqref{4.6} with a single cohort of infected becomes  a discrete Volterra equation
	\begin{equation} \label{6.4}
		N(t)= \left[  R_0(t-t_0) \times I_0+  \sum_{d=1}^{t-t_0}R_0(d) \times  N(t-d) \right], \forall t \geq t_0.
	\end{equation}  
We obtain the day by day equation for the daily reproduction number  
\begin{equation} \label{6.5}
R_0(a) = \dfrac{N(t_0+a)}{ I_0}-  \dfrac{1 }{I_0}\sum_{d=1}^{a} R_0(d) \times  N(t_0+a-d), \forall a \geq 0.
\end{equation}  
\end{mybox}

\section{Numerical simulations}
\label{Section7}
\subsection{Comparison of deterministic and stochastic simulations}
In  the simulations, the unit of time is one day, and we fix 
$$
S_0=10^7=10\, 000 \, 000, \; 1/\nu=9 \text{ days}, \text{ and } R_0=1.1.
$$
For each function $\beta(a)$ described below, the parameter $\tau$ is obtained numerically by using the following formula 
$$
\tau=\dfrac{R_0}{S_0 \int_0^\infty \beta(a) e^{-\nu a}da }, 
$$
where the integral is computed by using the Simpson integration method. 

\medskip 
 In the following, we use the numerical scheme described in Appendix \ref{AppendixA} to run the simulation of the Volterra integral equation  \eqref{4.3}-\eqref{4.4}. We use the Individual Based Model (IBM) described in  Appendix \ref{AppendixB} to run the stochastic simulations of the model. 
  In the following, we illustrate the convergence of the IBM to the deterministic model whenever $I_0$ increases.

\subsubsection{Example 1}
We assume that the probability to be infectious  is a  shifted gamma like distribution. That is 
\begin{equation}\label{7.2}
\beta(a)=\beta_0 (a-a_0)^+ e^{- \beta_1 (a-a_0)^+ },
\end{equation}
with 
$$
a_0=3\text{ days}, \beta_0=e/2=1.3591, \text{ and }\beta_1=1/2=0.5.
$$ 
  \begin{figure}[H]
	\begin{center}
		\includegraphics[scale=0.15]{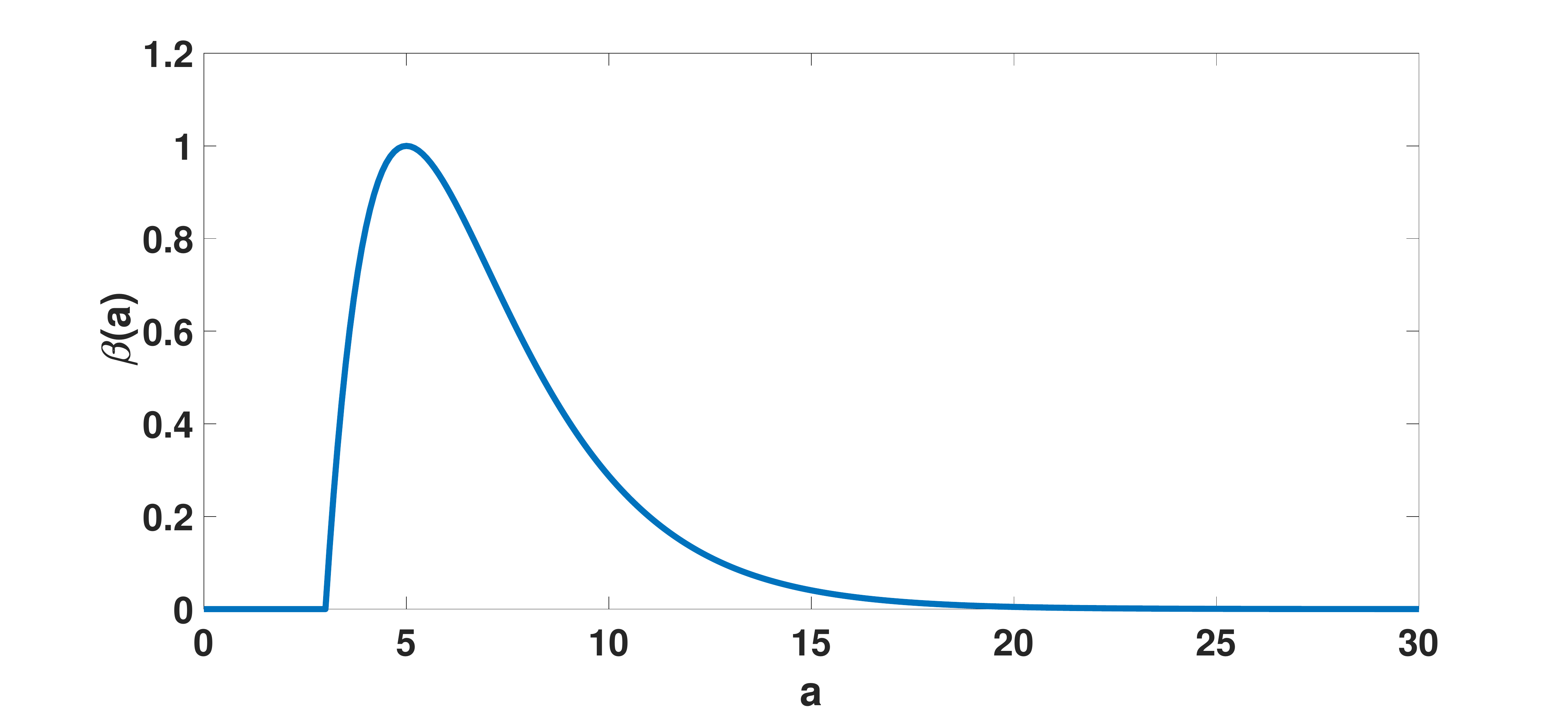}
		\includegraphics[scale=0.15]{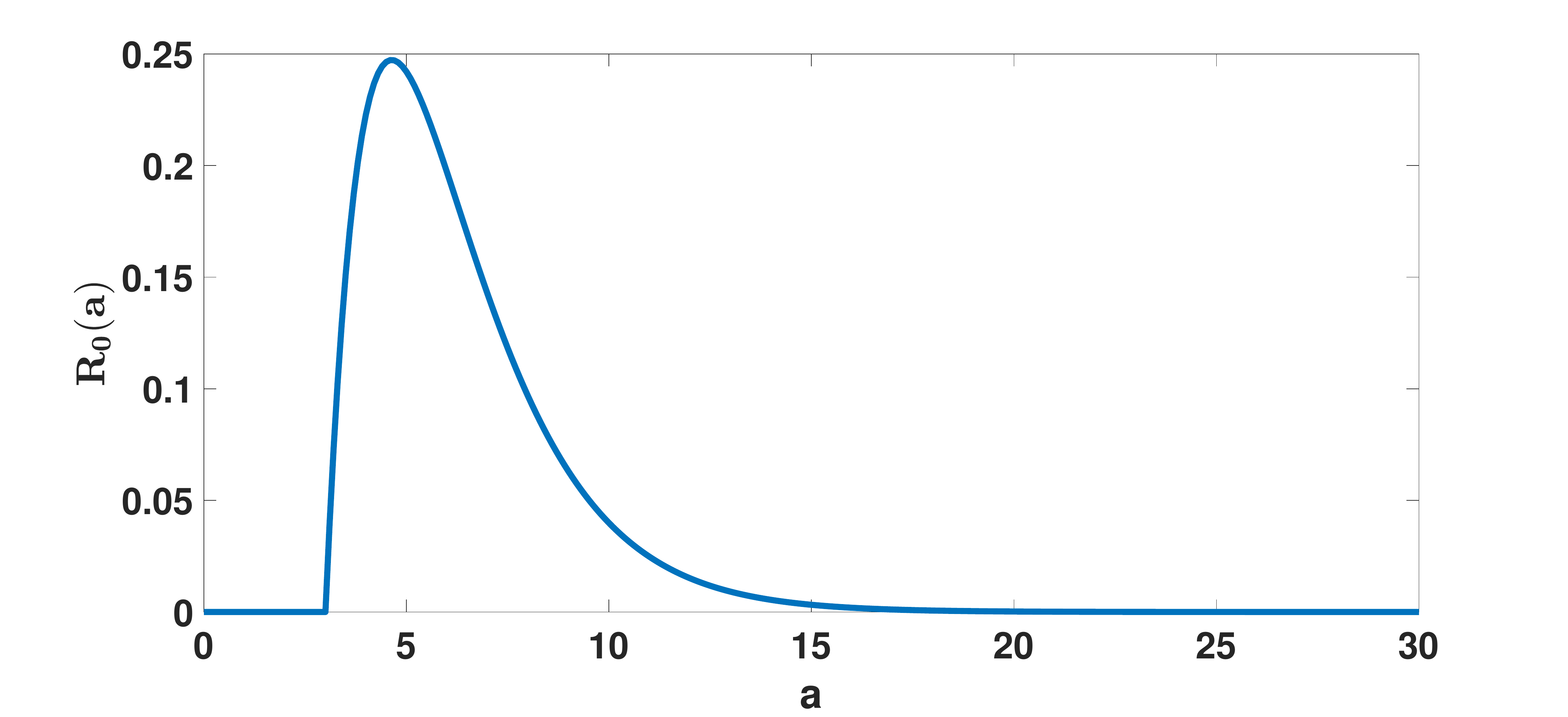}
	\end{center}
	\caption{\textit{On the left-hand side,  we plot the function $a \to \beta(a)$. On the right-hand side, we plot the function $a \to R_0(a)=\tau_0 \times S_0  \times \beta(a) \times  e^{-\nu a}$.}}\label{Fig14}
\end{figure}
In the Figures \ref{Fig15}-\ref{Fig16} we use the IBM  to investigate some properties of the clusters obtained from the stochastic simulations. We compare such a stochastic sample with the original  $a \to R_0(a)$. 
		
		By comparing Figures \ref{Fig17}-\ref{Fig18} (for $I_0=10$) and   Figures \ref{Fig22}-\ref{Fig23} (for $I_0=1\, 000$), we observe the convergence of the IBM to the deterministic model.  
		
		Then in Figures \ref{Fig19}-\ref{Fig21} (for $I_0=10$) and   Figures \ref{Fig24}-\ref{Fig26} (for $I_0=1\,000$), we apply the discrete-time equation \eqref{6.5} to reconstruct $a \to R_0(a)$ from the trajectories of the deterministic or stochastic models. In the deterministic model, we observe the effect of the day-by-day discretization (which corresponds to the daily reported data). In the stochastic case, we observe the effect of the stochasticity of the IBM. 
		
\medskip 
\noindent \textbf{First generation of secondary cases produced by a single infected:}
\begin{figure}[H]
	\centering
	\begin{minipage}{0.48\textwidth}
		\centering
		\textbf{(a)} \\
		\includegraphics[scale=0.15]{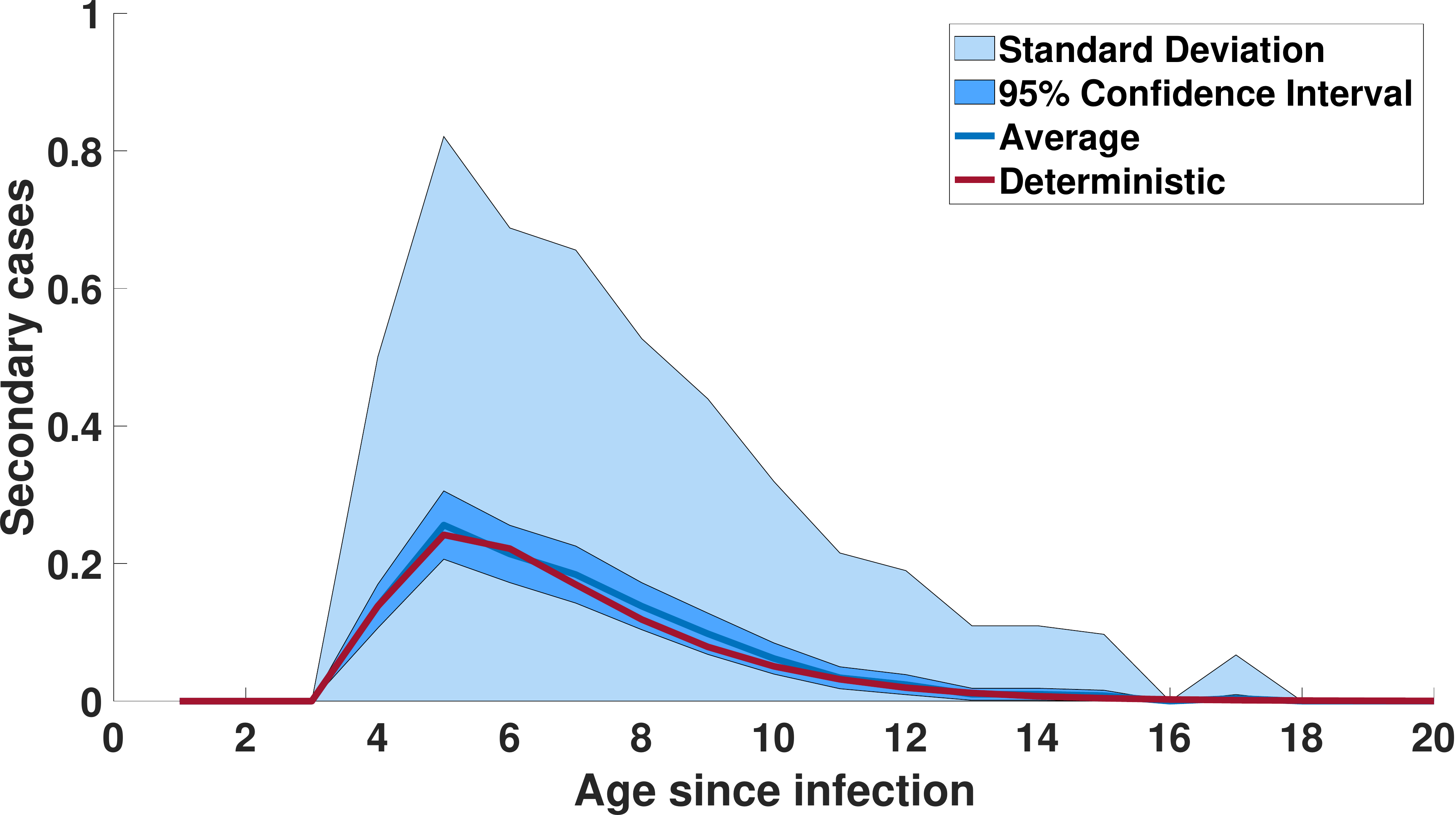}
	\end{minipage} 
	\begin{minipage}{0.48\textwidth}
		\centering
		\textbf{(b)} \\
		\includegraphics[scale=0.15]{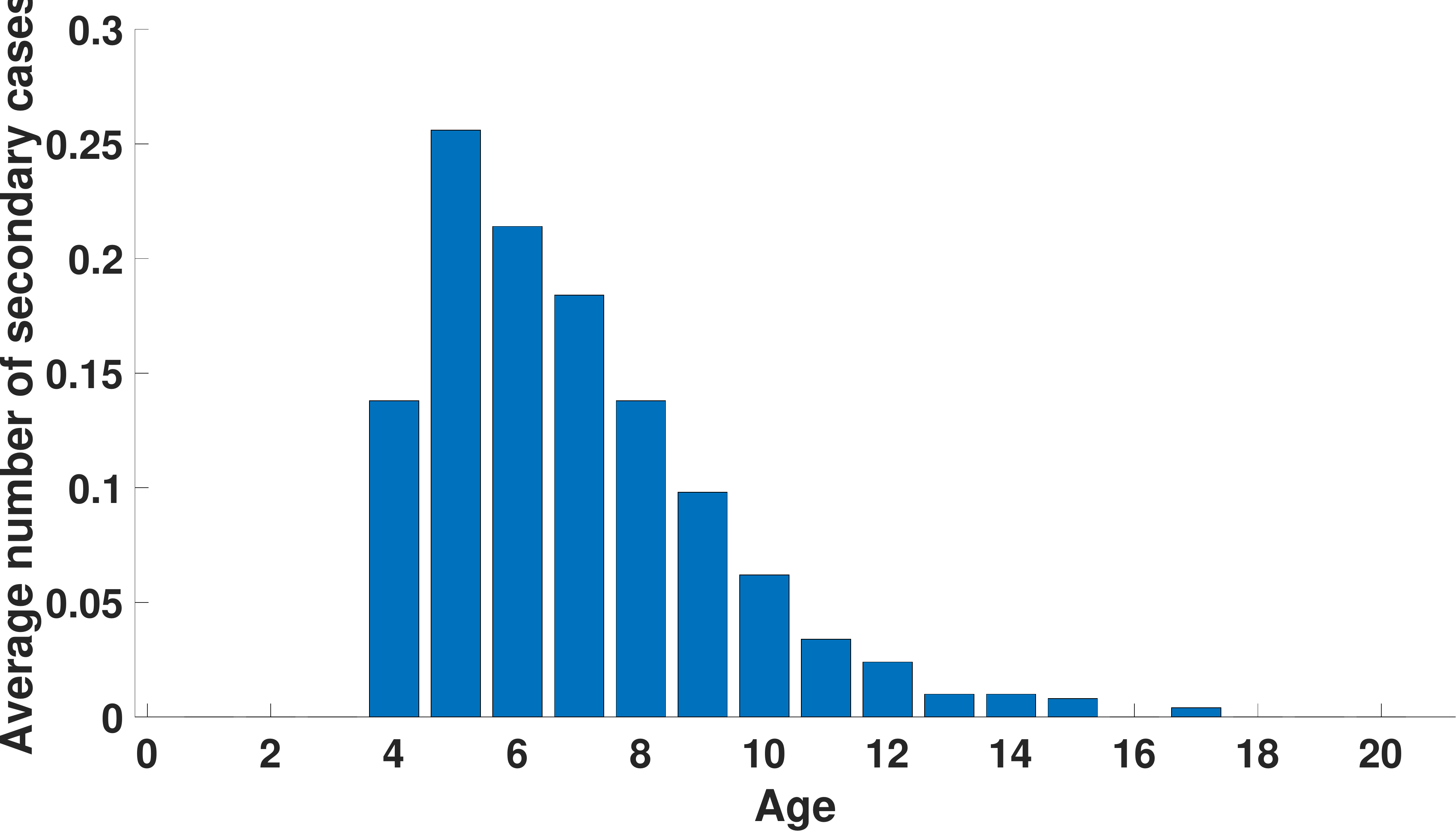}
	\end{minipage} 
	\caption{\textit{In these figures, we present  sets of 500 samples of secondary cases produced by a single infected individual in a population of $S=10^7$ susceptible hosts. Theses samples are produced by using the IBM. \textbf{(a)} Statistical summary: the blue curve represents the average number of cases at age of infection $a$; the dark blue area is the 95\% confidence interval of this average obtained by fitting a Gaussian distribution to the data; the light blue area corresponds to the standard deviation; the orange curve is the deterministic daily basic reproductive number at age $a$. \textbf{(b)} Bar graph of the average number of secondary cases as a function of the age since infection.}} \label{Fig15}
\end{figure}
\begin{figure}[H]
	\begin{center}
		\includegraphics[scale=0.15]{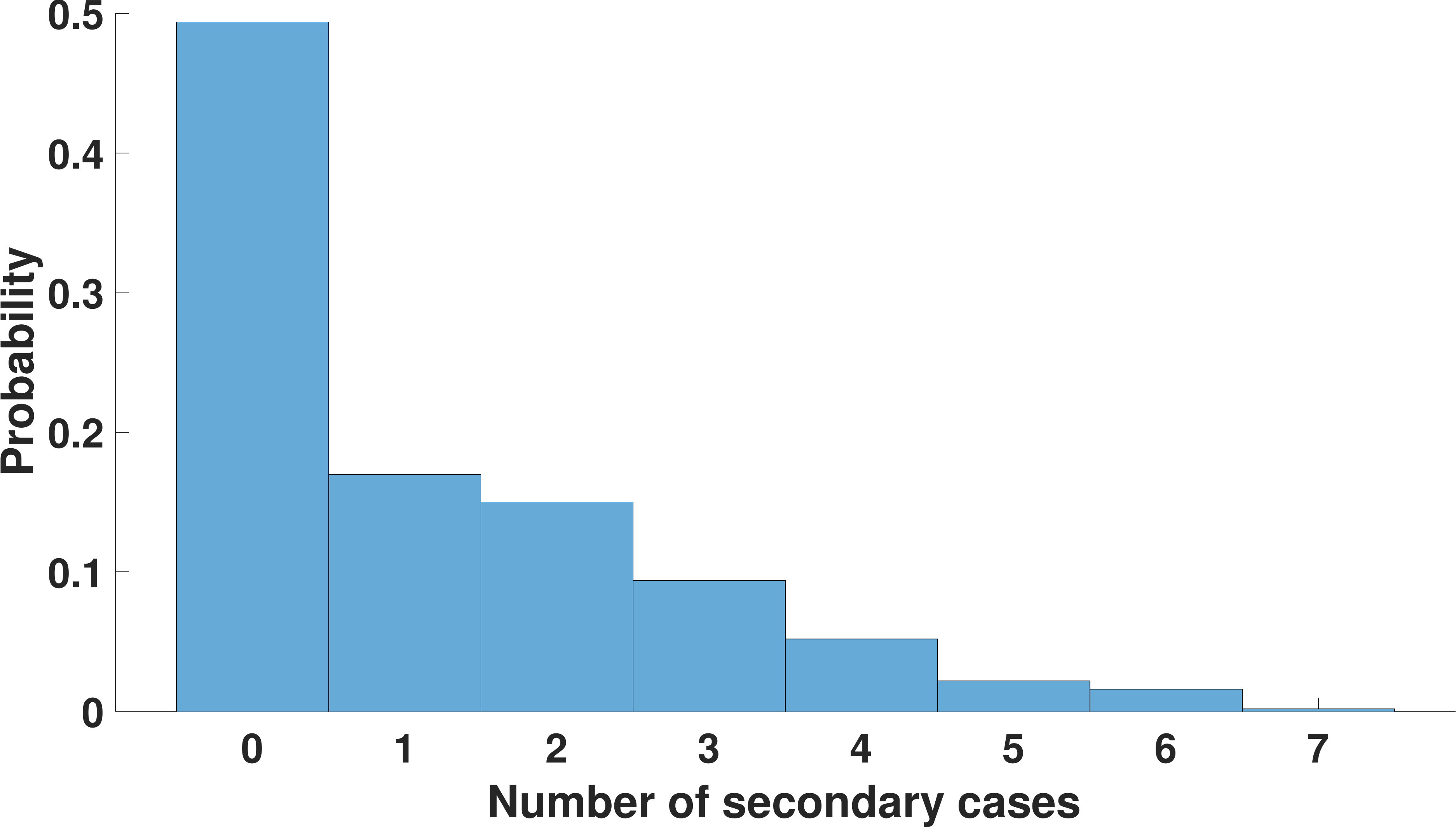}
	\end{center}
	\caption{\textit{In these figures, we present  sets of 500 samples of secondary cases produced by a single infected individual in a population of $S=10^7$ susceptible hosts. Theses samples are produced by using the IBM. We plot a histogram of the total number of secondary cases produced during the whole infection. This estimates the probability of a single infected to generate $n$ secondary cases (with $n$ in the abscissa).}} \label{Fig16}
\end{figure}
\noindent \textbf{Simulations for $I_0=10$:}
 \begin{figure}[H]
	\begin{center}
		\includegraphics[scale=0.15]{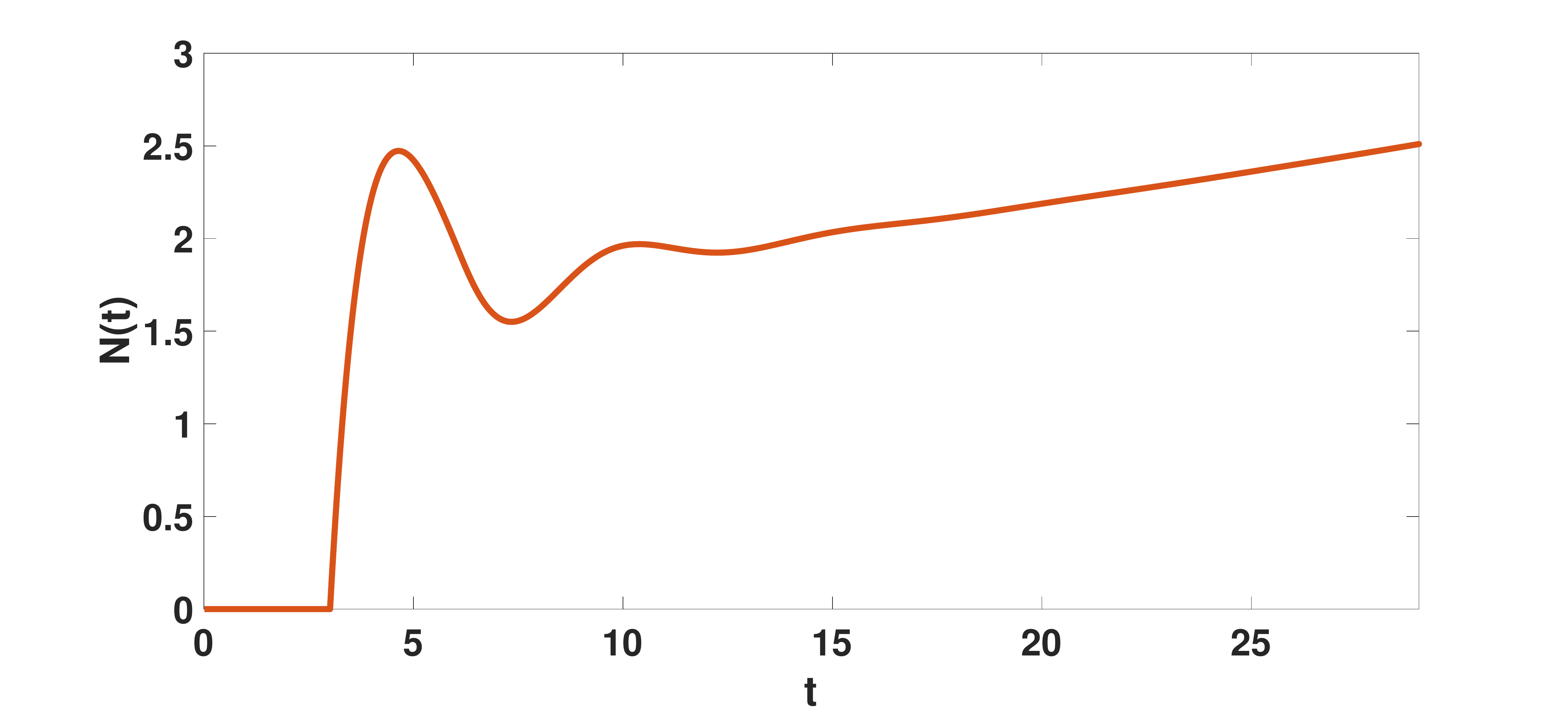}
		\includegraphics[scale=0.15]{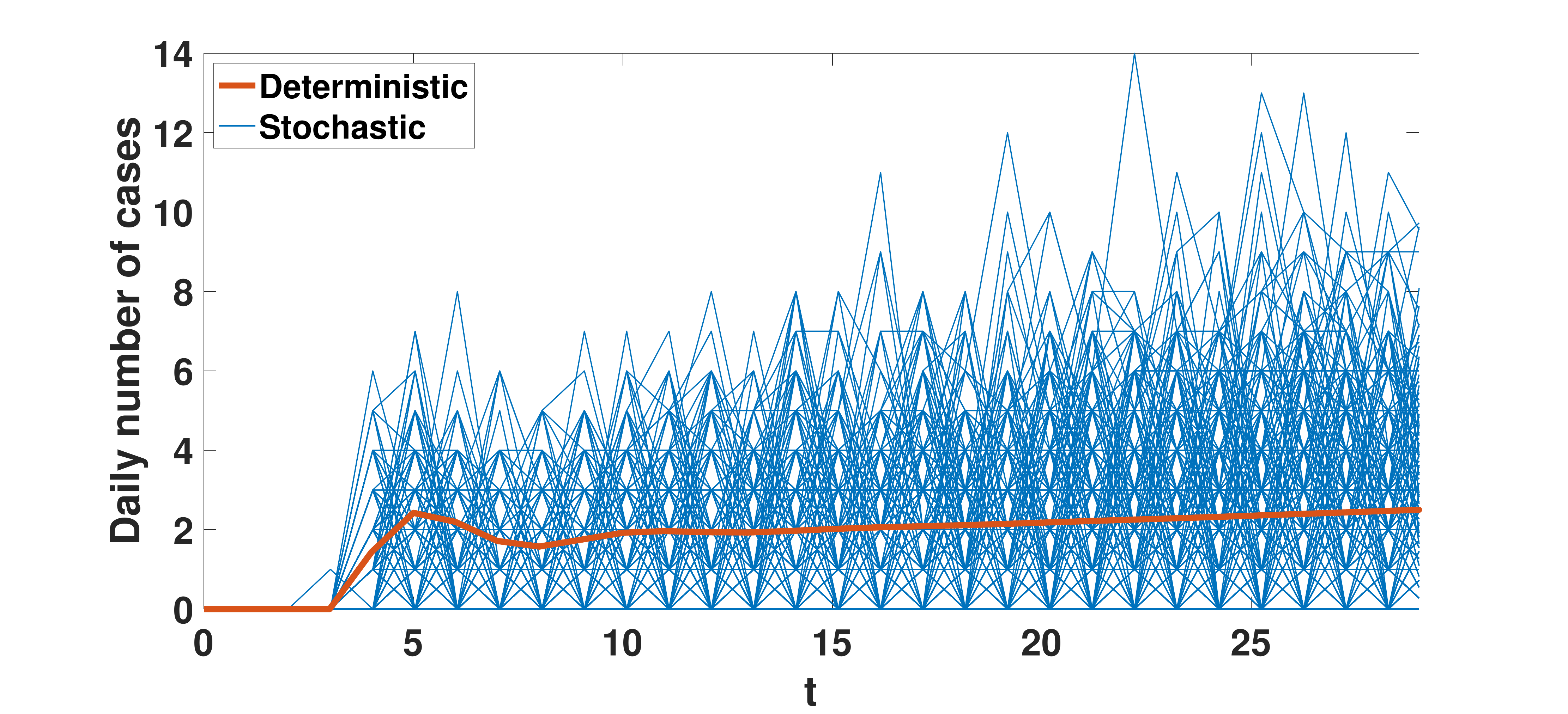}
	\end{center}
	\caption{\textit{On the left-hand side,  we plot the function $t \to N(t)$ solution of \eqref{4.3} with \eqref{2.4}. On the right-hand side, we plot the function $t \to \int_{t-1}^{t} N(s)ds$ (for $t=1,2, \ldots$) which corresponds to the daily number of cases obtained from by solving \eqref{4.3} with \eqref{2.4}, and we compare it with the daily number of cases obtained from $500$ runs of the IBM.}}\label{Fig17}
\end{figure}

\begin{figure}[H]
	\begin{center}
		\includegraphics[scale=0.15]{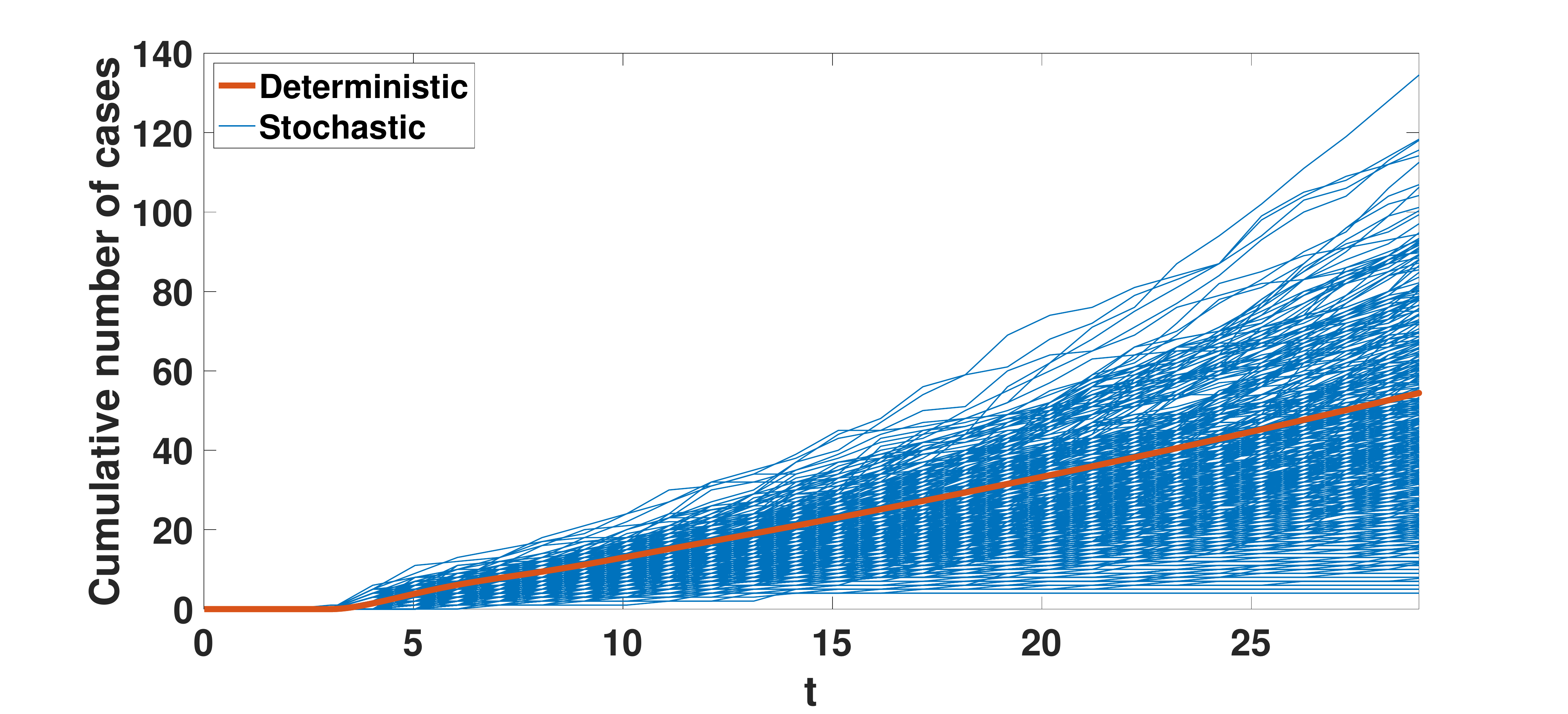}
		\includegraphics[scale=0.15]{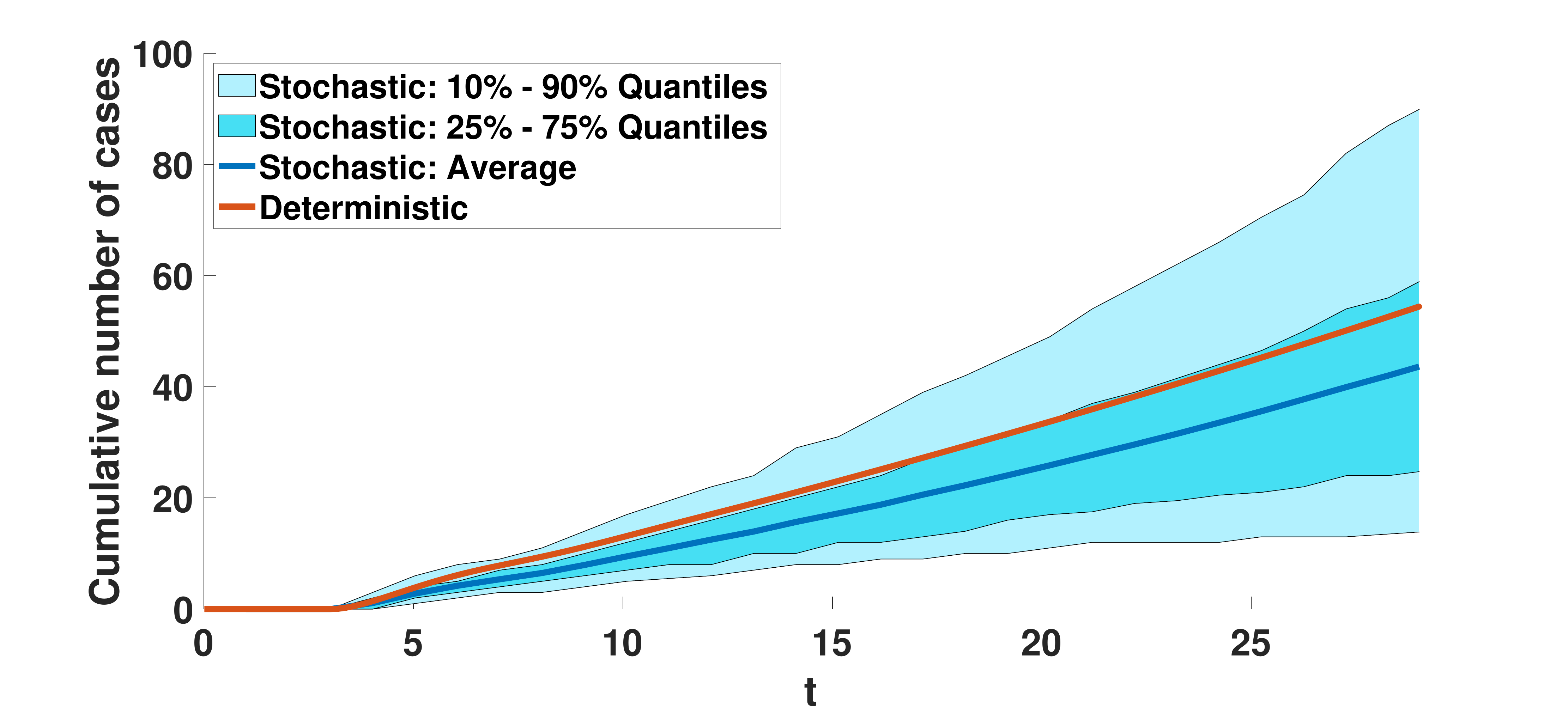}
	\end{center}
	\caption{\textit{On the left-hand side,  we plot the function $t \to \int_0^t N(s)ds$ (for $t=0,1,2, \ldots$) which corresponds to the cumulative number of cases obtained from by solving \eqref{4.3} with \eqref{2.4}, and we compare it with the cumulative number of cases obtained from $500$ runs of the IBM. On the right-hand side, we plot the average values of the $500$ runs obtained from the IBM as well as the quantiles ($10\%-90\%$ (light blue) and  $25\%-75\%$ (blue)).}}\label{Fig18}
\end{figure}

In Figures \ref{Fig19}-\ref{Fig21} we focus on the reconstruction of the daily reproduction number  $R_0(a)=\tau \, S_0 \,  e^{-  \nu \, a } \,  \beta\left(a\right)$. In Figure \ref{Fig19}, we focus on the reconstruction of the daily reproduction number  from deterministic simulations, while in Figures \ref{Fig20}-\ref{Fig21} we focus on the reconstruction of the daily reproduction number  from stochastic simulations. 
\begin{figure}[H]
	\begin{center}
		\includegraphics[scale=0.15]{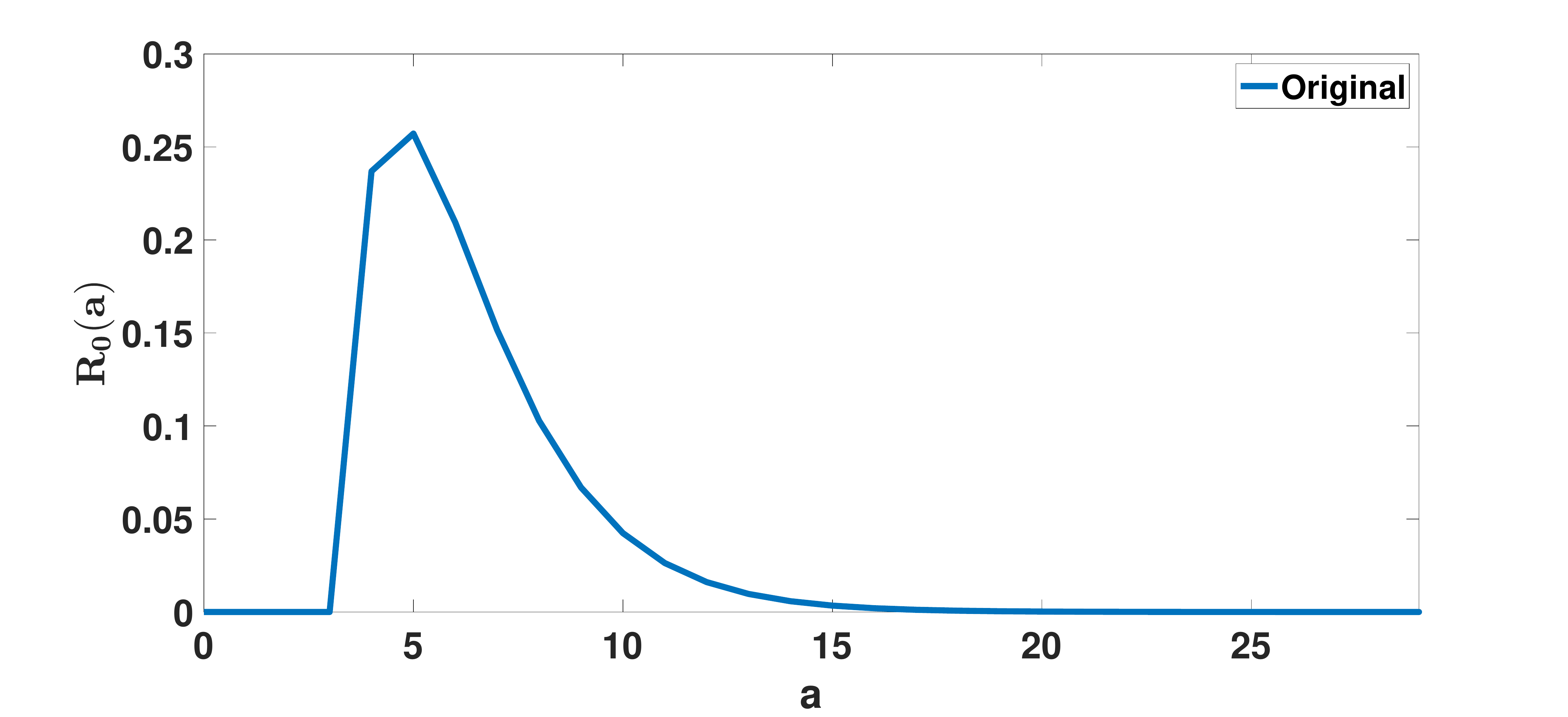}
		\includegraphics[scale=0.15]{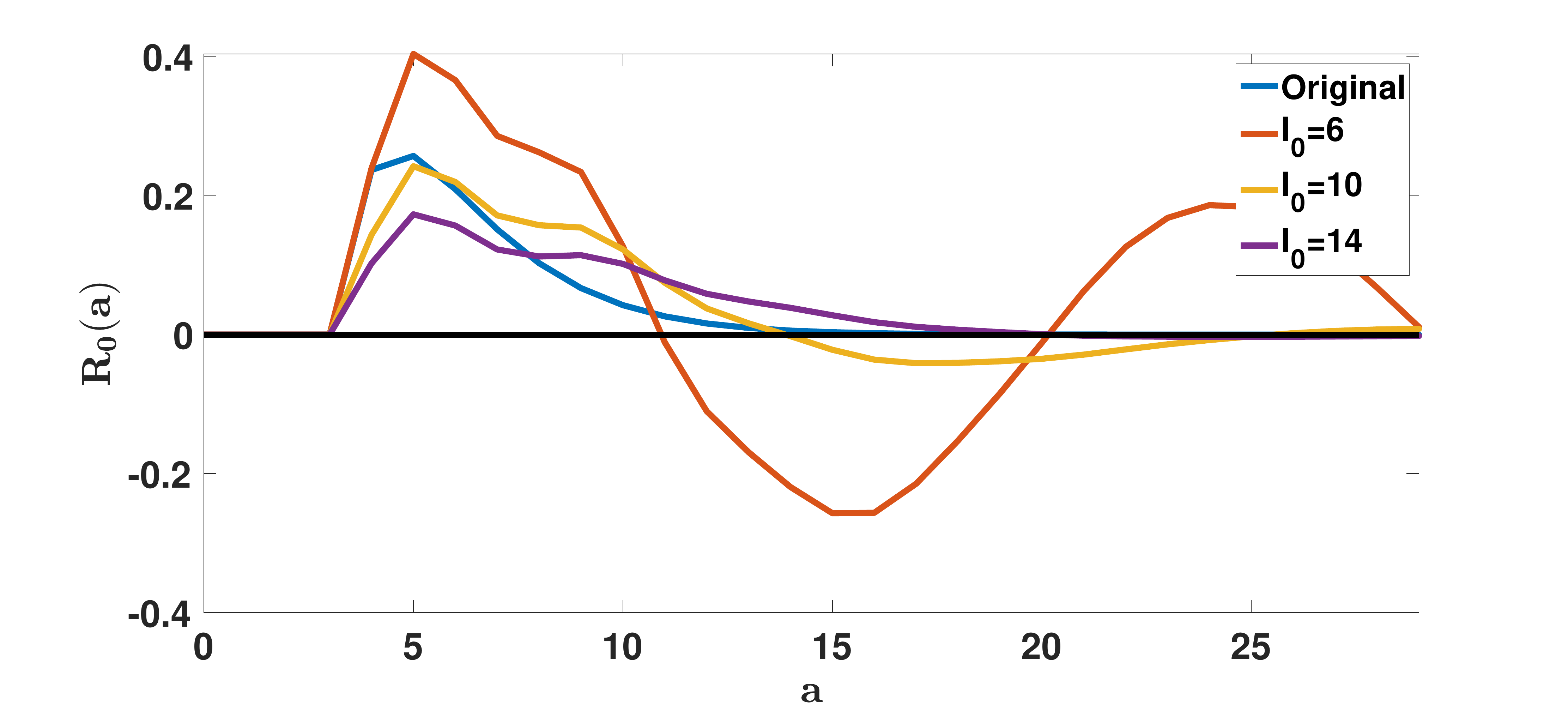}
	\end{center}
	\caption{\textit{On the left hand side, we plot the daily basic reproduction number by using the original formula $ R_0(a)$ with \eqref{7.2}.  On the right-hand side, we apply formula \eqref{6.5} to the flow of new infected obtained from the deterministic model. We vary $I_0=6, 10, 14$. The value $I_0=10$ corresponds to the value used for the simulation of the deterministic model.   The yellow curve gives the best visual fit, and the $R_0(a)$ becomes negative whenever $I_0$ becomes too small. }}\label{Fig19}
\end{figure}

\begin{figure}[H]
	\begin{center}
		\includegraphics[scale=0.15]{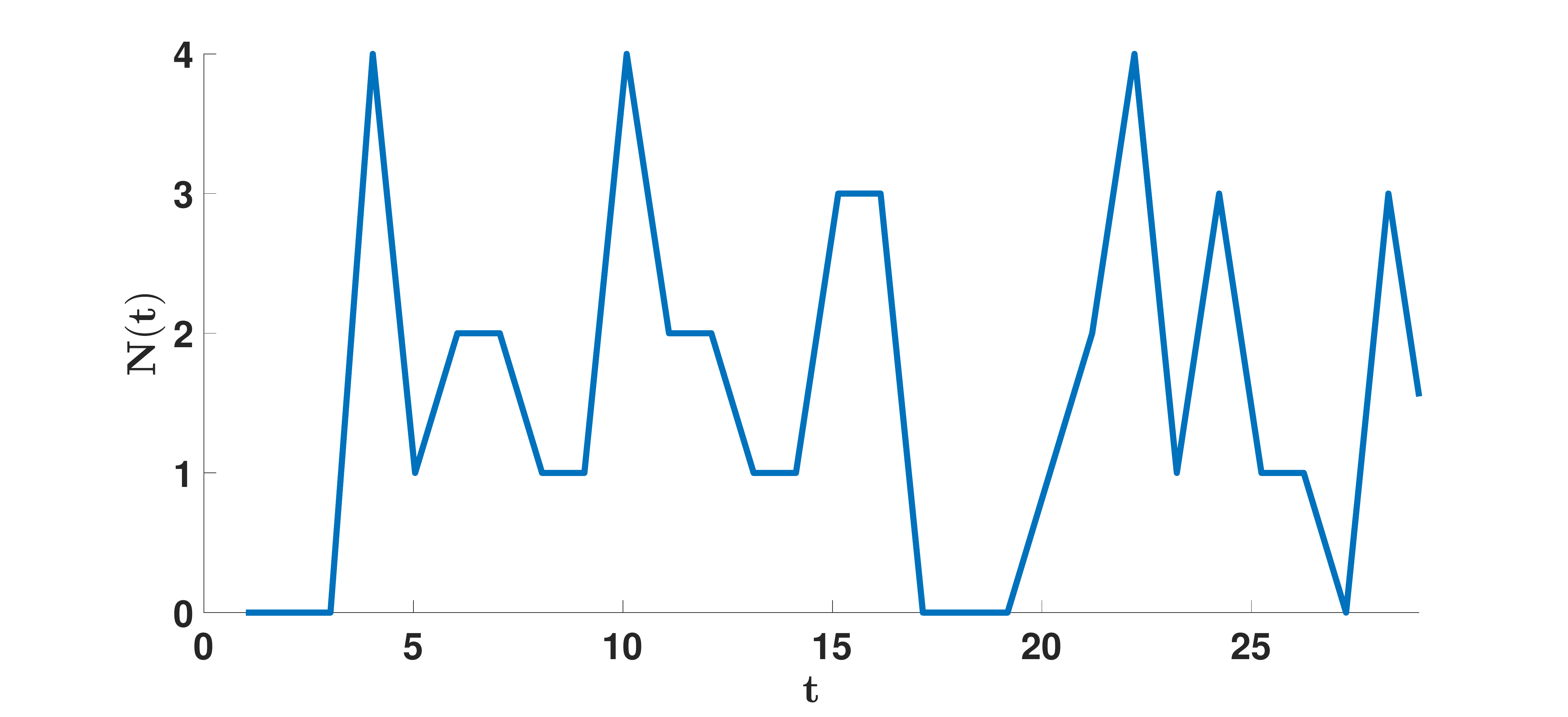}
		\includegraphics[scale=0.15]{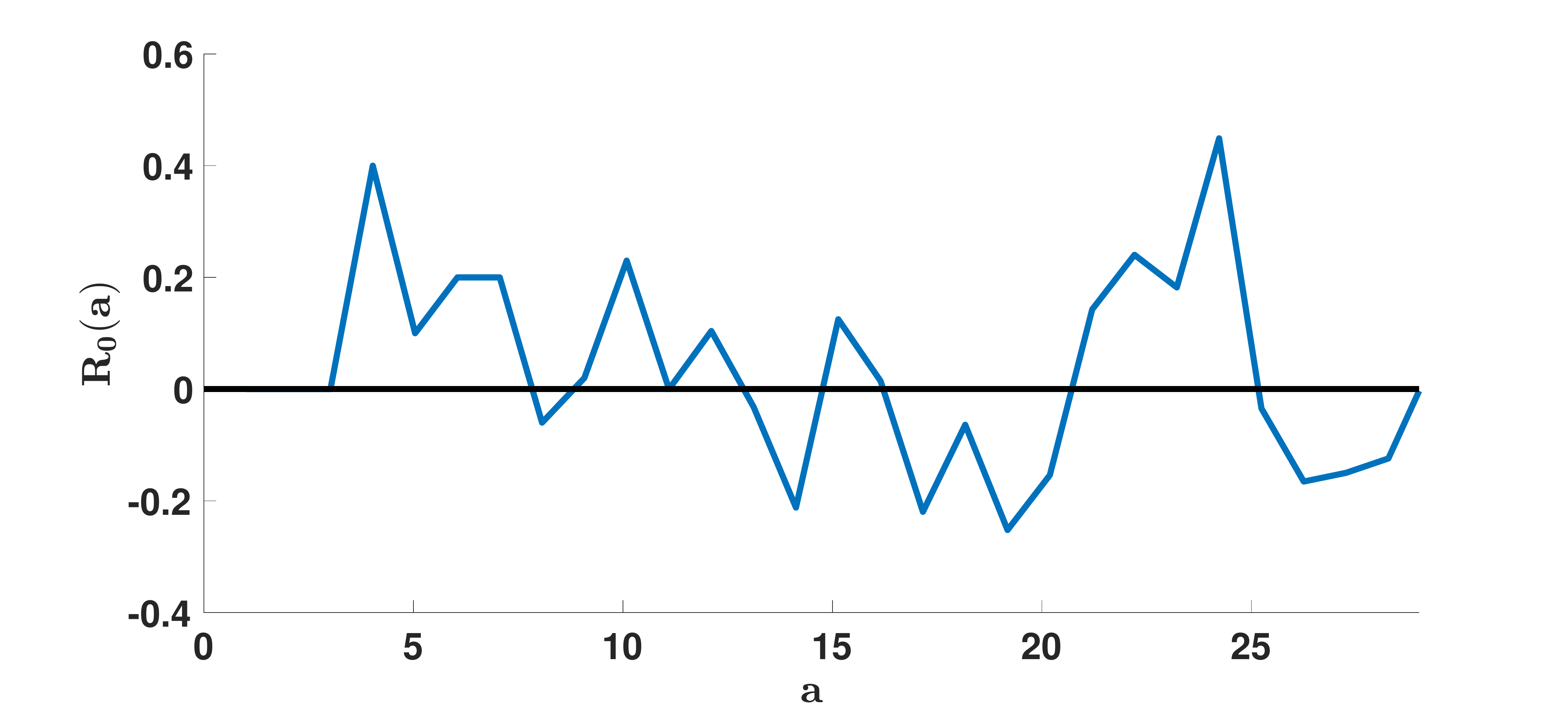}
	\end{center}
	\caption{\textit{On the left-hand side,  we plot the daily number of cases $t \to N(t) $ (for $t=0,1,2, \ldots$) obtained from a single run of the IBM.  On the right-hand side, we apply formula \eqref{6.5}  (with $I_0=10$) to  the daily number of cases of case obtained from the IBM.   }}\label{Fig20}
\end{figure}

\begin{figure}[H]
	\begin{center}
		\includegraphics[scale=0.15]{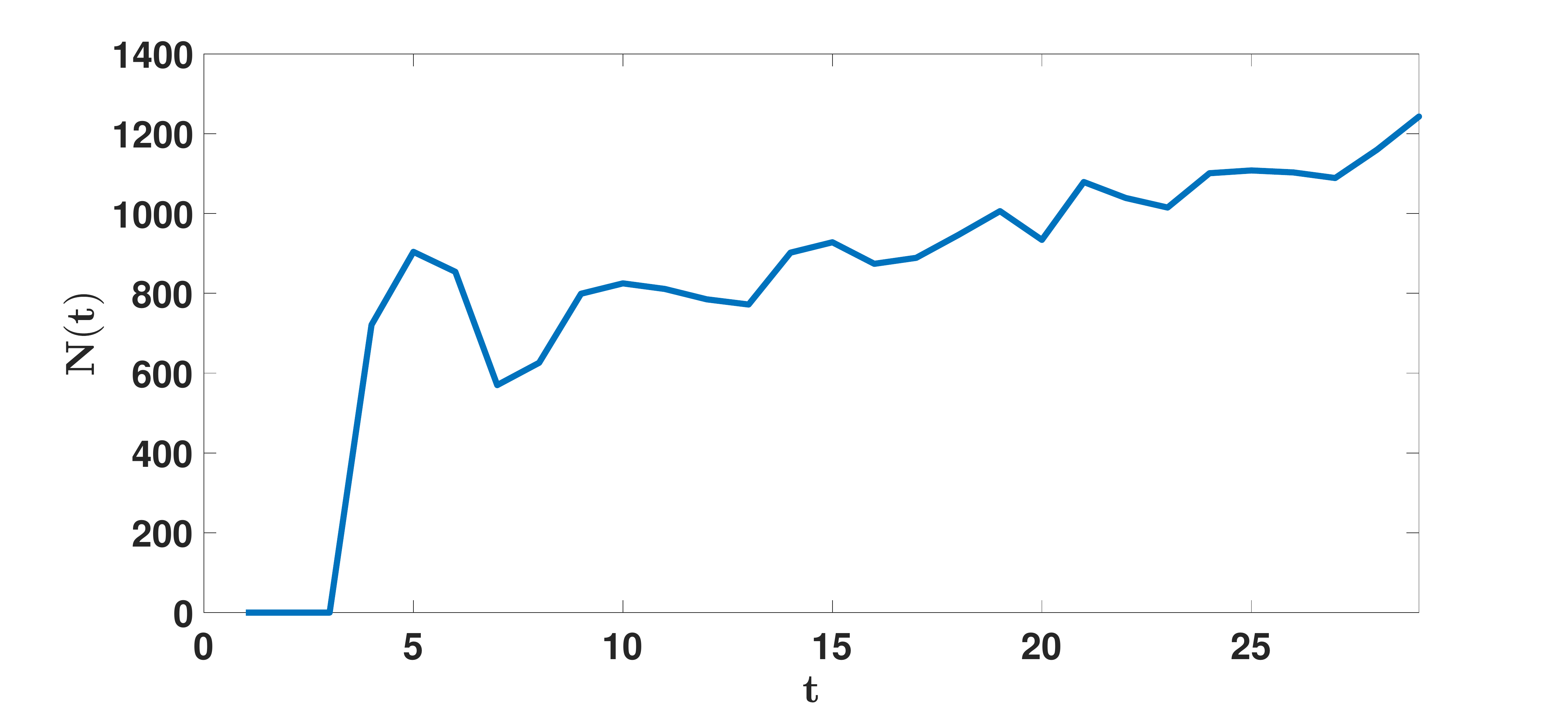}
		\includegraphics[scale=0.15]{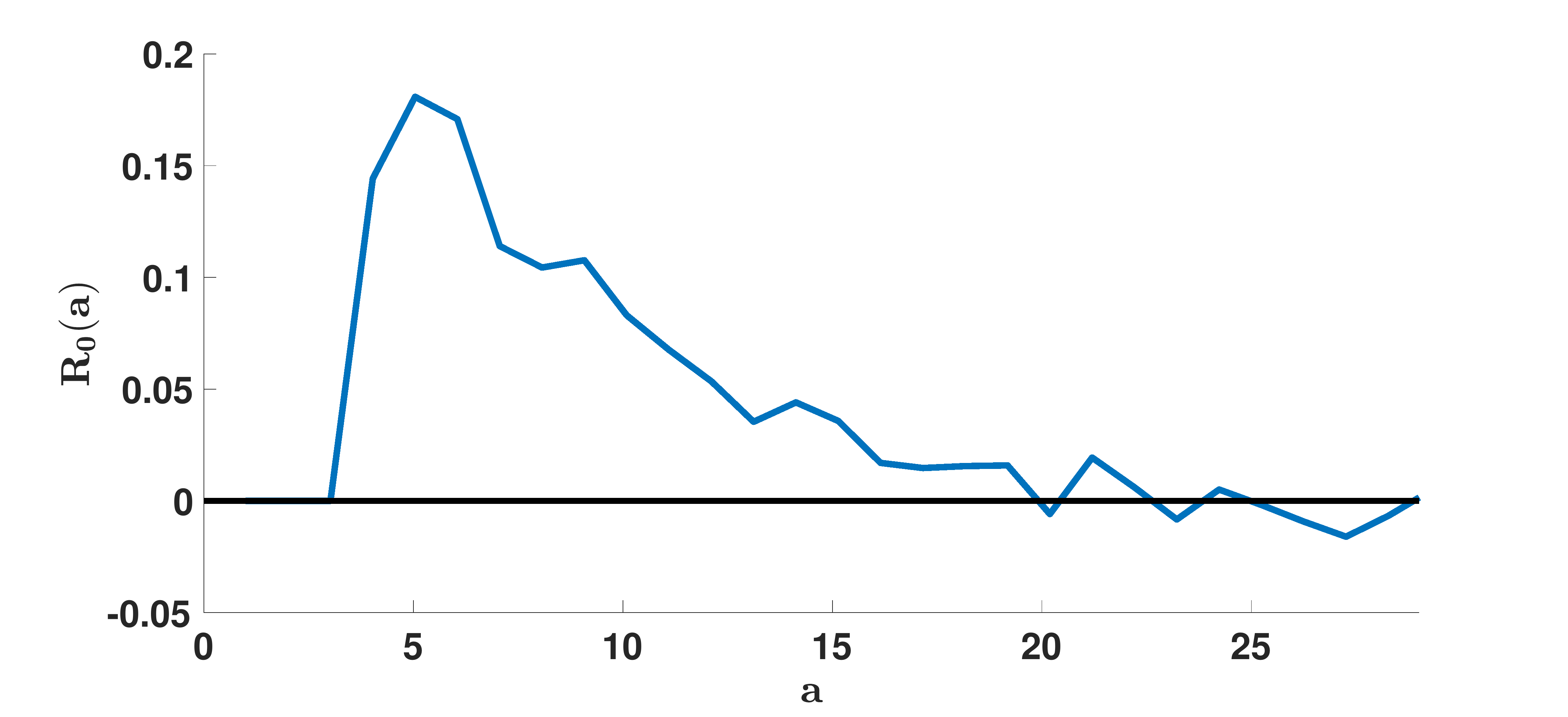}
	\end{center}
	\caption{\textit{On the left-hand side,  we plot the daily number of cases $t \to N(t)$ (for $t=0,1,2, \ldots$) obtained by summing the daily number of cases for $500$ runs of the IBM.  On the right-hand side, we apply formula \eqref{6.5}  (with $I_0=500 \times 10$) to the daily number of cases obtained from the IBM. }}\label{Fig21}
\end{figure}
\noindent \textbf{Simulations for $I_0=1000$:}
 \begin{figure}[H]
	\begin{center}
		\includegraphics[scale=0.15]{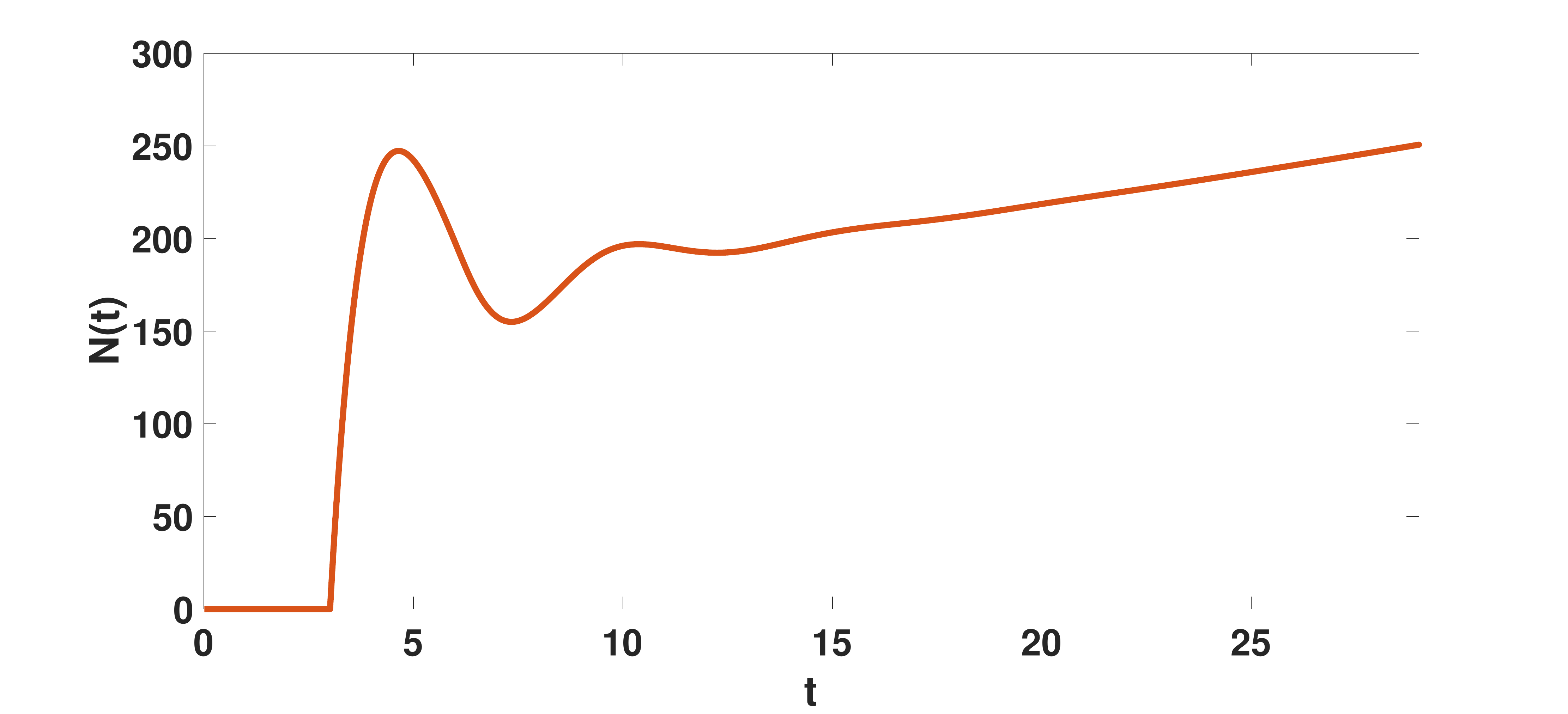}
		\includegraphics[scale=0.15]{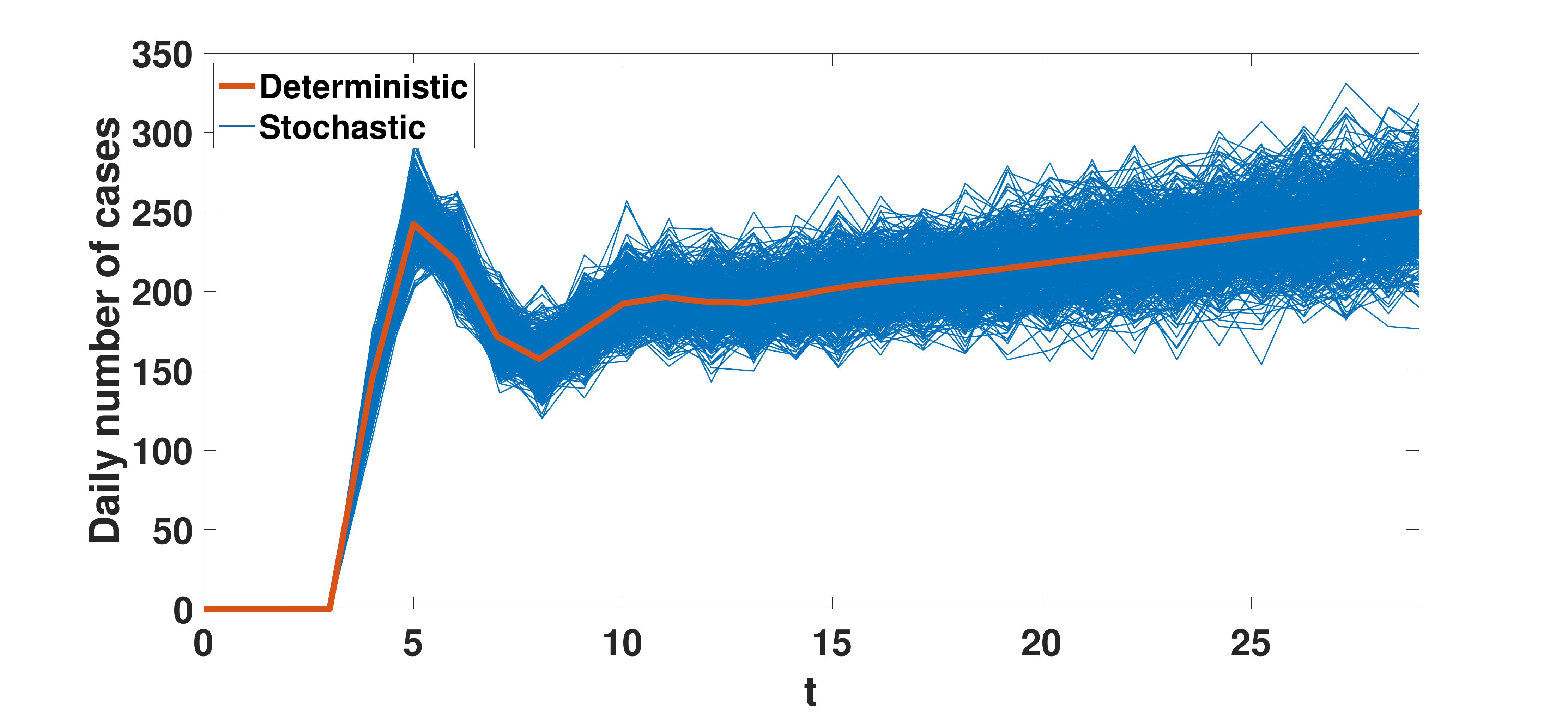}
	\end{center}
	\caption{\textit{On the left-hand side,  we plot the function $t \to N(t)$ solution of \eqref{4.3} with \eqref{2.4}. On the right-hand side, we plot the function $t \to \int_{t-1}^{t} N(s)ds$ (for $t=1,2, \ldots$) which corresponds to the daily number of cases obtained from by solving \eqref{4.3} with \eqref{2.4}, and we compare it with the daily number of cases obtained from $500$ runs of the IBM.}}\label{Fig22}
\end{figure}

\begin{figure}[H]
	\begin{center}
		\includegraphics[scale=0.15]{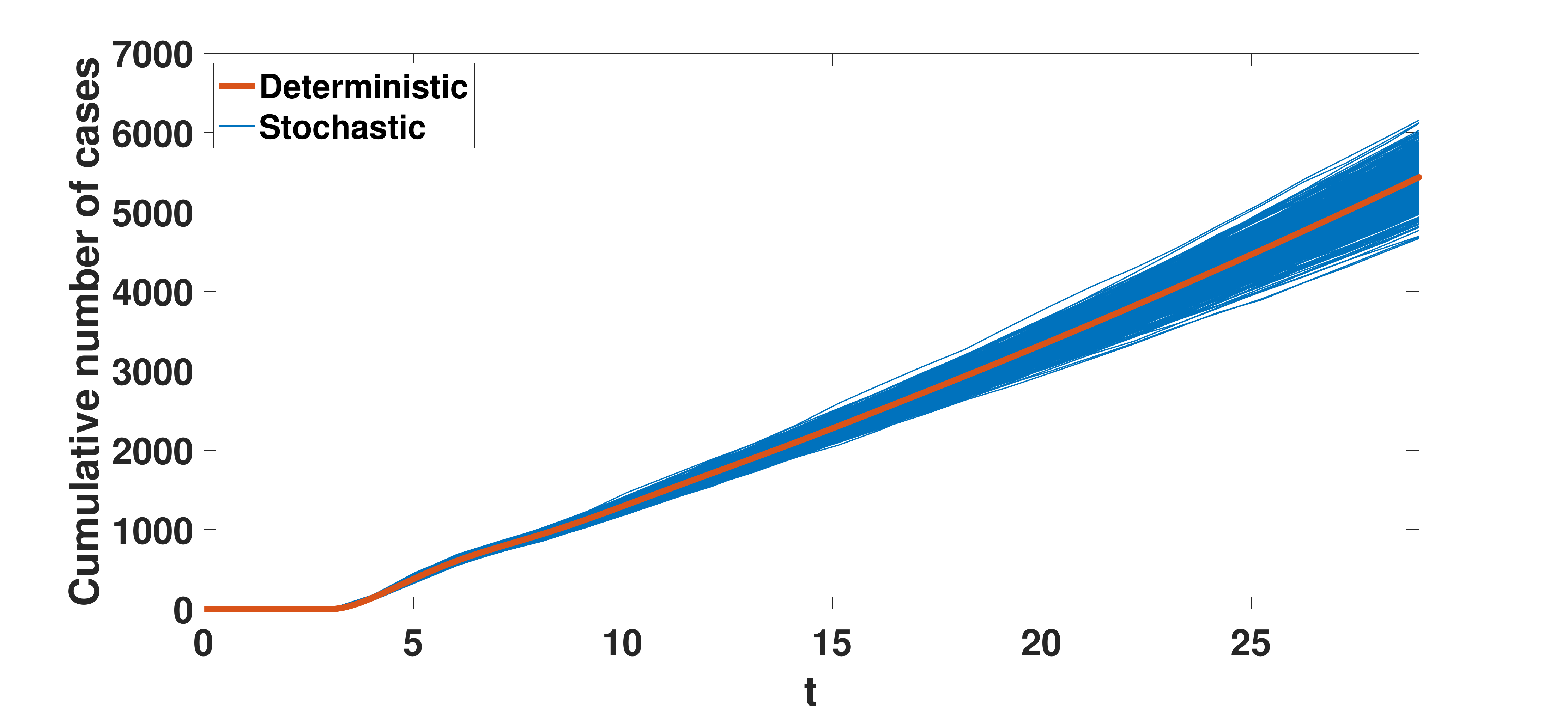}
		\includegraphics[scale=0.15]{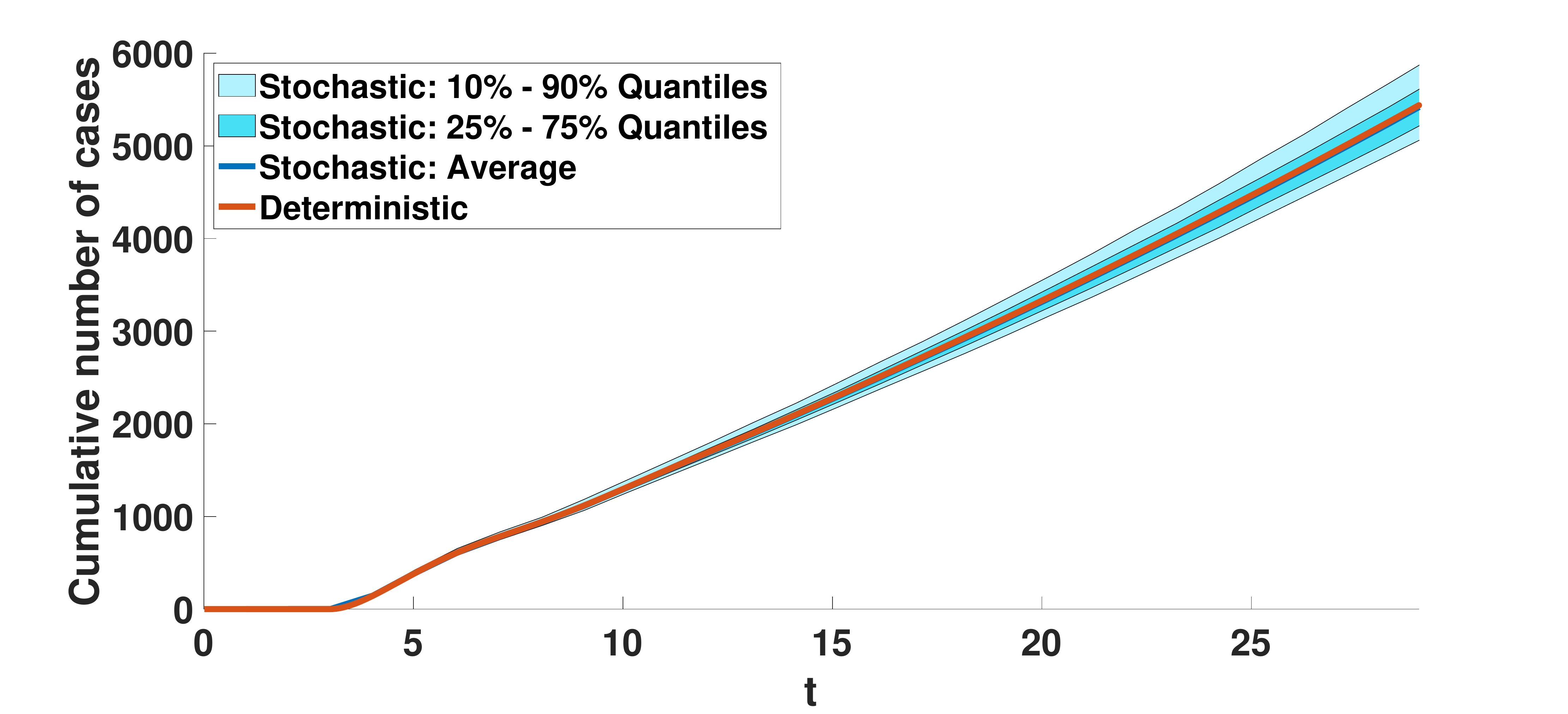}
	\end{center}
	\caption{\textit{On the left-hand side,  we plot the function $t \to \int_0^t N(s)ds$ (for $t=0,1,2, \ldots$) which corresponds to the cumulative number of cases obtained from by solving \eqref{4.3} with \eqref{2.4}, and we compare it with the cumulative number of cases obtained from $500$ runs of the IBM. On the right-hand side, we plot the average values of the $500$ runs obtained from the IBM as well as the quantiles ($10\%-90\%$ (light blue) and  $25\%-75\%$ (blue)).}}\label{Fig23}
\end{figure}

In Figures \ref{Fig24}-\ref{Fig26} we focus on the reconstruction of the daily reproduction number  $R_0(a)=\tau \, S_0 \,  e^{-  \nu \, a } \,  \beta\left(a\right)$. In Figure \ref{Fig24}, we focus on the reconstruction of the daily reproduction number  from deterministic simulations, while in Figures \ref{Fig25}-\ref{Fig26} we focus on the reconstruction of the daily reproduction number  from stochastic simulations. 
\begin{figure}[H]
	\begin{center}
		\includegraphics[scale=0.15]{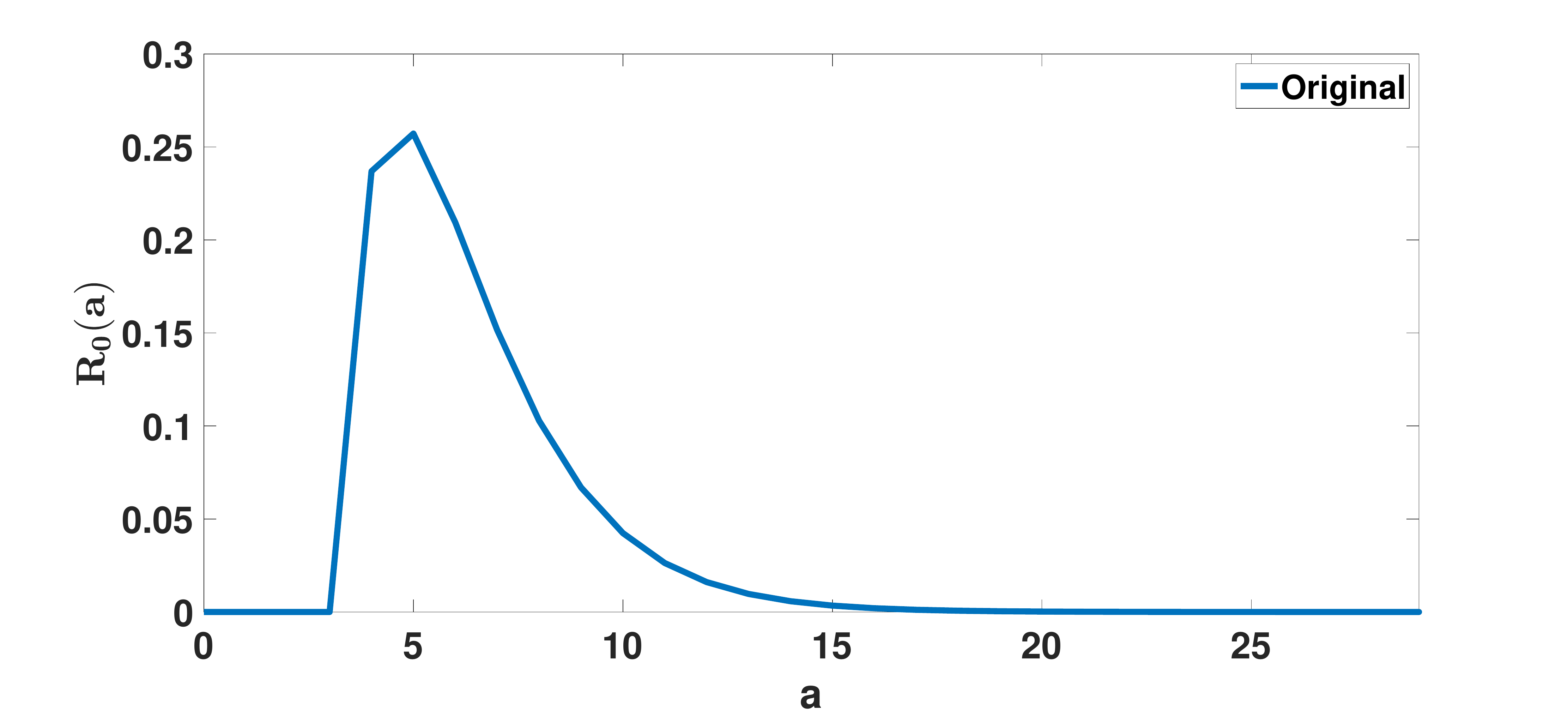}
		\includegraphics[scale=0.15]{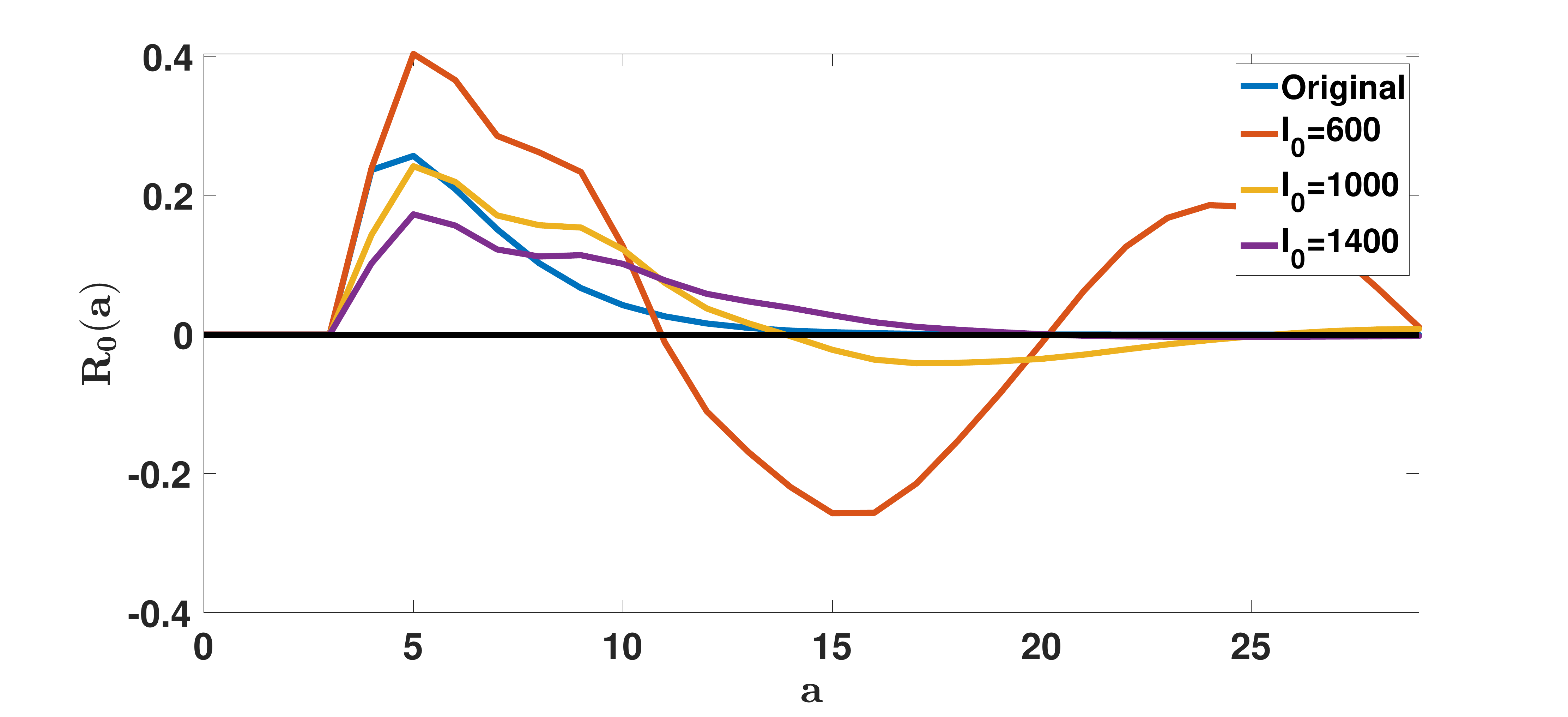}
	\end{center}
	\caption{\textit{On the left hand side, we plot the daily basic reproduction number by using the original formula $ R_0(a)$ with \eqref{7.2}.  On the right-hand side, we apply formula \eqref{6.5} to the flow of new infected obtained from the deterministic model. We vary $I_0=600, 1000, 1400$. The value $I_0=1000$ corresponds to the value used for the simulation of the deterministic model.   The yellow curve gives the best visual fit, and the $R_0(a)$ becomes negative whenever $I_0$ becomes too small. }}\label{Fig24}
\end{figure}

\begin{figure}[H]
	\begin{center}
		\includegraphics[scale=0.15]{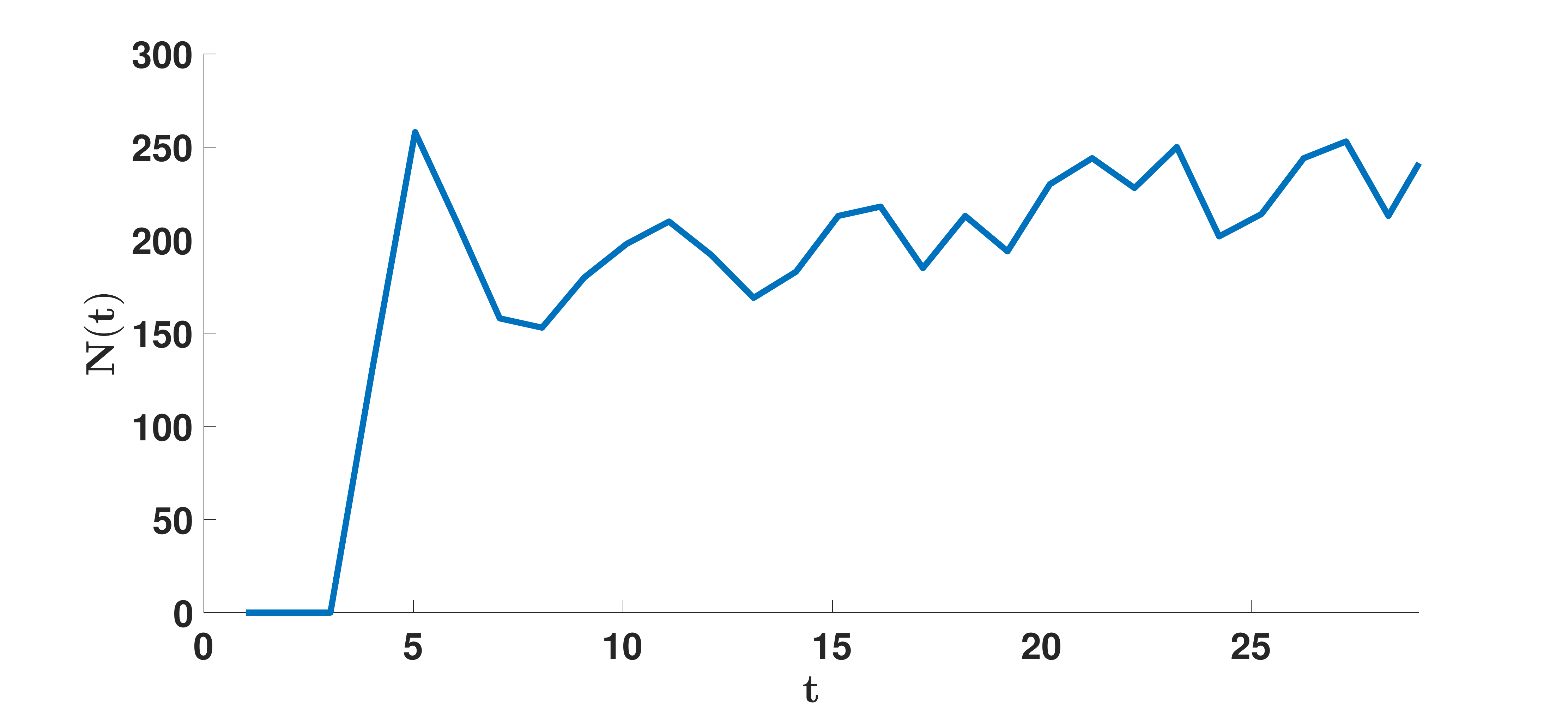}
		\includegraphics[scale=0.15]{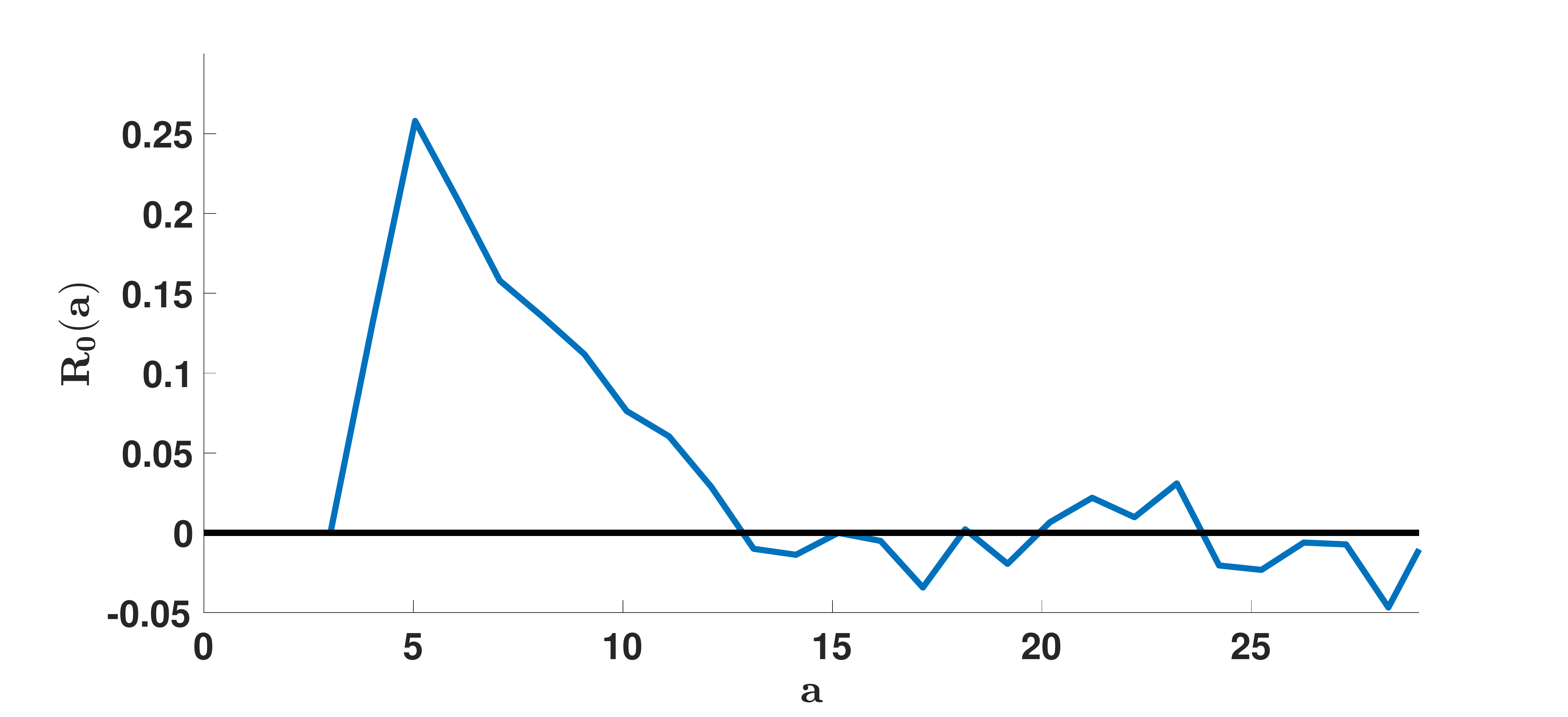}
	\end{center}
	\caption{\textit{On the left-hand side,  we plot the daily number of cases $t \to N(t) $ (for $t=0,1,2, \ldots$) obtained from a single run of the IBM.  On the right-hand side, we apply formula \eqref{6.5}  (with $I_0=1000$) to  the daily number of cases of case obtained from the IBM.   }}\label{Fig25}
\end{figure}

\begin{figure}[H]
	\begin{center}
		\includegraphics[scale=0.15]{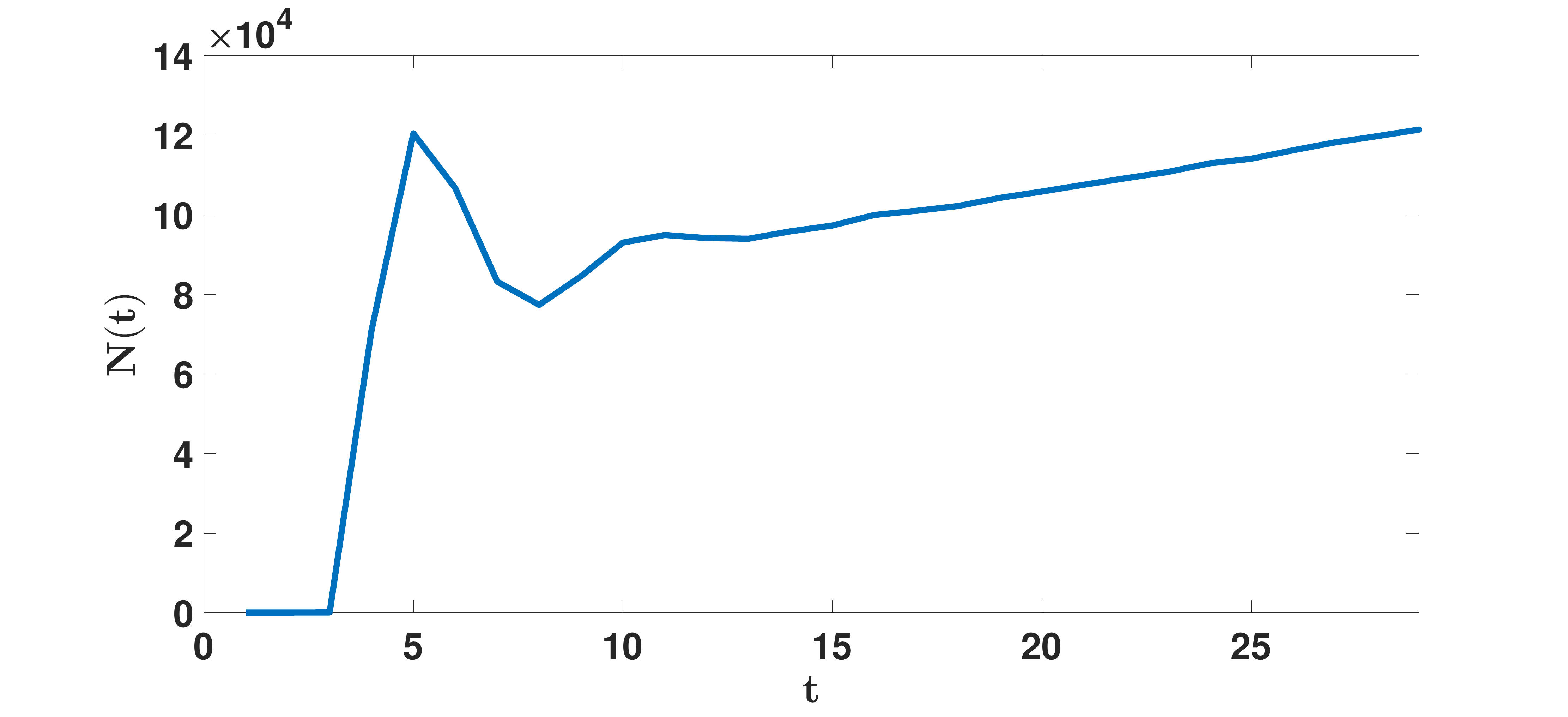}
		\includegraphics[scale=0.15]{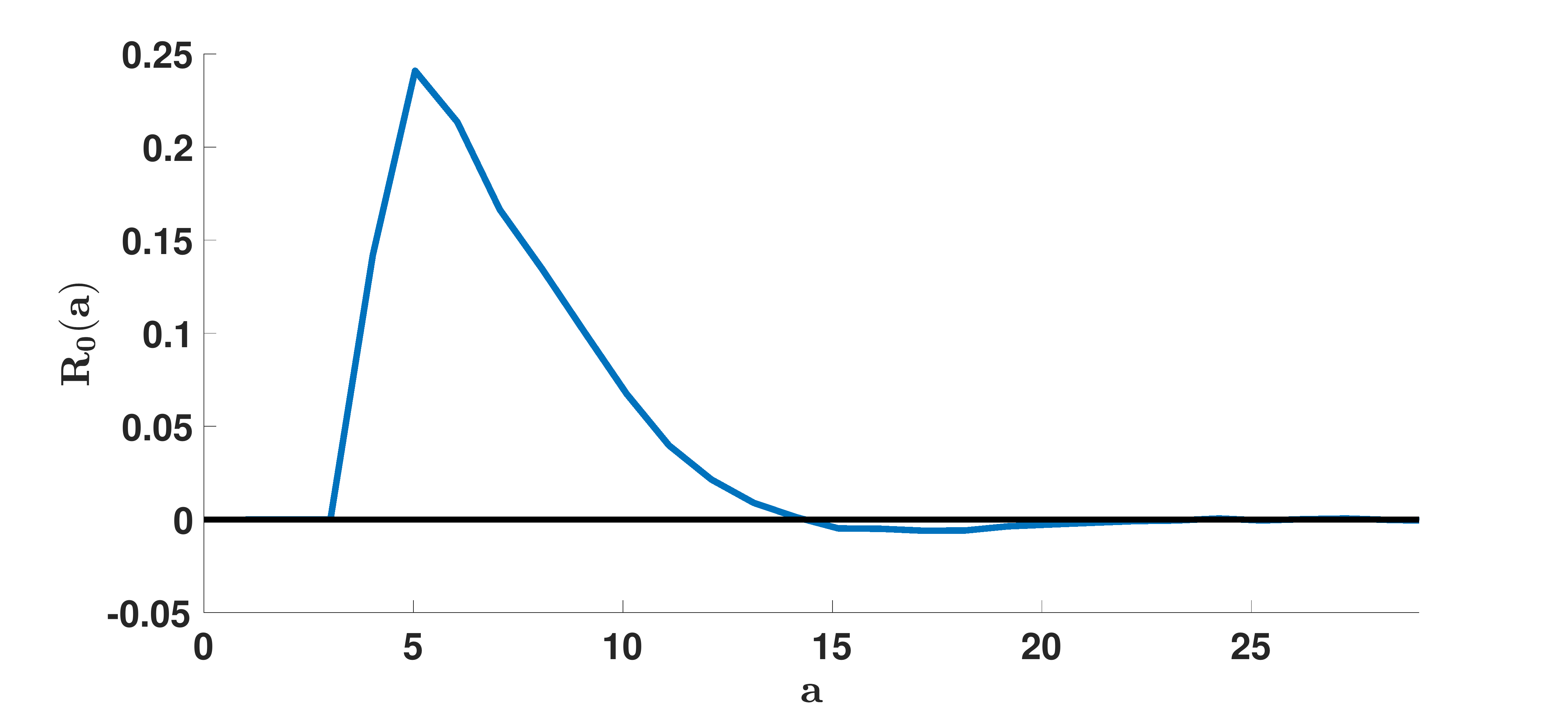}
	\end{center}
	\caption{\textit{On the left-hand side,  we plot the daily number of cases $t \to N(t)$ (for $t=0,1,2, \ldots$) obtained by summing the daily number of cases for $500$ runs of the IBM.  On the right-hand side, we apply formula \eqref{6.5}  (with $I_0=500 \times 1000$) to the daily number of cases obtained from the IBM. }}\label{Fig26}
\end{figure}
\subsubsection{Example 2}
It is common to see biphasic flu clinically: after incubation of
	one day, there is a high fever, then a drop in temperature before rising again,
	hence the term "V" fever \cite{Chao}. Such a biphasic contagiousness is also observed in Covid-19. The viral load in throat swab and sputum has been measured for  Covid-19 patients, which leads to biphasic contagiousness \cite{DORSTW, Pan}. To cover these type of infectious diseases, we introduce the following form for the probability to be infectious 
\begin{equation}\label{7.3}
\beta(a)=  0.5\times 4q \left\{(a-a_0)  \left(1-q (a-a_0) \right) \right\}^+ + 4q  \left\{ (a-pa_0) \left(1-q(a-pa_0) \right)  \right\}^+ ,  
\end{equation}
with 
$$
a_0=3\text{ days},  p=2.5, \text{ and } q=0.3. 
$$
  \begin{figure}[H]
	\begin{center}
		\includegraphics[scale=0.15]{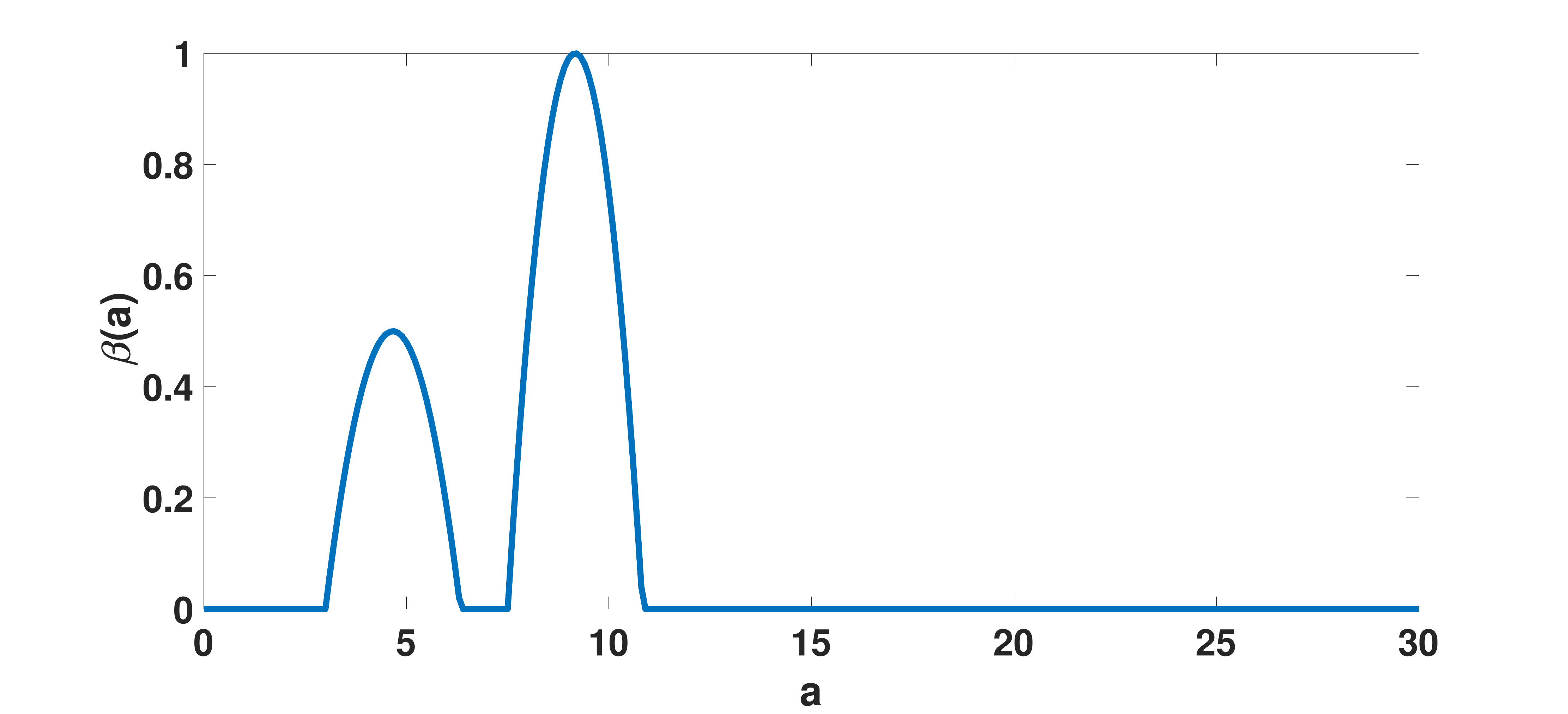}
		\includegraphics[scale=0.15]{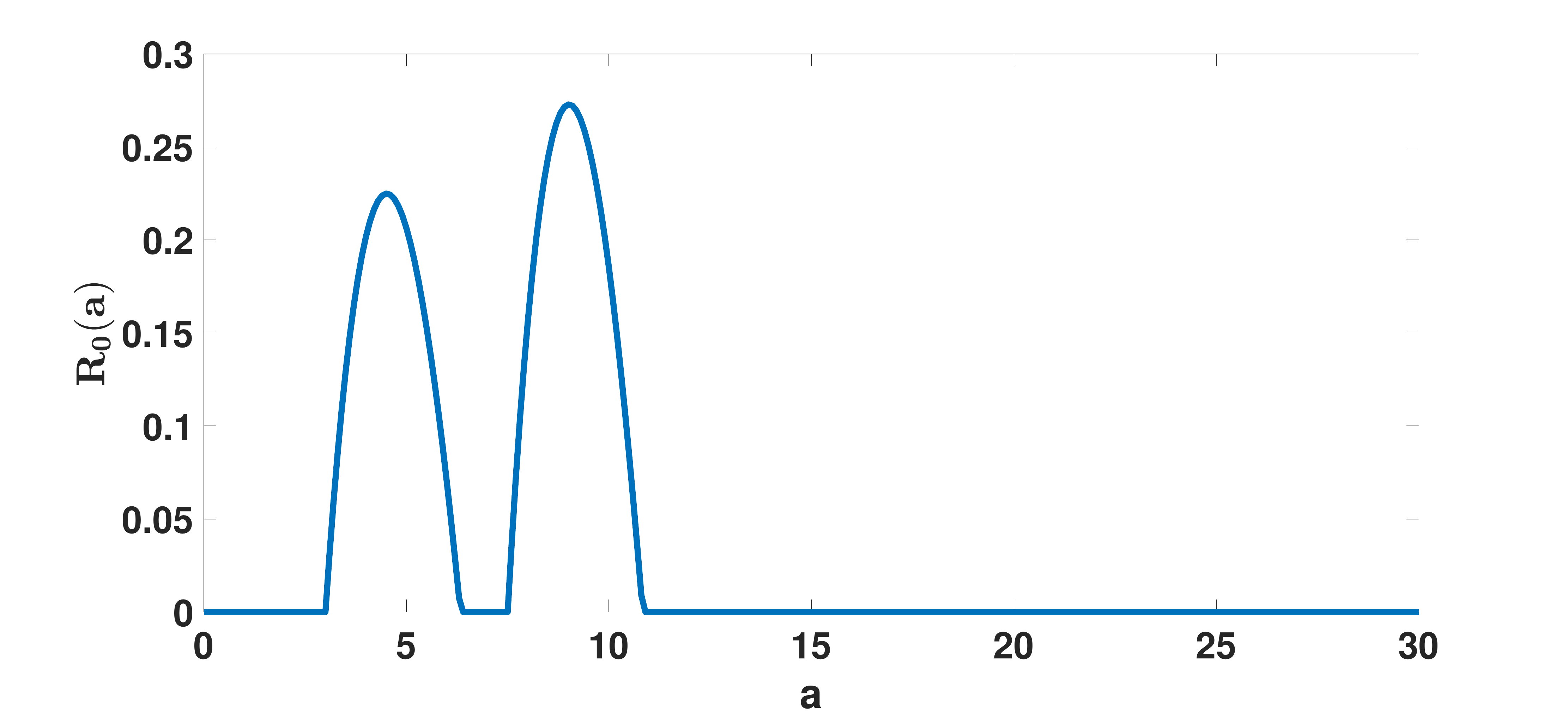}
	\end{center}
	\caption{\textit{On the left-hand side,  we plot the function $a \to \beta(a)$. On the right-hand side, we plot the function $a \to R_0(a)=\tau_0 \times S_0  \times \beta(a) \times  e^{-\nu a}$.}}\label{Fig27}
\end{figure}
In the Figures \ref{Fig28}-\ref{Fig29} we use the IBM  to investigate some properties of the clusters obtained from the stochastic simulations. We compare such a stochastic sample with the original  $a \to R_0(a)$. 
	
	By comparing Figures \ref{Fig30}-\ref{Fig31} (for $I_0=10$) and   Figures \ref{Fig35}-\ref{Fig36} (for $I_0=1\, 000$), we observe the convergence of the IBM to the deterministic model.  
	
	Then in Figures \ref{Fig32}-\ref{Fig34} (for $I_0=10$) and  Figures \ref{Fig37}-\ref{Fig39} (for $I_0=1\,000$), we apply the discrete-time equation \eqref{6.5} to reconstruct $a \to R_0(a)$ from the trajectories of the deterministic or stochastic models. In the deterministic model, we observe the effect of the day-by-day discretization (which corresponds to the daily reported data). In the stochastic case, we observe the effect of the stochasticity of the IBM. \medskip

\medskip 
\noindent \textbf{First generation of secondary cases produced by a single infected:}
\begin{figure}[H]
	\centering
	\begin{minipage}{0.48\textwidth}
		\centering
		\textbf{(a)} \\
			\includegraphics[scale=0.15]{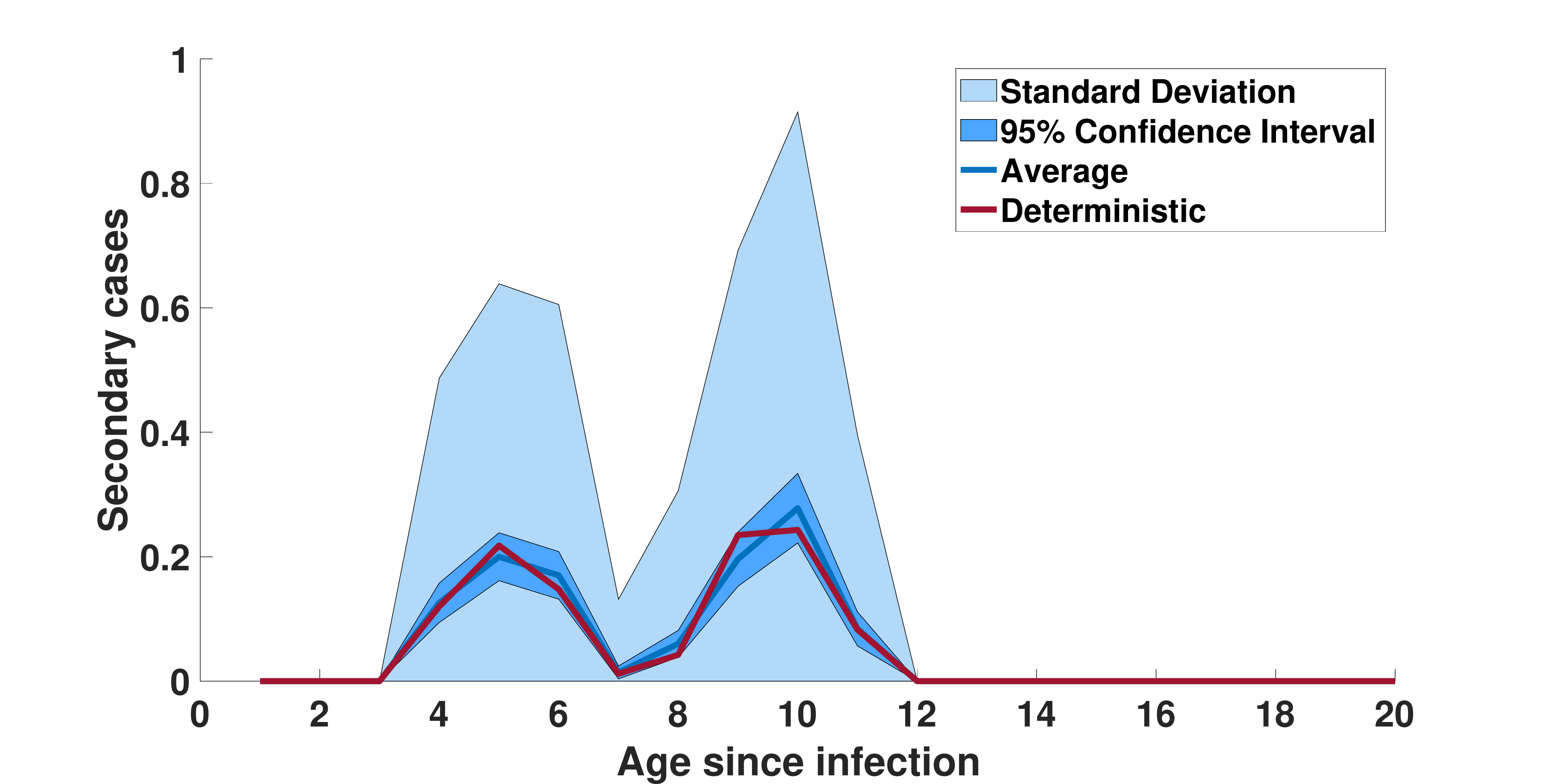}
	\end{minipage} 
	\begin{minipage}{0.48\textwidth}
		\centering
		\textbf{(b)} \\
		\includegraphics[scale=0.15]{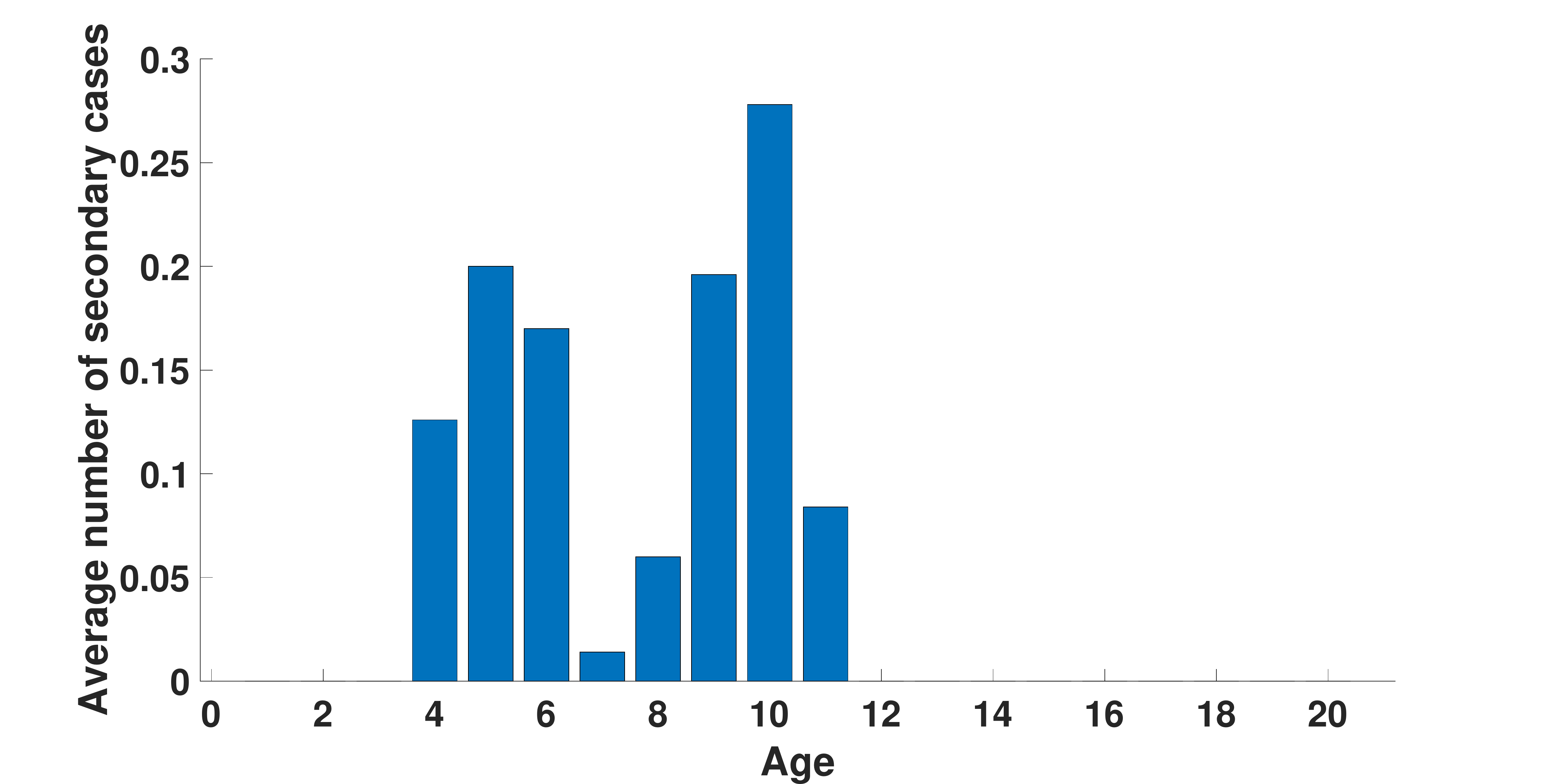}
	\end{minipage} 
	\caption{\textit{In these figures, we present  sets of 500 samples of secondary cases produced by a single infected individual in a population of $S=10^7$ susceptible hosts. Theses samples are produced by using the IBM. \textbf{(a)} Statistical summary: the blue curve represents the average number of cases at age of infection $a$; the dark blue area is the 95\% confidence interval of this average obtained by fitting a Gaussian distribution to the data; the light blue area corresponds to the standard deviation; the orange curve is the deterministic daily basic reproductive number at age $a$. \textbf{(b)} Bar graph of the average number of secondary cases as a function of the age since infection.}}\label{Fig28} 
\end{figure}
	 \begin{figure}[H]
		\begin{center}
			\includegraphics[scale=0.15]{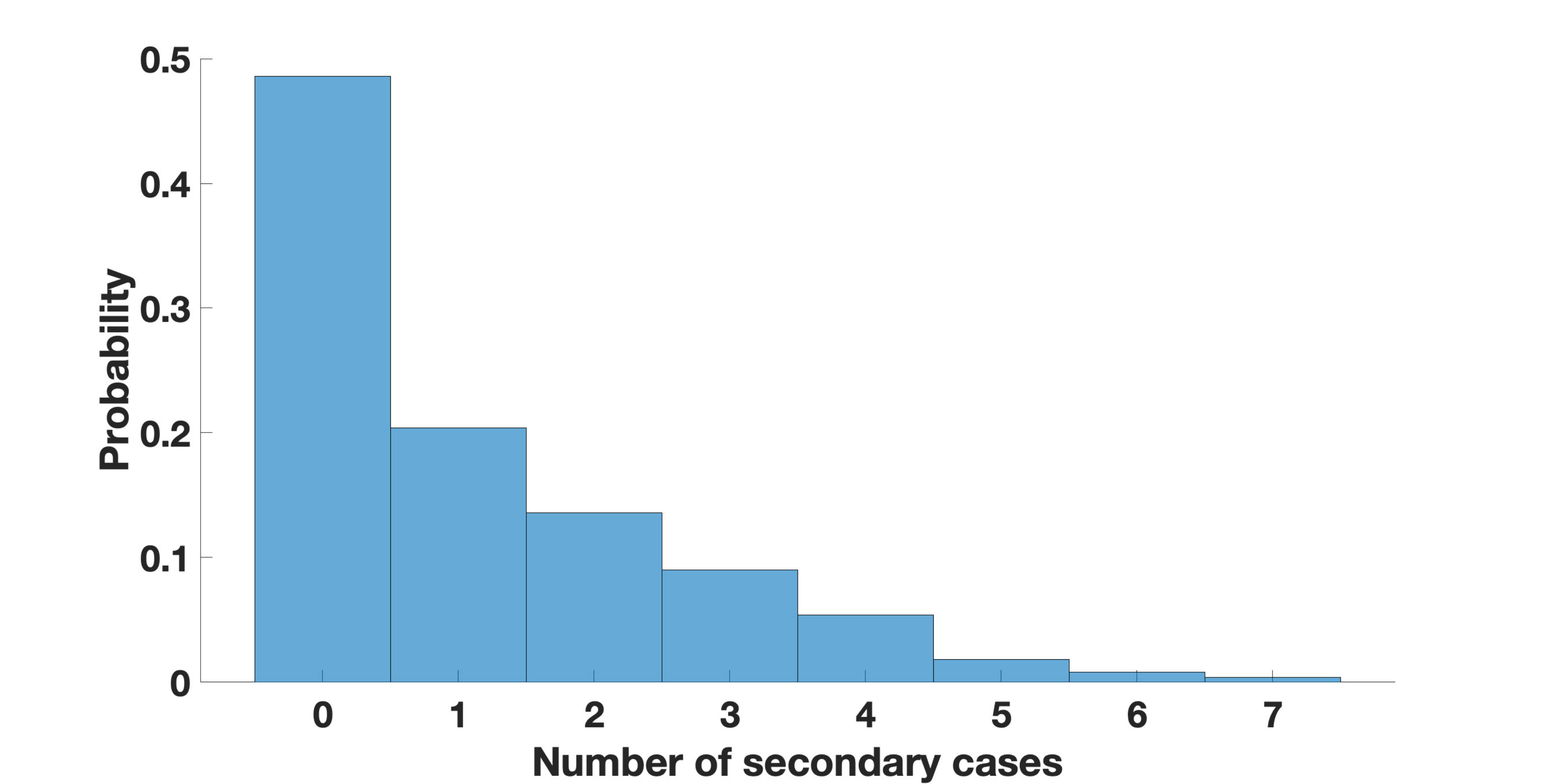}
		\end{center}
	\caption{\textit{In these figures, we present  sets of 500 samples of secondary cases produced by a single infected individual in a population of $S=10^7$ susceptible hosts. Theses samples are produced by using the IBM. We plot a histogram of the total number of secondary cases produced during the whole infection. This estimates the probability of a single infected to generate $n$ secondary cases (with $n$ in the abscissa).}} \label{Fig29}
\end{figure}
\noindent \textbf{Simulations for $I_0=10$:}
 \begin{figure}[H]
	\begin{center}
		\includegraphics[scale=0.15]{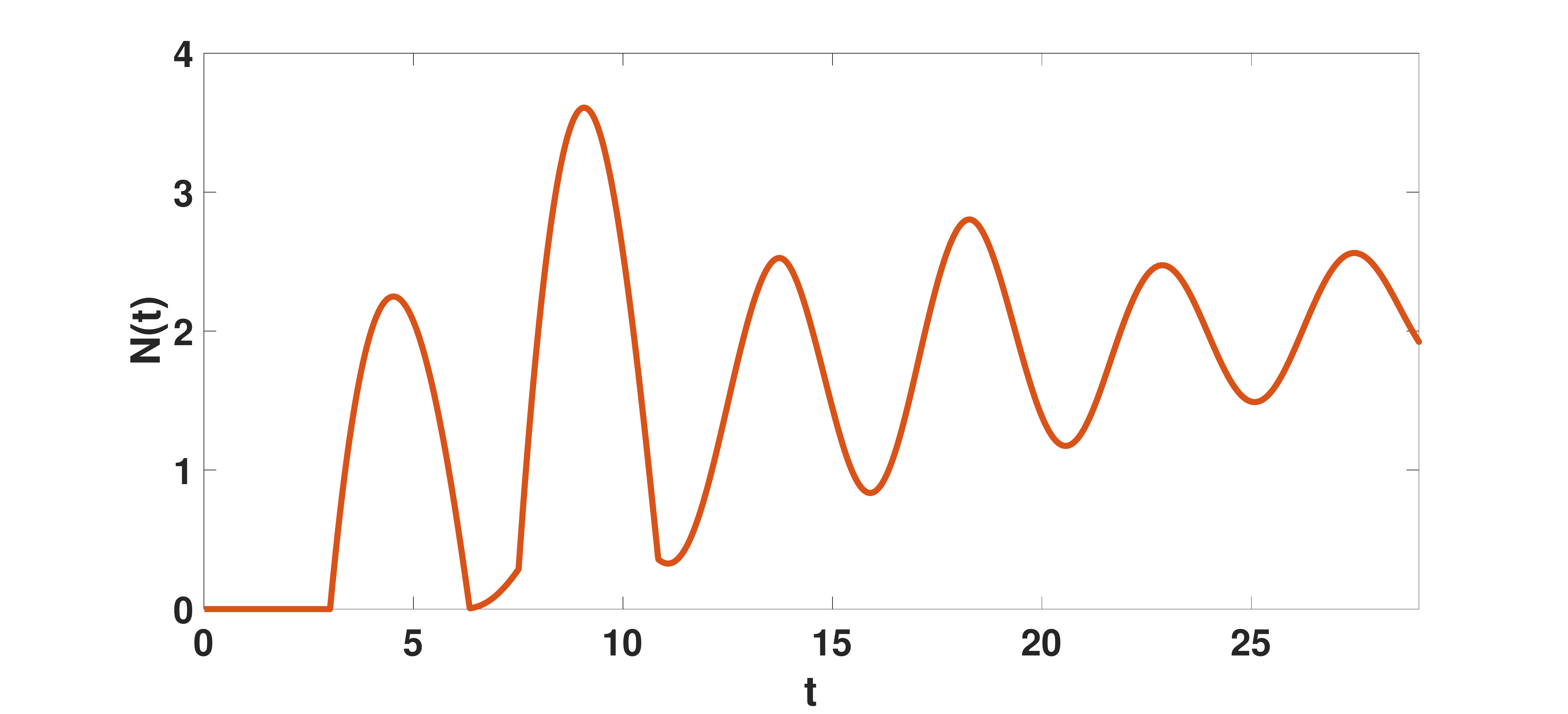}
		\includegraphics[scale=0.15]{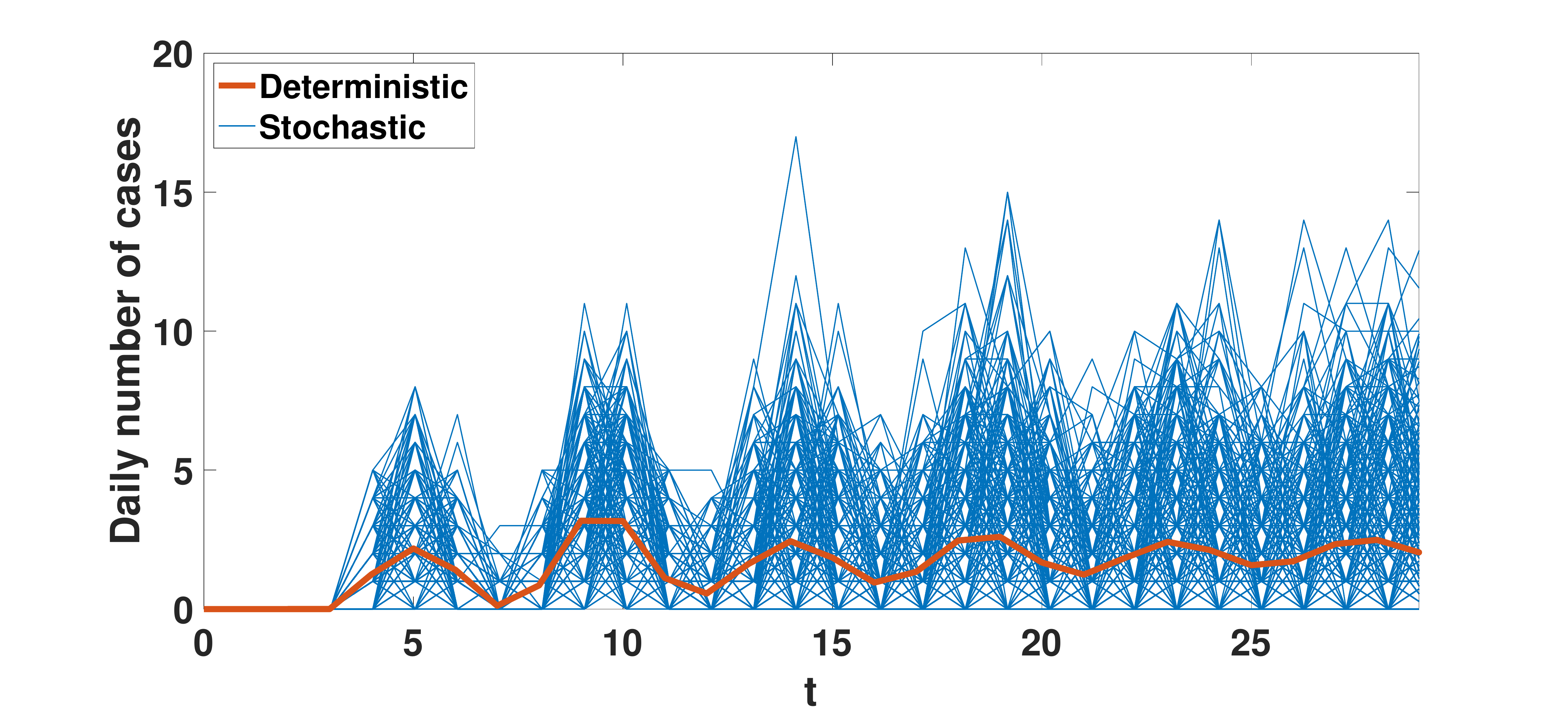}
	\end{center}
	\caption{\textit{On the left-hand side,  we plot the function $t \to N(t)$ solution of \eqref{4.3} with \eqref{2.4}. On the right-hand side, we plot the function $t \to \int_{t-1}^{t} N(s)ds$ (for $t=1,2, \ldots$) which corresponds to the daily number of cases obtained from by solving \eqref{4.3} with \eqref{2.4}, and we compare it with the daily number of cases obtained from $500$ runs of the IBM.}}\label{Fig30}
\end{figure}

\begin{figure}[H]
	\begin{center}
		\includegraphics[scale=0.15]{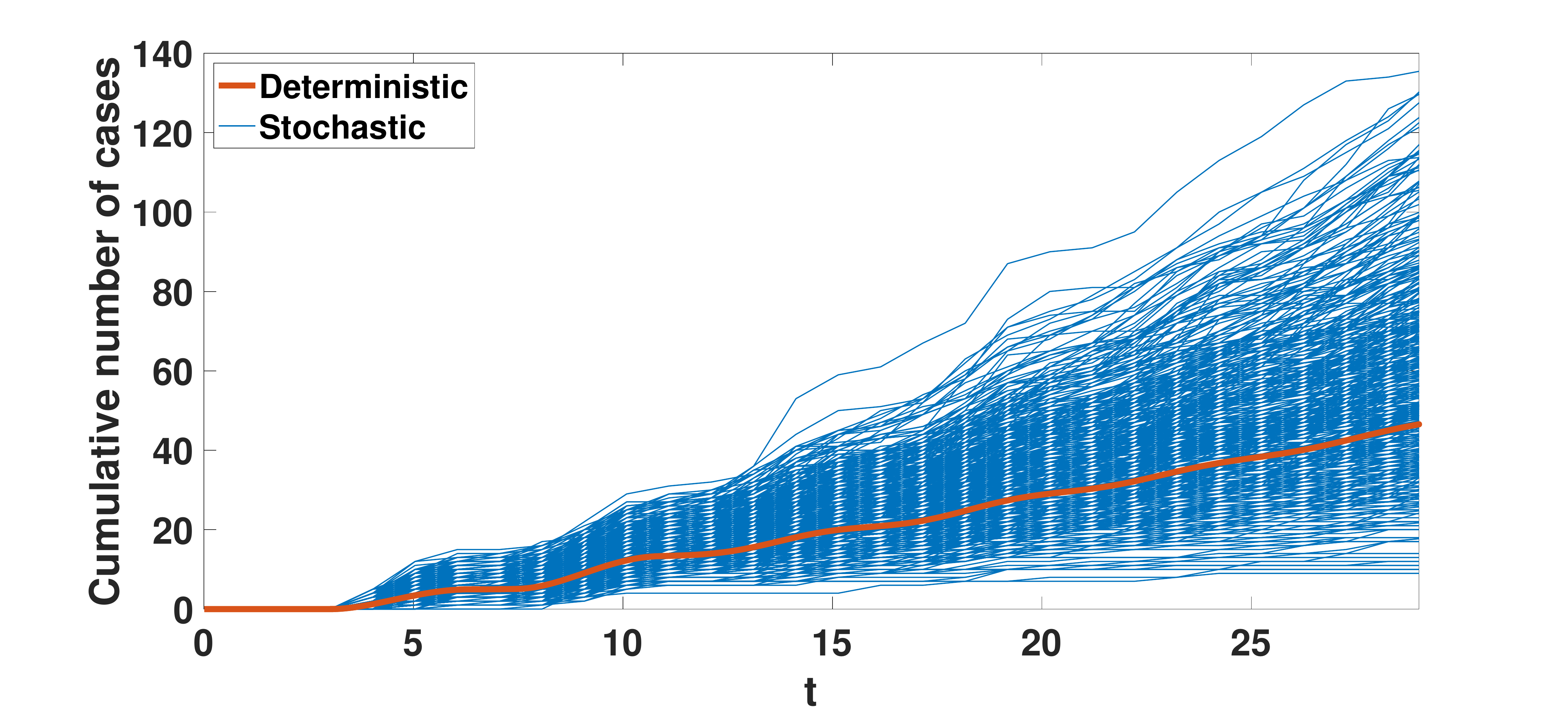}
		\includegraphics[scale=0.15]{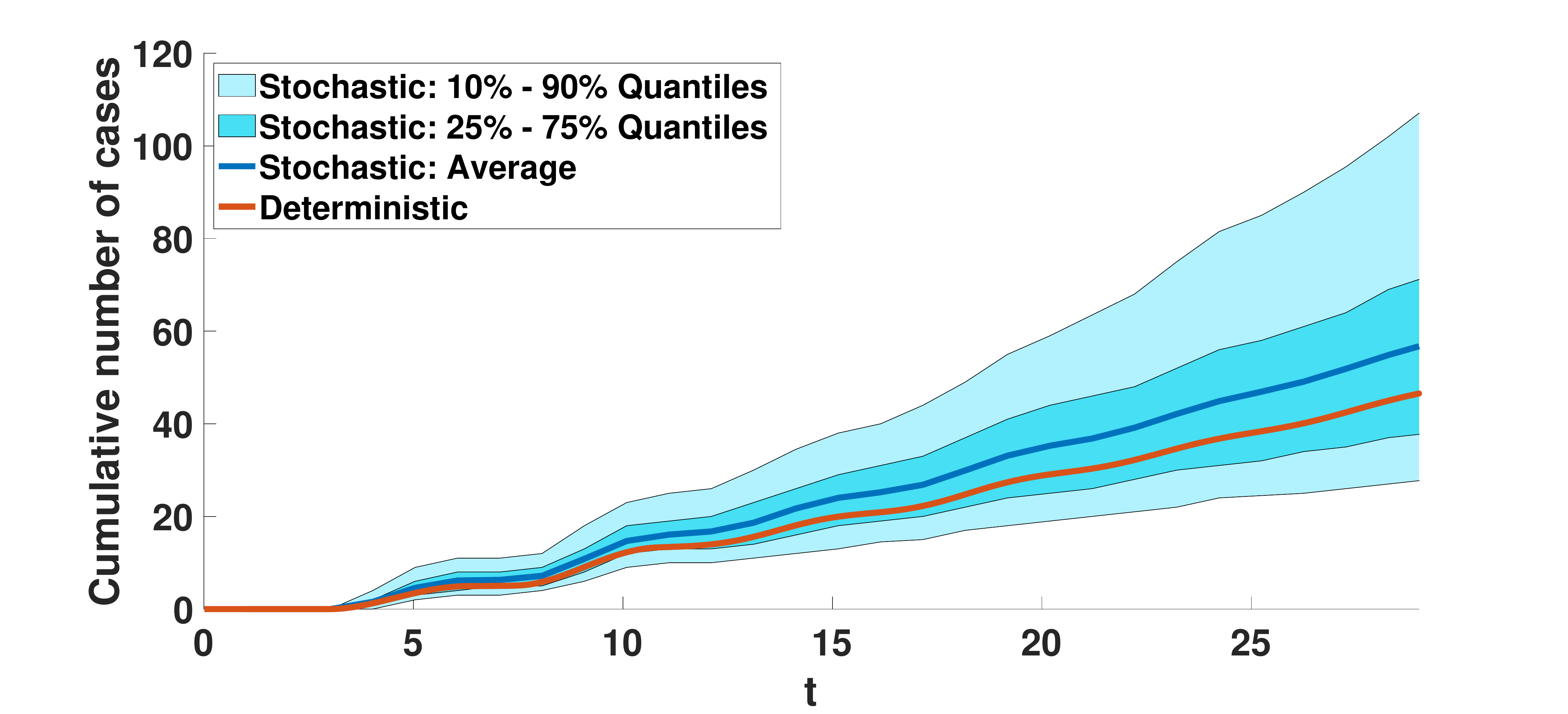}
	\end{center}
	\caption{\textit{On the left-hand side,  we plot the function $t \to \int_0^t N(s)ds$ (for $t=0,1,2, \ldots$) which corresponds to the cumulative number of cases obtained from by solving \eqref{4.3} with \eqref{2.4}, and we compare it with the cumulative number of cases obtained from $500$ runs of the IBM. On the right-hand side, we plot the average values of the $500$ runs obtained from the IBM as well as the quantiles ($10\%-90\%$ (light blue) and  $25\%-75\%$ (blue)).}}\label{Fig31}
\end{figure}
In Figures \ref{Fig32}-\ref{Fig34} we focus on the reconstruction of the daily reproduction number  $R_0(a)=\tau \, S_0 \,  e^{-  \nu \, a } \,  \beta\left(a\right)$. In Figure \ref{Fig32}, we focus on the reconstruction of the daily reproduction number  from deterministic simulations, while in Figures \ref{Fig33}-\ref{Fig34} we focus on the reconstruction of the daily reproduction number  from stochastic simulations. 
\begin{figure}[H]
	\begin{center}
		\includegraphics[scale=0.15]{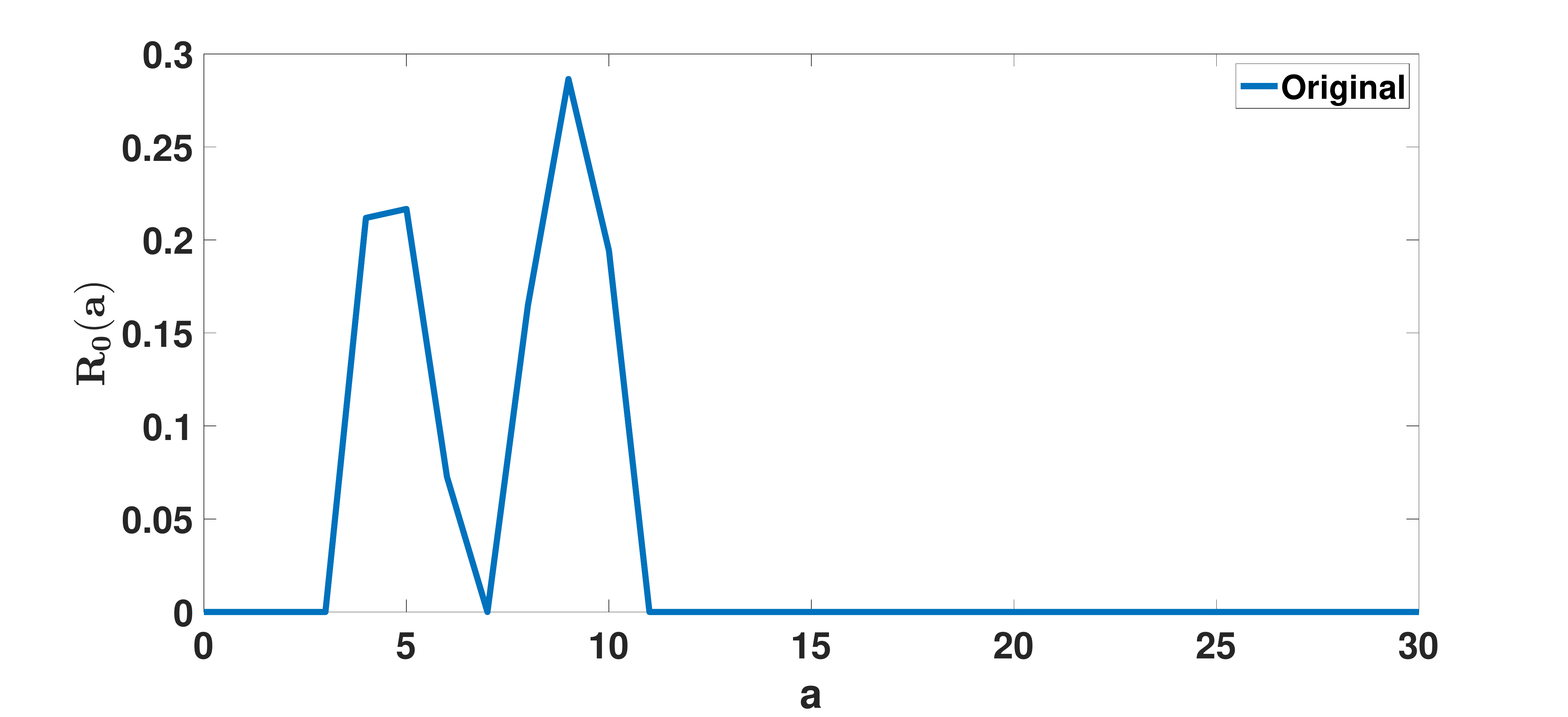}
		\includegraphics[scale=0.15]{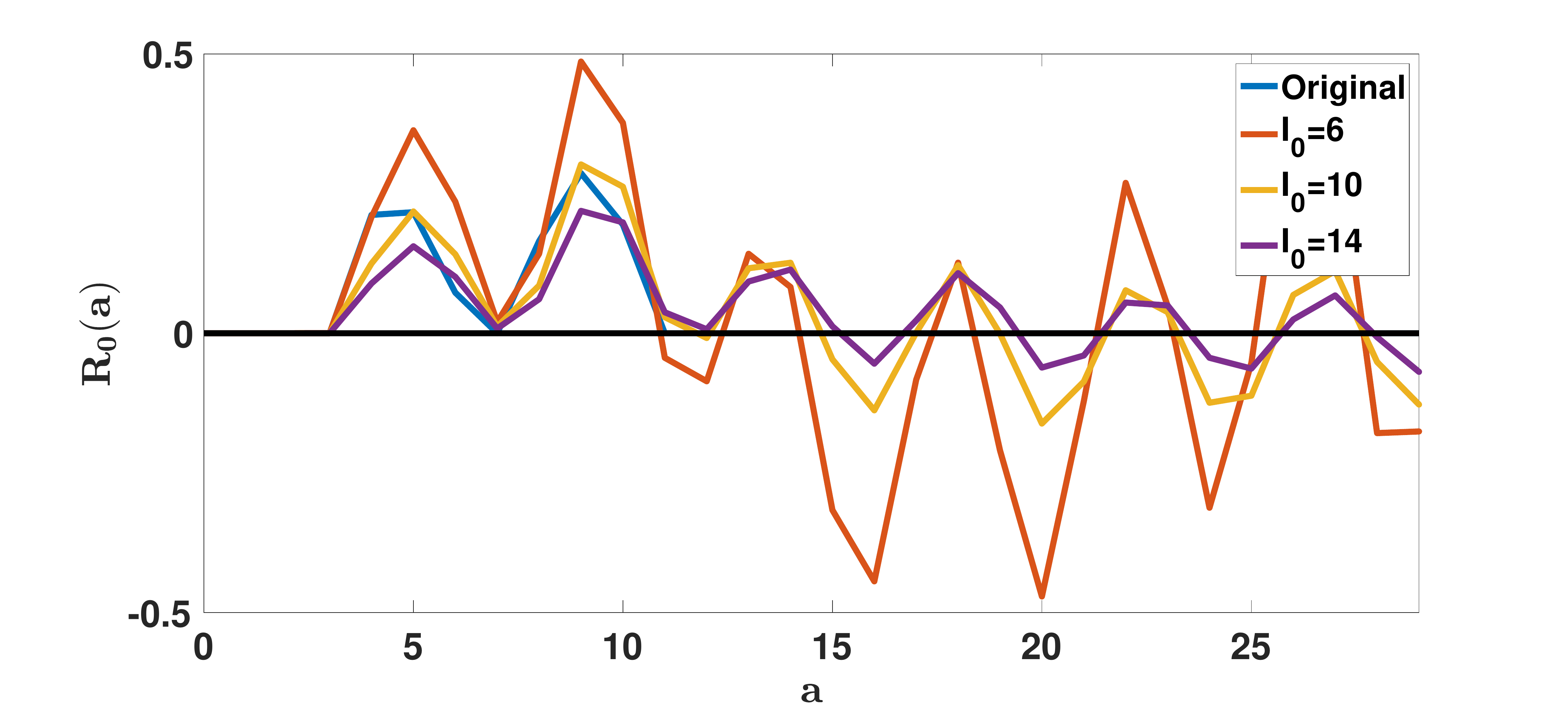}
	\end{center}
	\caption{\textit{On the left hand side, we plot the daily basic reproduction number by using the original formula $ R_0(a)$ with \eqref{7.3}.  On the right-hand side, we apply formula \eqref{6.5} to the flow of new infected obtained from the deterministic model. We vary $I_0=6, 10, 14$. The value $I_0=10$ corresponds to the value used for the simulation of the deterministic model.   The yellow curve gives the best visual fit, and the $R_0(a)$ becomes negative whenever $I_0$ becomes too small. }}\label{Fig32}
\end{figure}

\begin{figure}[H]
	\begin{center}
		\includegraphics[scale=0.15]{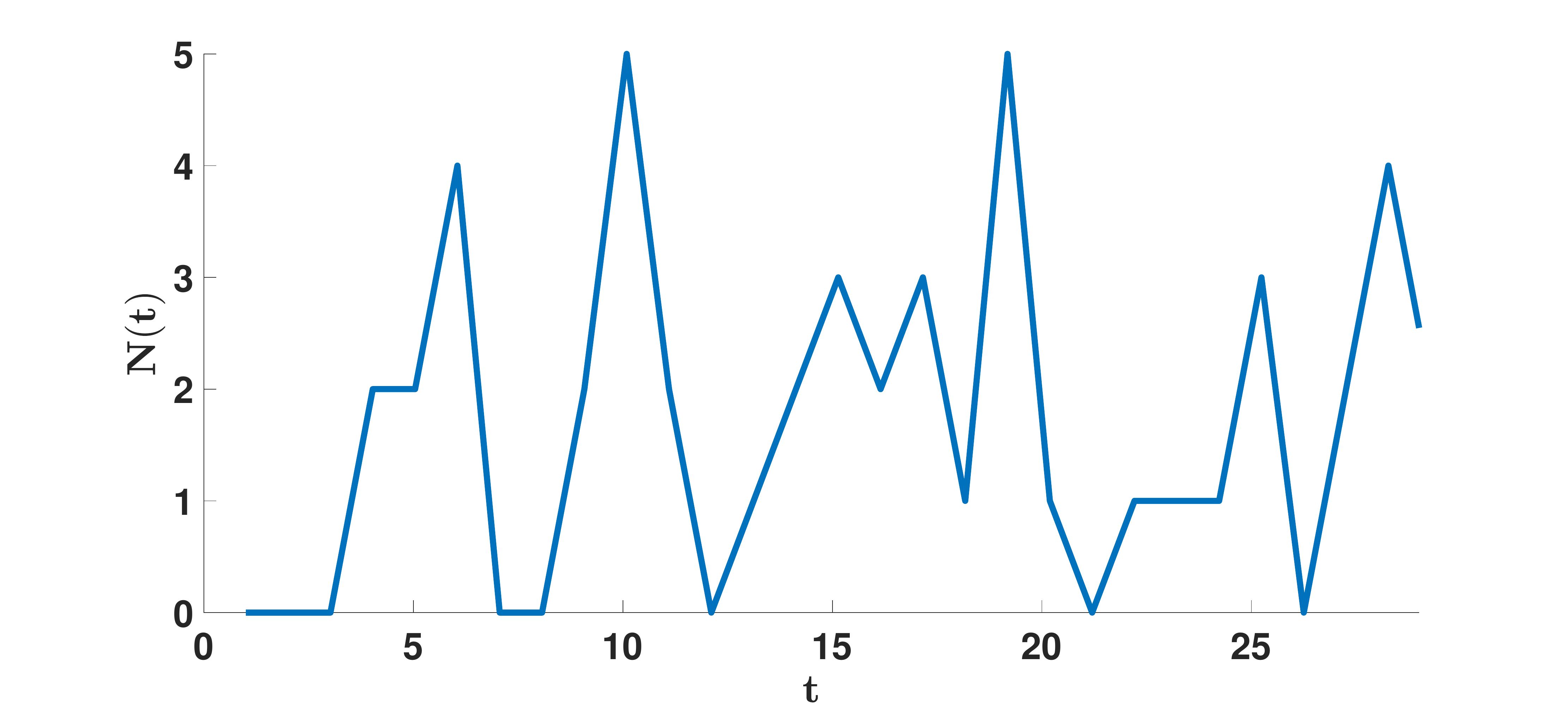}
		\includegraphics[scale=0.15]{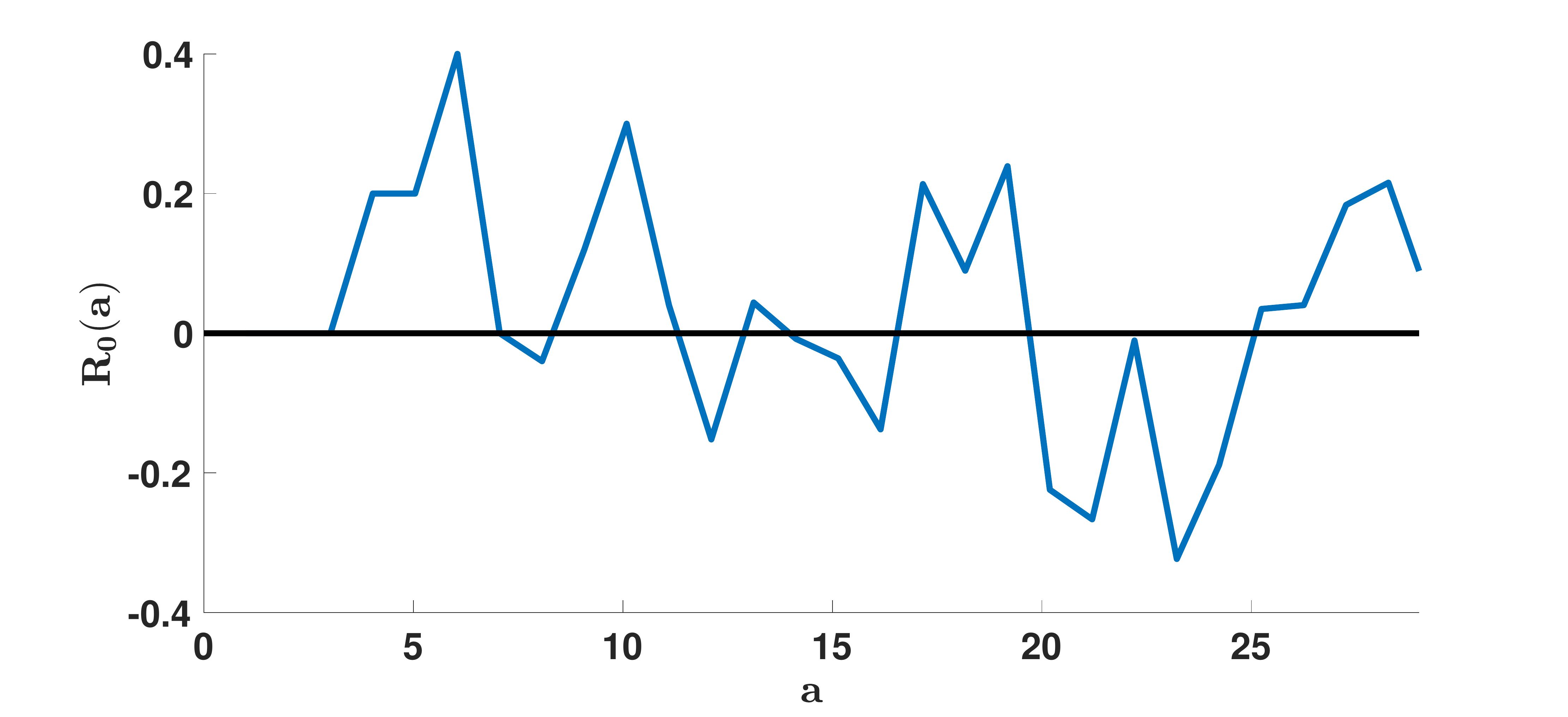}
	\end{center}
	\caption{\textit{On the left-hand side,  we plot the daily number of cases $t \to N(t) $ (for $t=0,1,2, \ldots$) obtained from a single run of the IBM.  On the right-hand side, we apply formula \eqref{6.5}  (with $I_0=10$) to  the daily number of cases of case obtained from the IBM.   }}\label{Fig33}
\end{figure}

\begin{figure}[H]
	\begin{center}
		\includegraphics[scale=0.15]{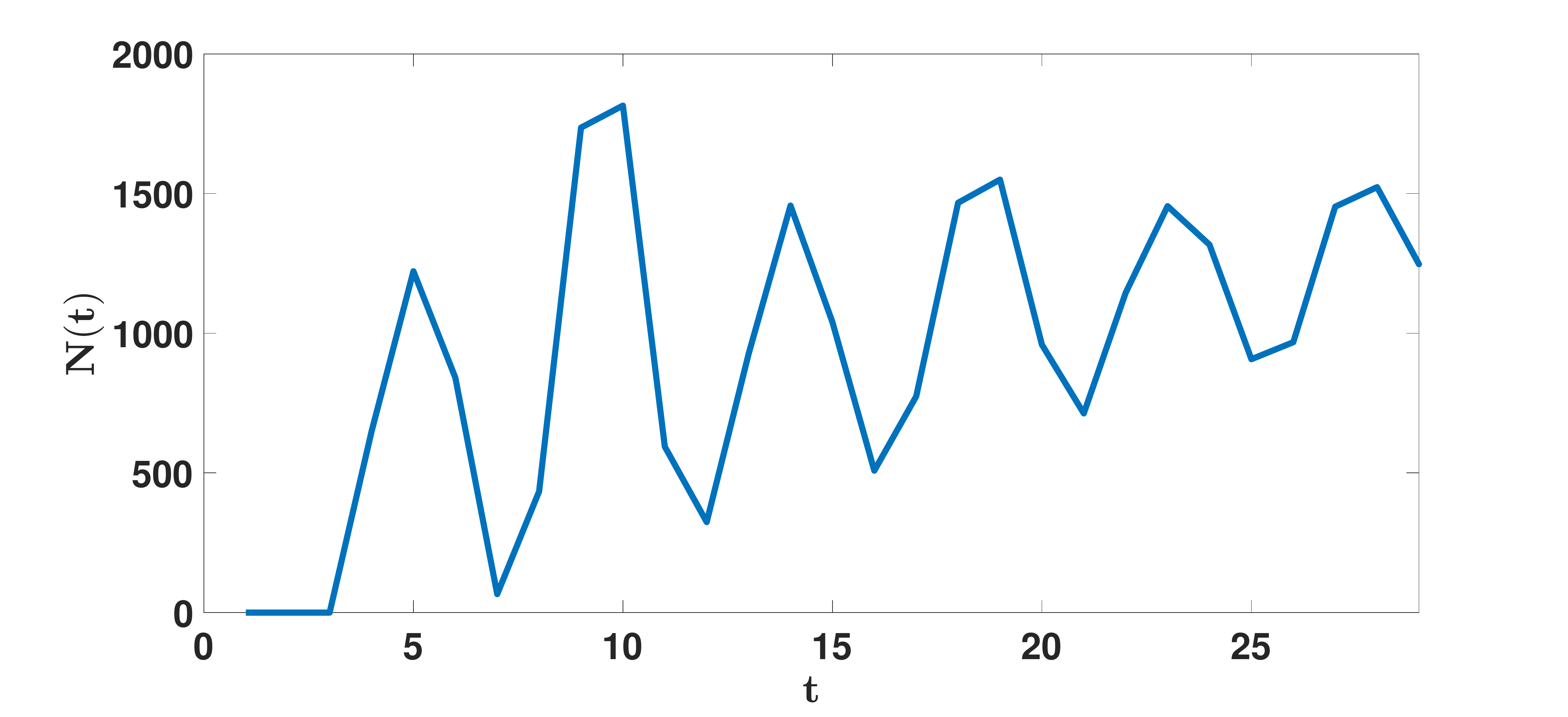}
		\includegraphics[scale=0.15]{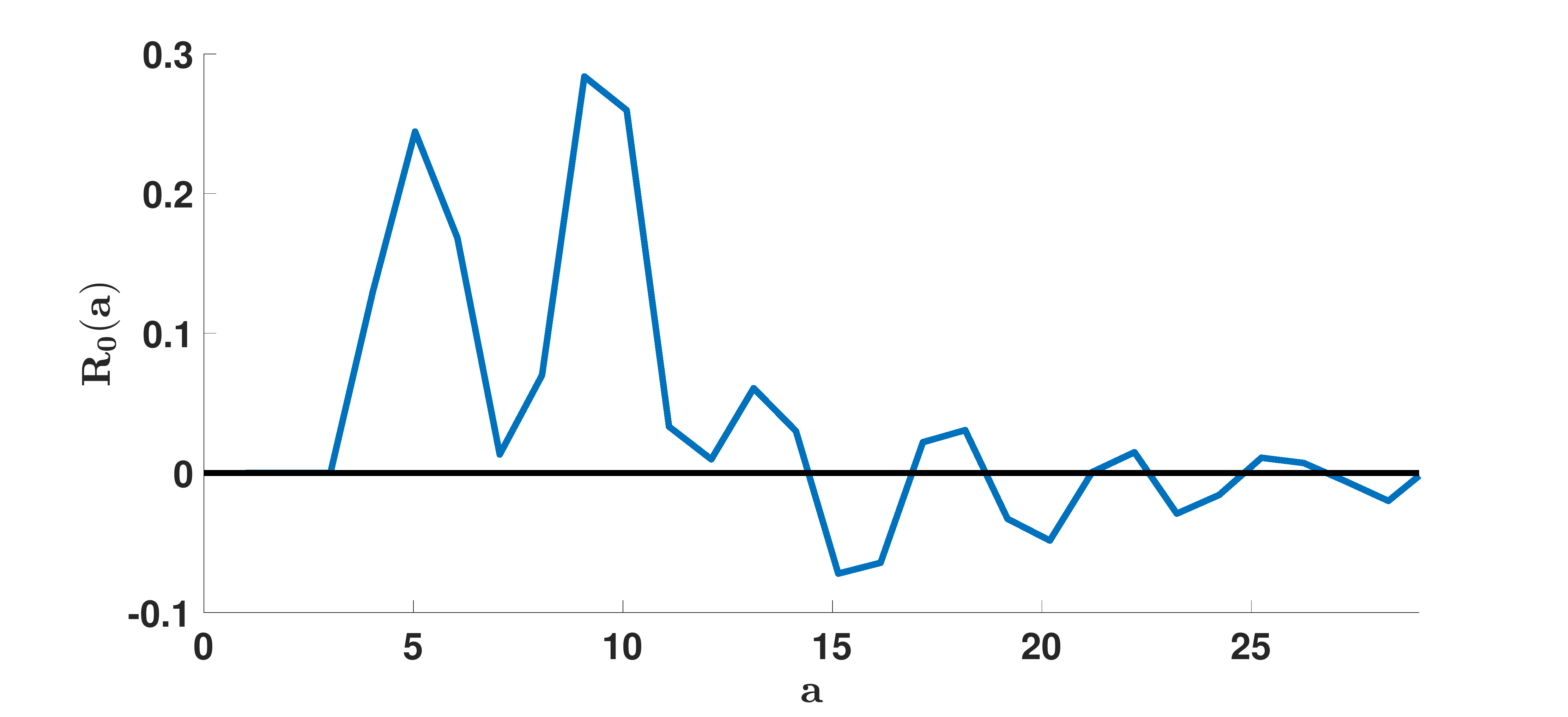}
	\end{center}
	\caption{\textit{On the left-hand side,  we plot the daily number of cases $t \to N(t)$ (for $t=0,1,2, \ldots$) obtained by summing the daily number of cases for $500$ runs of the IBM.  On the right-hand side, we apply formula \eqref{6.5}  (with $I_0=500 \times 10$) to the daily number of cases obtained from the IBM. }}\label{Fig34}
\end{figure}
\noindent \textbf{Simulations for $I_0=1000$:}
 \begin{figure}[H]
	\begin{center}
		\includegraphics[scale=0.15]{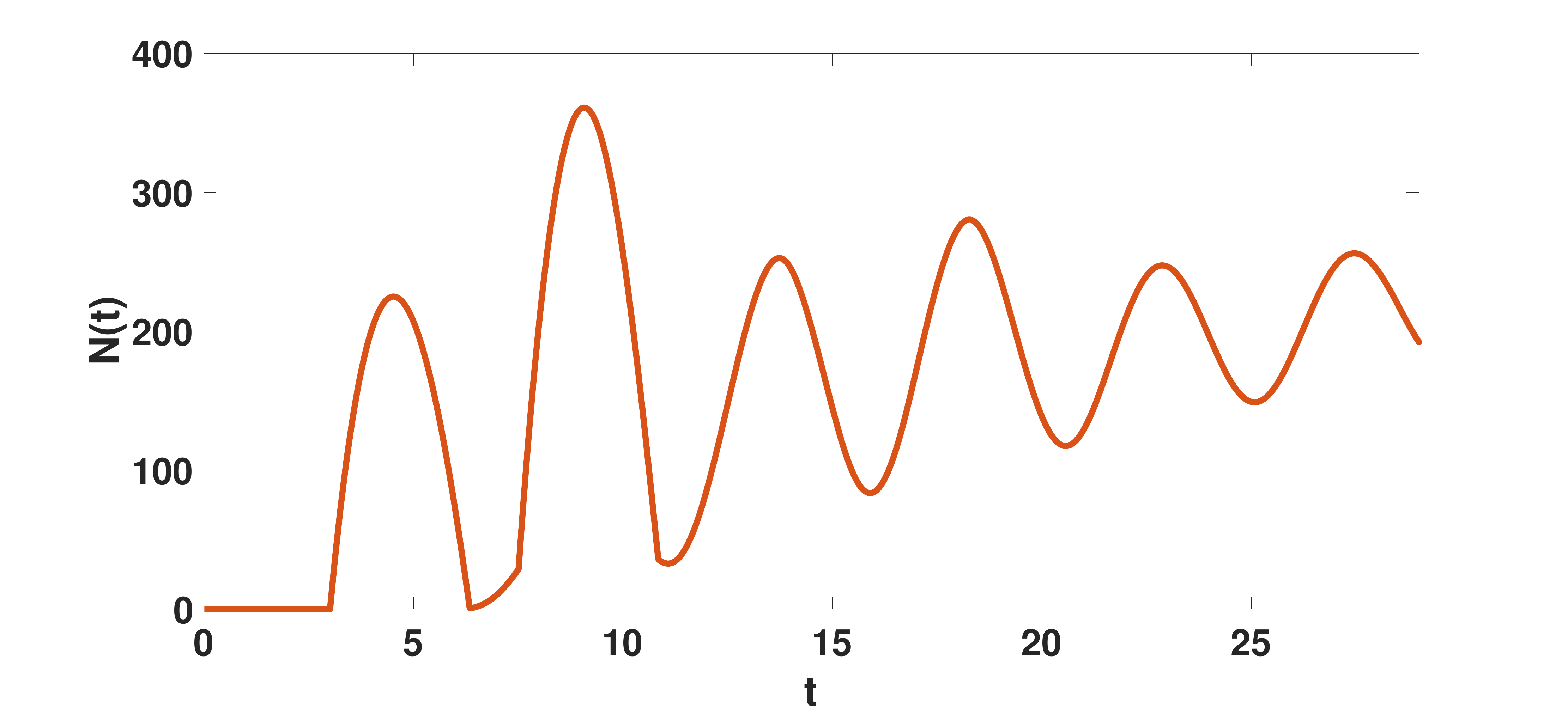}
		\includegraphics[scale=0.15]{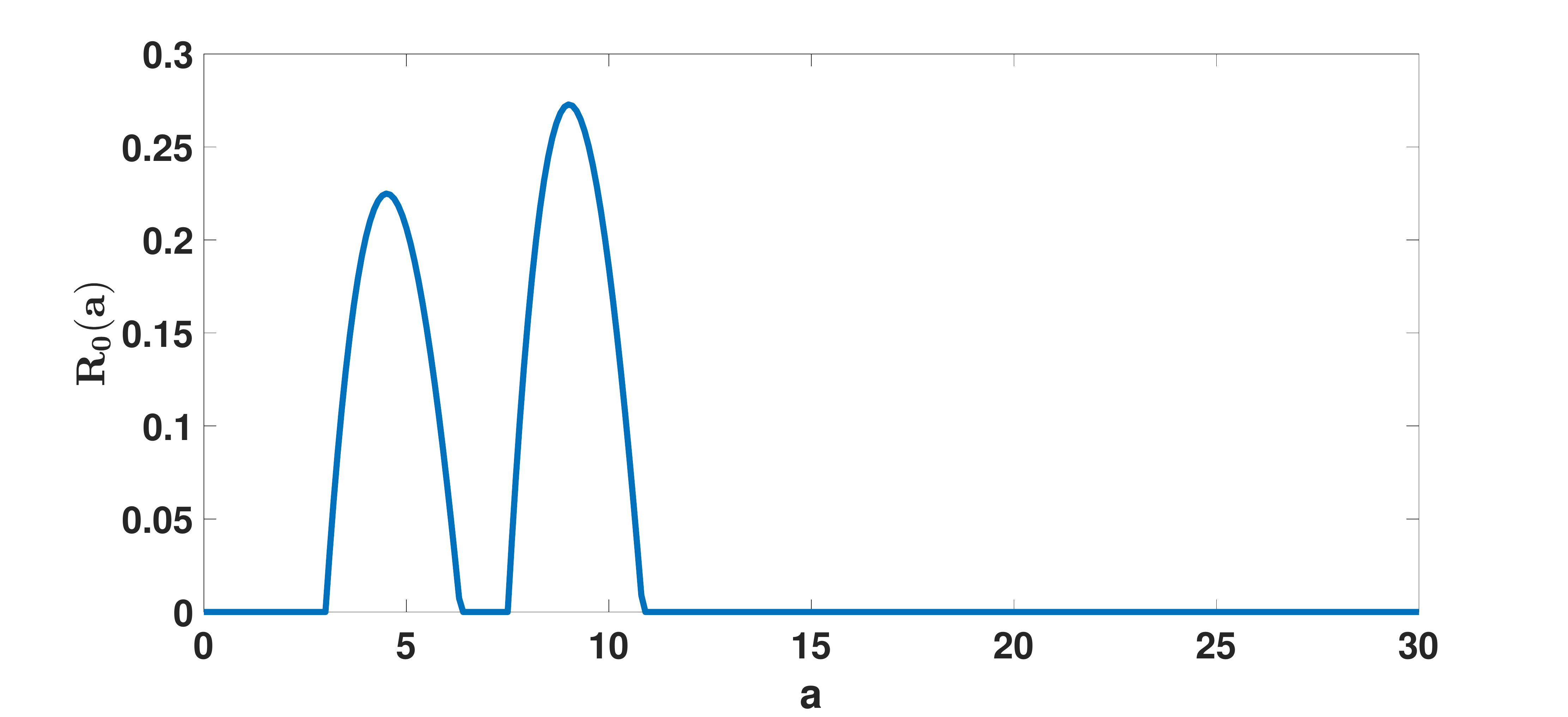}
	\end{center}
	\caption{\textit{On the left-hand side,  we plot the function $t \to N(t)$ solution of \eqref{4.3} with \eqref{2.4}. On the right-hand side, we plot the function $t \to \int_{t-1}^{t} N(s)ds$ (for $t=1,2, \ldots$) which corresponds to the daily number of cases obtained from by solving \eqref{4.3} with \eqref{2.4}, and we compare it with the daily number of cases obtained from $500$ runs of the IBM.}}\label{Fig35}
\end{figure}

\begin{figure}[H]
	\begin{center}
		\includegraphics[scale=0.15]{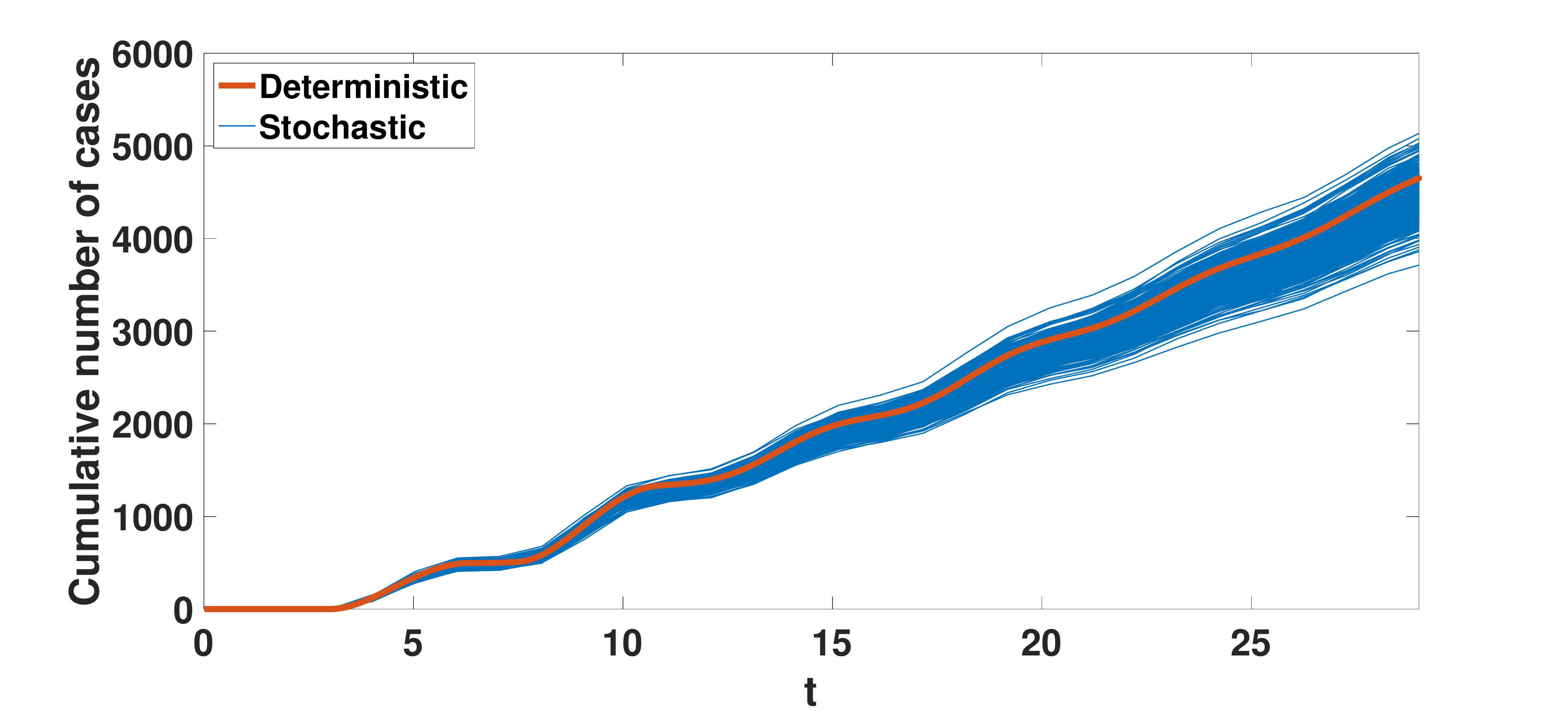}
		\includegraphics[scale=0.15]{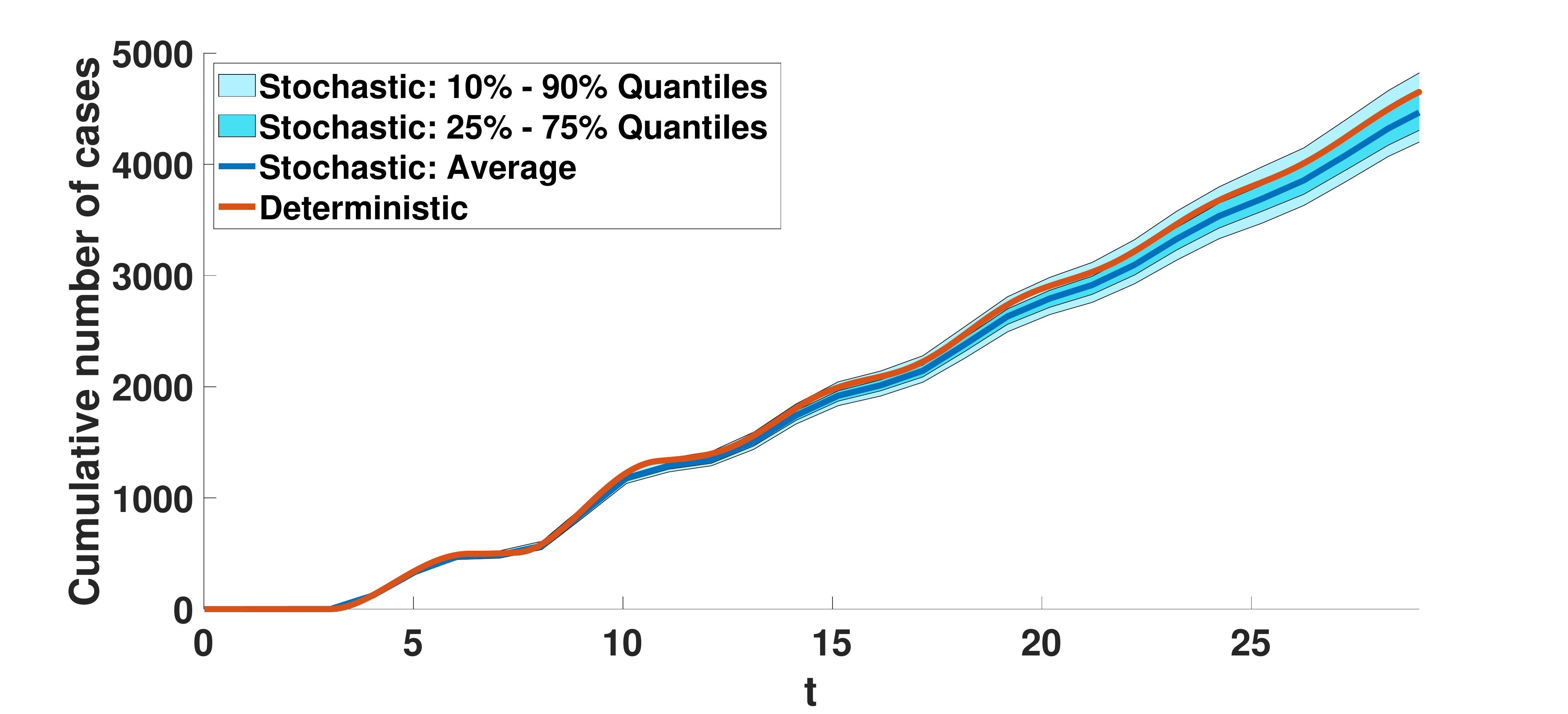}
	\end{center}
	\caption{\textit{On the left-hand side,  we plot the function $t \to \int_0^t N(s)ds$ (for $t=0,1,2, \ldots$) which corresponds to the cumulative number of cases obtained from by solving \eqref{4.3} with \eqref{2.4}, and we compare it with the cumulative number of cases obtained from $500$ runs of the IBM. On the right-hand side, we plot the average values of the $500$ runs obtained from the IBM as well as the quantiles ($10\%-90\%$ (light blue) and  $25\%-75\%$ (blue)).}}\label{Fig36}
\end{figure}

In Figures \ref{Fig37}-\ref{Fig39} we focus on the reconstruction of the daily reproduction number  $R_0(a)=\tau \, S_0 \,  e^{-  \nu \, a } \,  \beta\left(a\right)$. In Figure \ref{Fig37}, we focus on the reconstruction of the daily reproduction number  from deterministic simulations, while in Figures \ref{Fig38}-\ref{Fig39} we focus on the reconstruction of the daily reproduction number  from stochastic simulations. 
\begin{figure}[H]
	\begin{center}
		\includegraphics[scale=0.15]{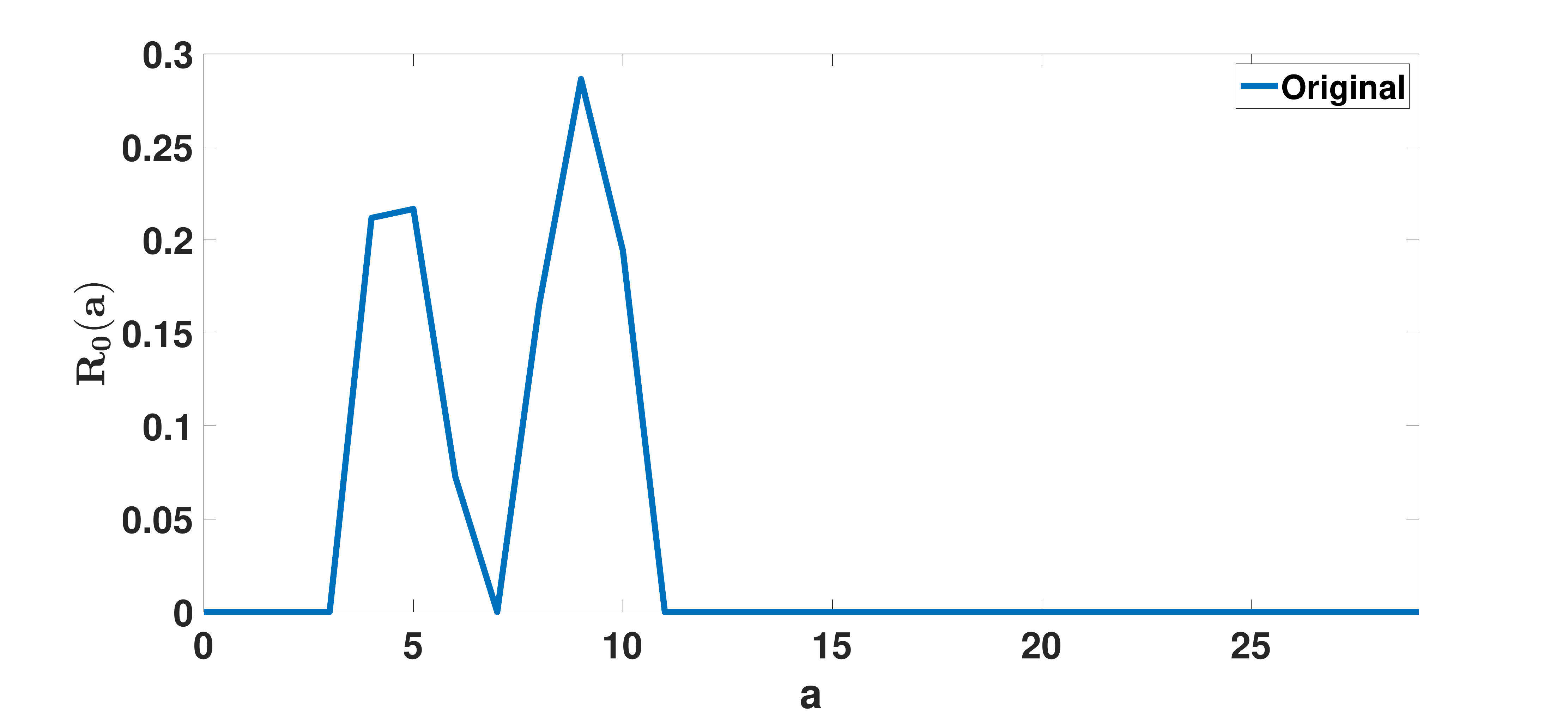}
		\includegraphics[scale=0.15]{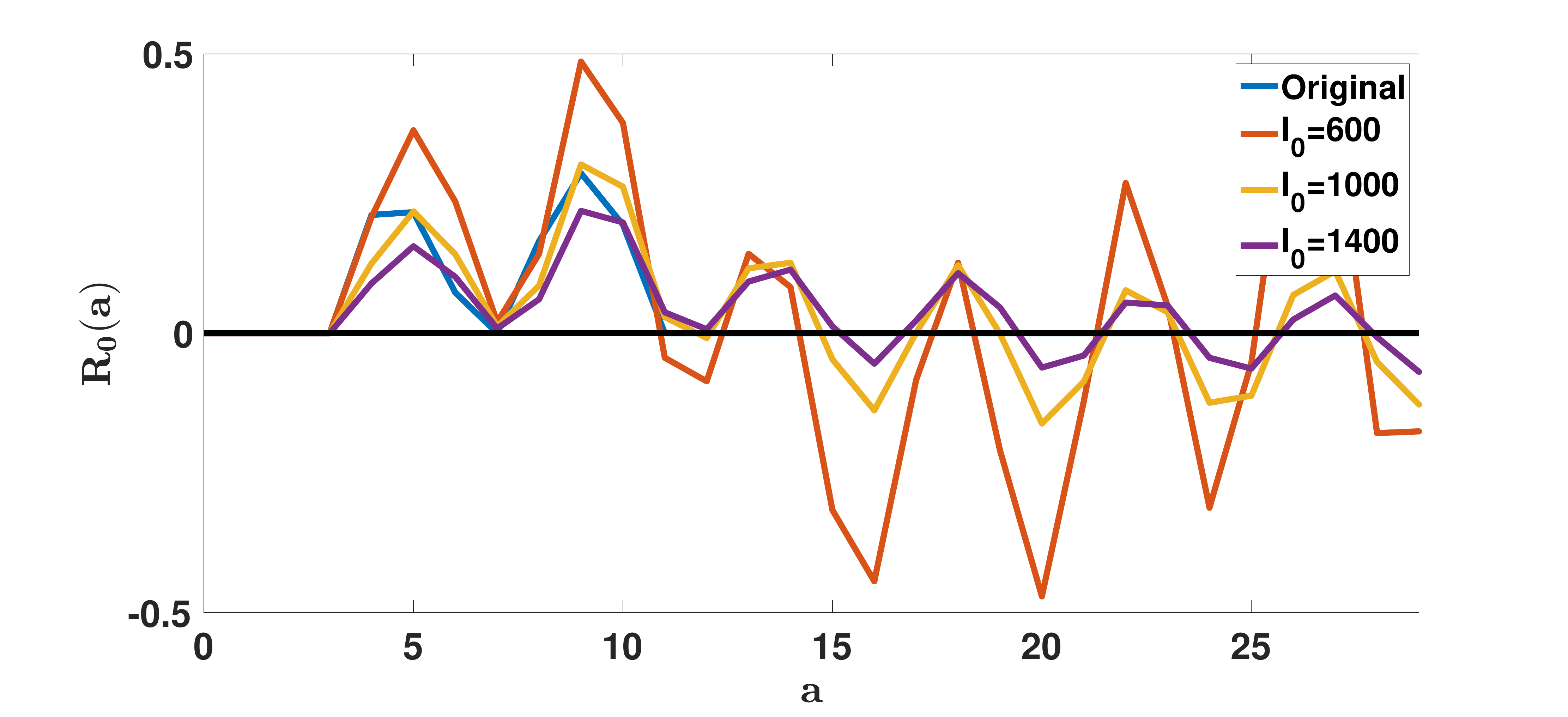}
	\end{center}
	\caption{\textit{On the left hand side, we plot the daily basic reproduction number by using the original formula $ R_0(a)$ with \eqref{7.3}.  On the right-hand side, we apply formula \eqref{6.5} to the flow of new infected obtained from the deterministic model. We vary $I_0=600, 1000, 1400$. The value $I_0=1000$ corresponds to the value used for the simulation of the deterministic model.   The yellow curve gives the best visual fit, and the $R_0(a)$ becomes negative whenever $I_0$ becomes too small. }}\label{Fig37}
\end{figure}

\begin{figure}[H]
	\begin{center}
		\includegraphics[scale=0.15]{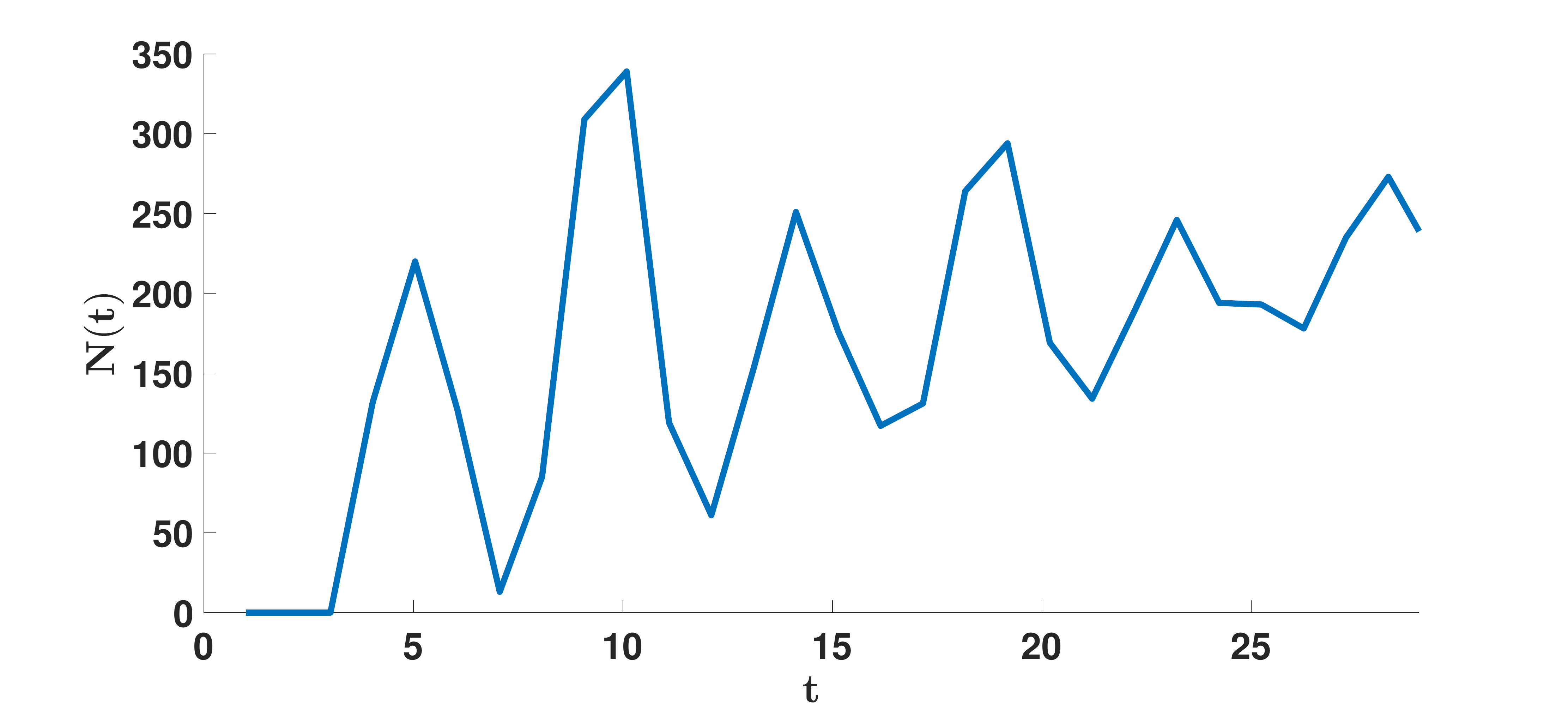}
		\includegraphics[scale=0.15]{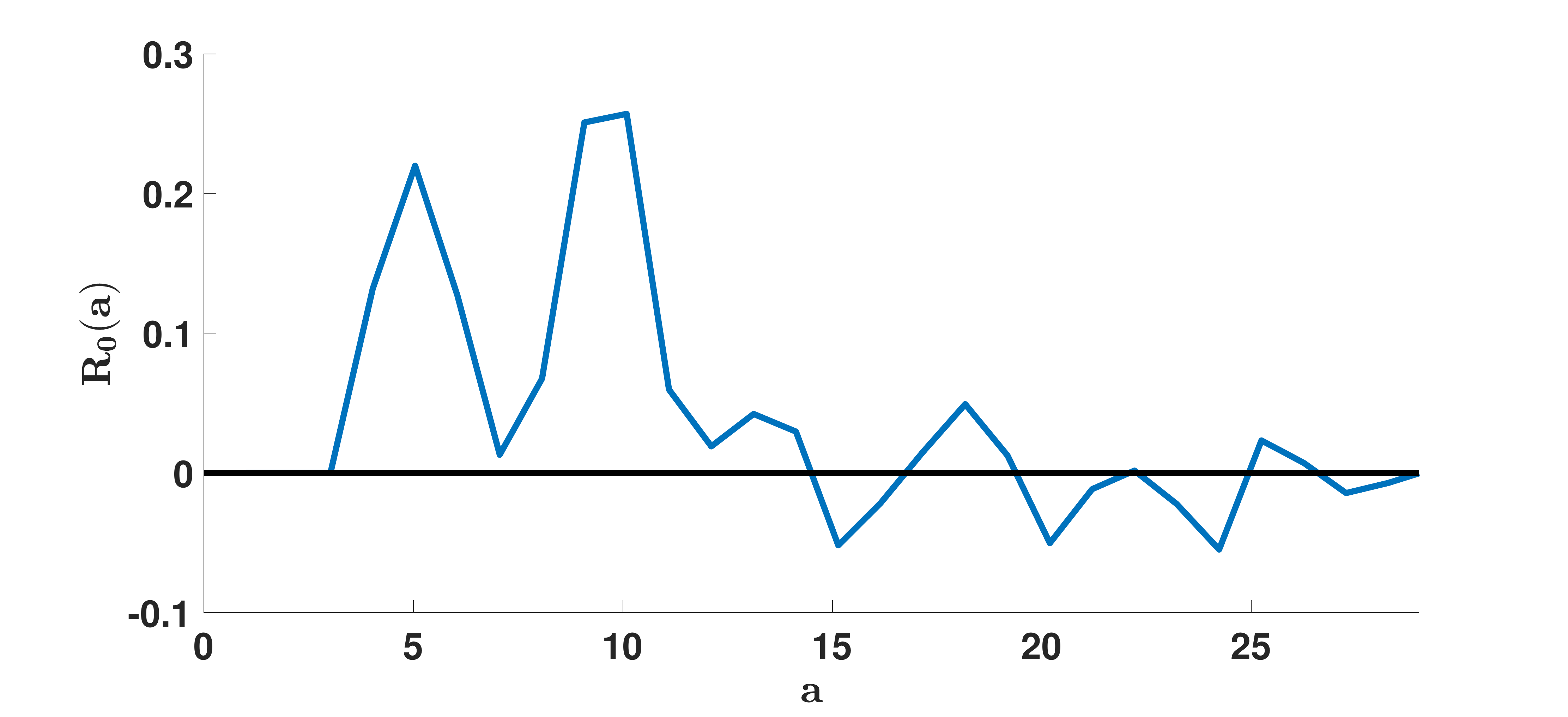}
	\end{center}
	\caption{\textit{On the left-hand side,  we plot the daily number of cases $t \to N(t) $ (for $t=0,1,2, \ldots$) obtained from a single run of the IBM.  On the right-hand side, we apply formula \eqref{6.5}  (with $I_0=1000$) to  the daily number of cases of case obtained from the IBM.   }}\label{Fig38}
\end{figure}

\begin{figure}[H]
	\begin{center}
		\includegraphics[scale=0.15]{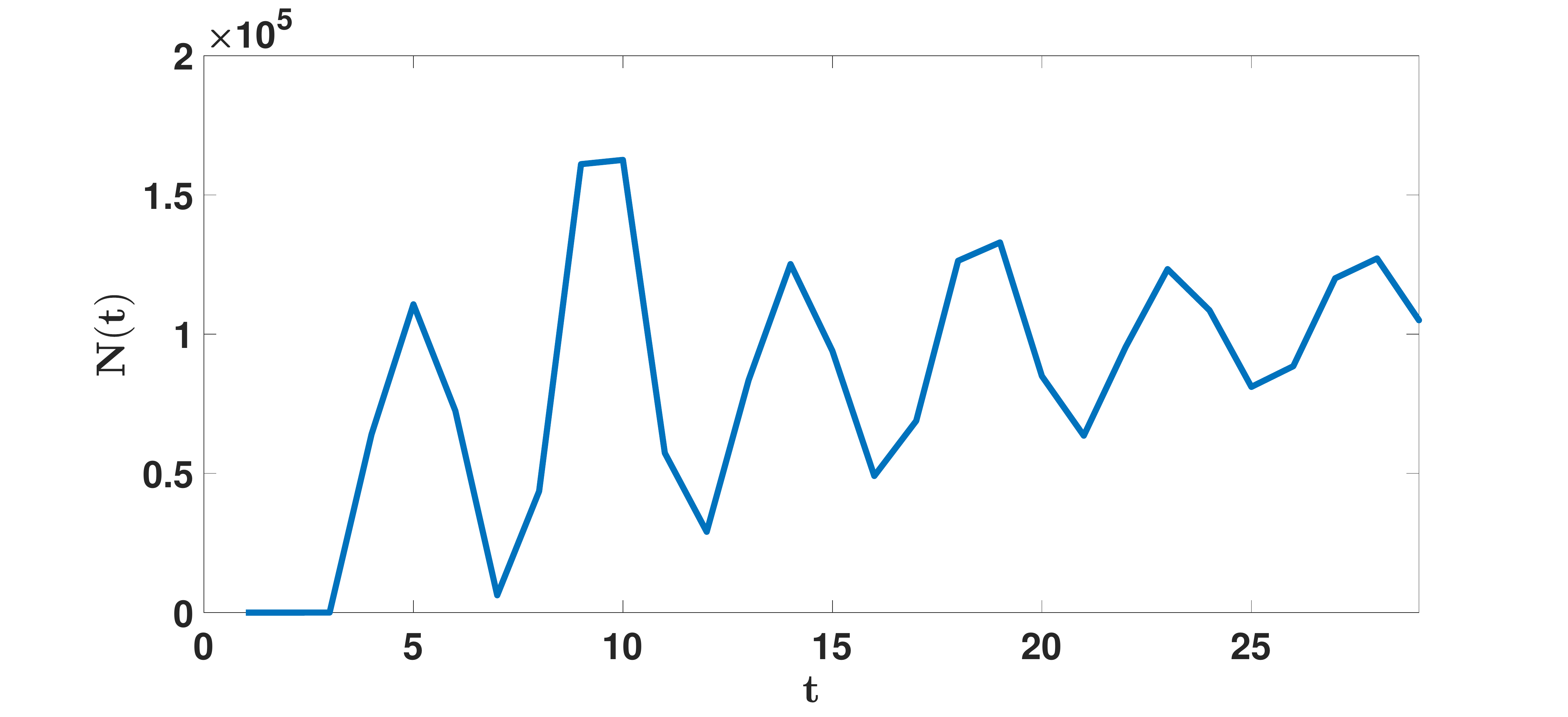}
		\includegraphics[scale=0.15]{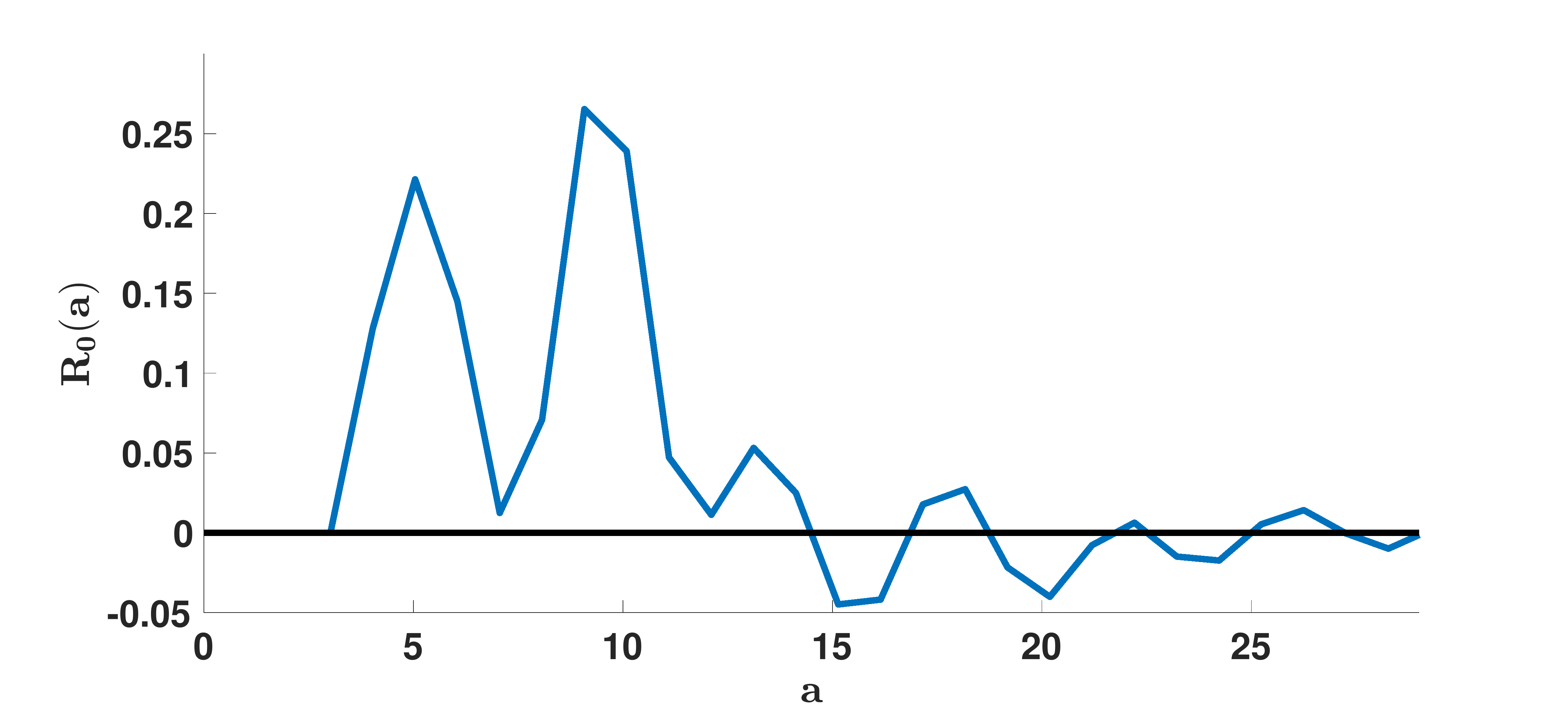}
	\end{center}
	\caption{\textit{On the left-hand side,  we plot the daily number of cases $t \to N(t)$ (for $t=0,1,2, \ldots$) obtained by summing the daily number of cases for $500$ runs of the IBM.  On the right-hand side, we apply formula \eqref{6.5}  (with $I_0=500 \times 1000$) to the daily number of cases obtained from the IBM. }}\label{Fig39}
\end{figure}

\section{Application to SARS-CoV-1}
\label{Section8}
In practice, the  Kermack-McKendrick model starting from a Dirac mass means that the epidemic starts from a single patient 	at time $t_0$ (whenever $I_0=1$) or from a group of $I_0$ infected patients all with the same age of infection $a=0$ at time $t_0$. This assumption corresponds to the standard conception of a cluster in epidemiology. An example of such a cluster  is obtained \cite{CDC1} for the SARS-CoV-1 epidemic in Singapore in 2003. The cluster is represented by a network of contact between individuals in Figure \ref{Fig40}. 
\begin{figure}[H]
	\begin{center}
		\includegraphics[scale=0.6]{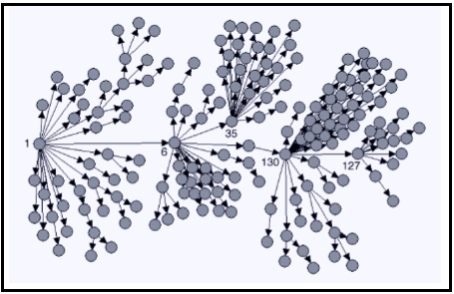}
		\includegraphics[scale=0.15]{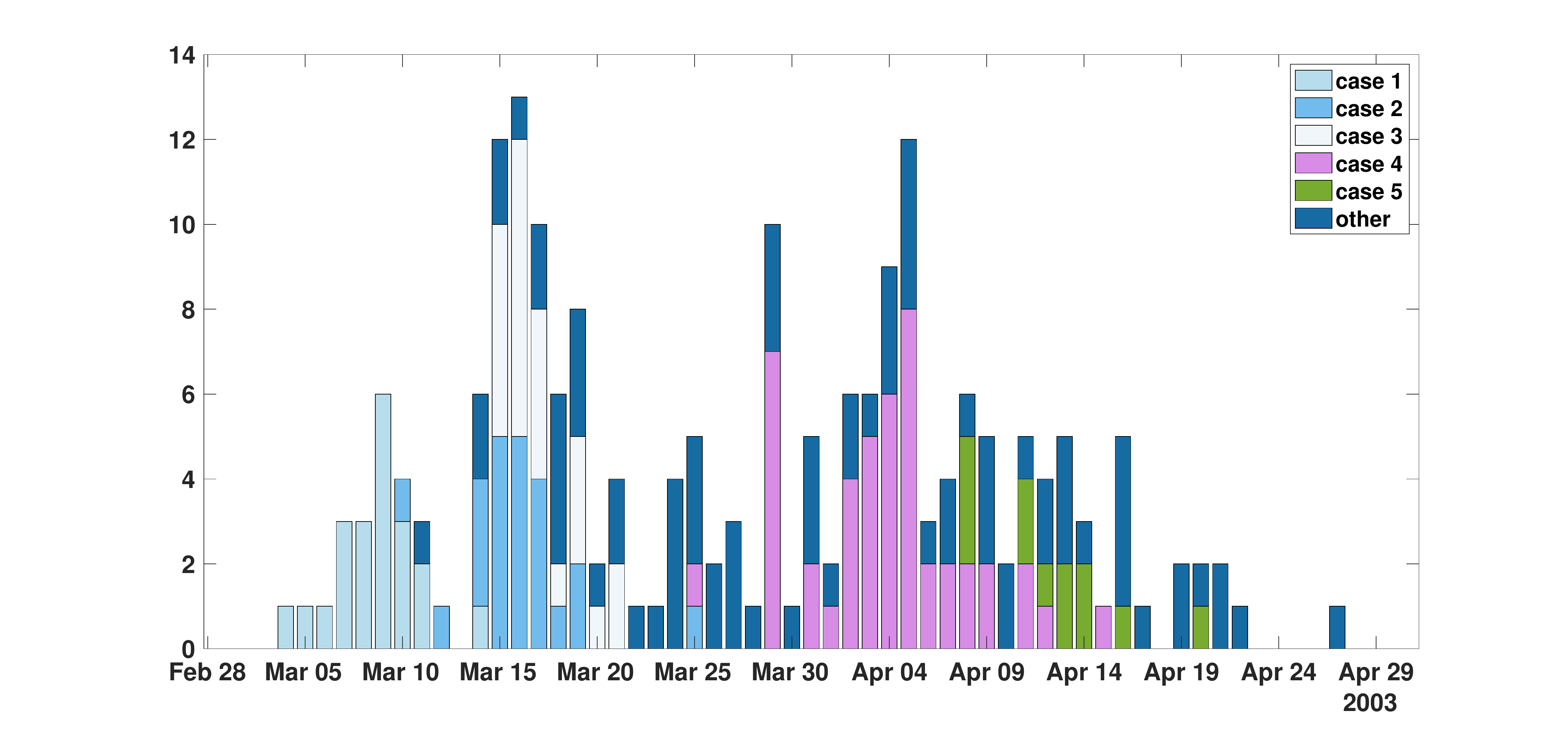}
	\end{center}
	\caption{\textit{On  the left-hand side, we plot the contact network of the five super spreader cases in the SARS epidemic in Singapore in 2003 \cite{CDC1}. The super spreaders are patient 1, patient 6, patient 35, patient 130 and patient 127.  On the right-hand side, we plot the daily reported cases from Singapore for the epidemic of SARS in 2003. 
	Case 1 generated 21 cases and 3 suspected cases, case 2 generated 23 cases and 5 suspected cases, case 3 generated 23 cases and 18 suspected cases, case 4 generated 40 cases and 22 suspected cases, case 5 generated 15 cases and 0 suspected cases \cite{CDC1}. The cases 1,2,3,4,5 correspond respectively to the patients 1, 6, 35, 130 and 127. }}\label{Fig40}
\end{figure}
Figure \ref{Fig40} presents the time series of reported cases by source of infection and date of fever onset. In Figure \ref{Fig41} we present three representations of these data in continuous time: as a step function, regularized by Gaussian average and rolling weekly average. In Figure \ref{Fig42} we apply the continuous-time model to the rolling weekly regularization of the data. Similar to the reconstruction of $R_0(a)$ presented in Figures 
 \ref{Fig19}-\ref{Fig21}, \ref{Fig24}-\ref{Fig26}, \ref{Fig32}-\ref{Fig34}, \ref{Fig37}-\ref{Fig39}, the basic reproduction number $R_0(a)$ becomes negative after a given age. Our interpretation is that the data are far from perfect and involves sampling errors and probably a large number of undetected cases. 
The fact that the transmission rate is subject to variations in time could also explain this negativity. In Figure \ref{Fig43} we apply the discrete model \eqref{1.6} to the original data for different values of $I_0$. Finally, in Figure \ref{Fig44}, we transform the data by taking advantage of the information on the source of infection given in \cite{CDC1}. We fix an incubation period of $5$ days, which corresponds to the average incubation period reported in \cite{CDC1}. Then we shift all secondary cases produced by the six sources identified in the article \cite{CDC1} to the same origin, as if all cases had been produced by the same cluster of six individuals. We present the data on the left-hand side of Figure \ref{Fig44} and apply the method to obtain $R_0(a)$ for parameters $I_0=30$, $I_0=50$, $I_0=100$.

\begin{figure}[H]
	\begin{center}
		\includegraphics[scale=0.15]{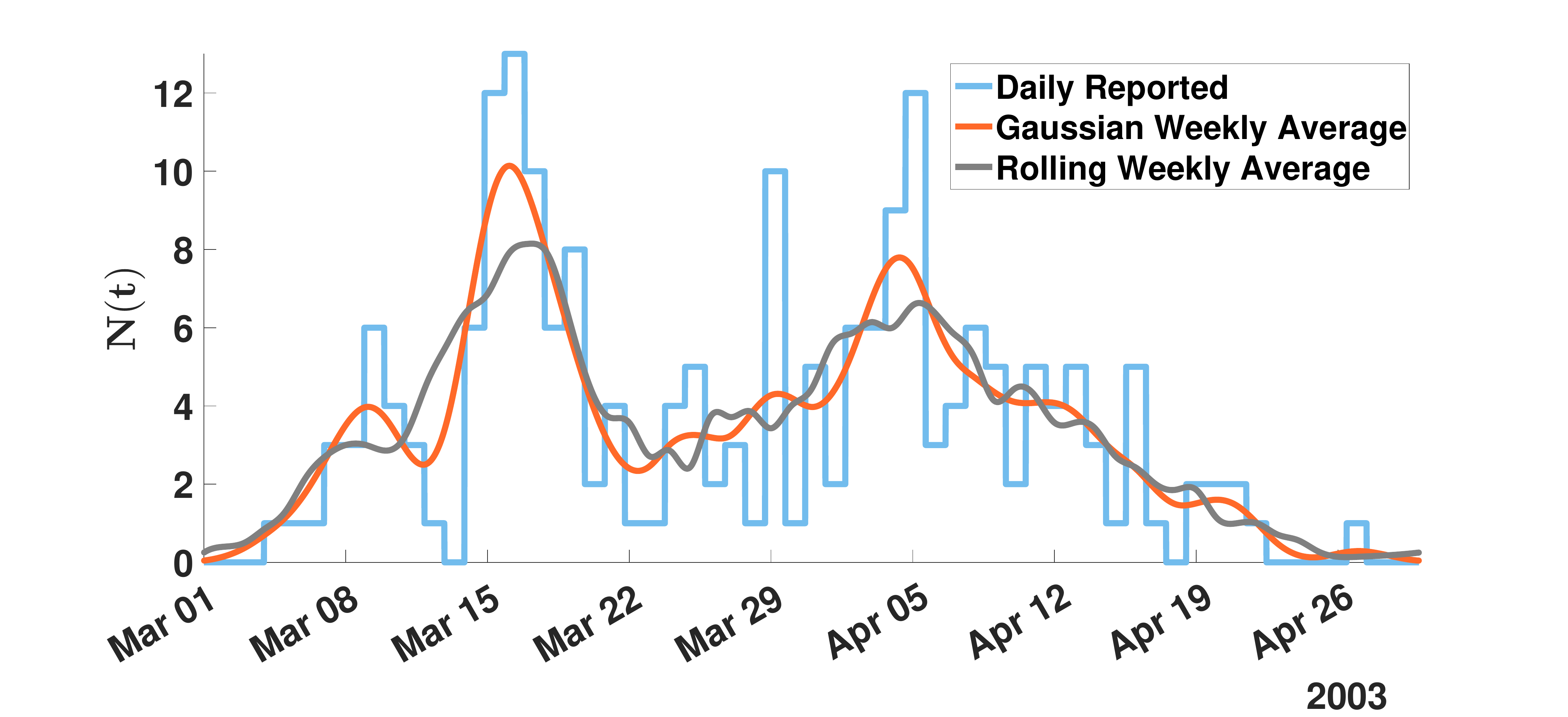}
	\end{center}
	\caption{\textit{Regularizations of the daily cases data from the SARS-CoV-1 outbreak in Singapore \cite{CDC1}. The applications in Figure \ref{Fig42} are done with the ``Rolling Weekly'' regularization. }}\label{Fig41}
\end{figure}
\begin{figure}[H]
	\begin{center}
		\includegraphics[scale=0.15]{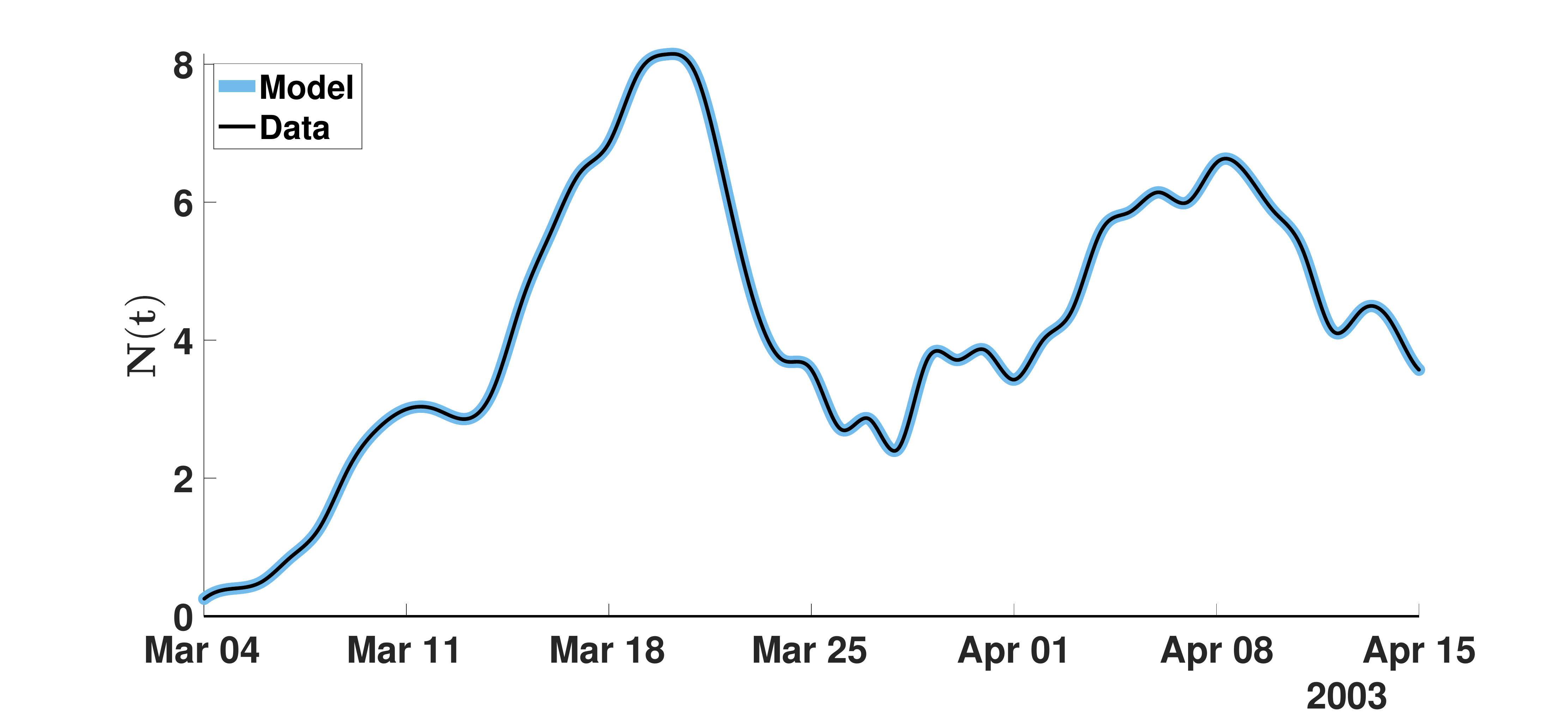}
		\includegraphics[scale=0.15]{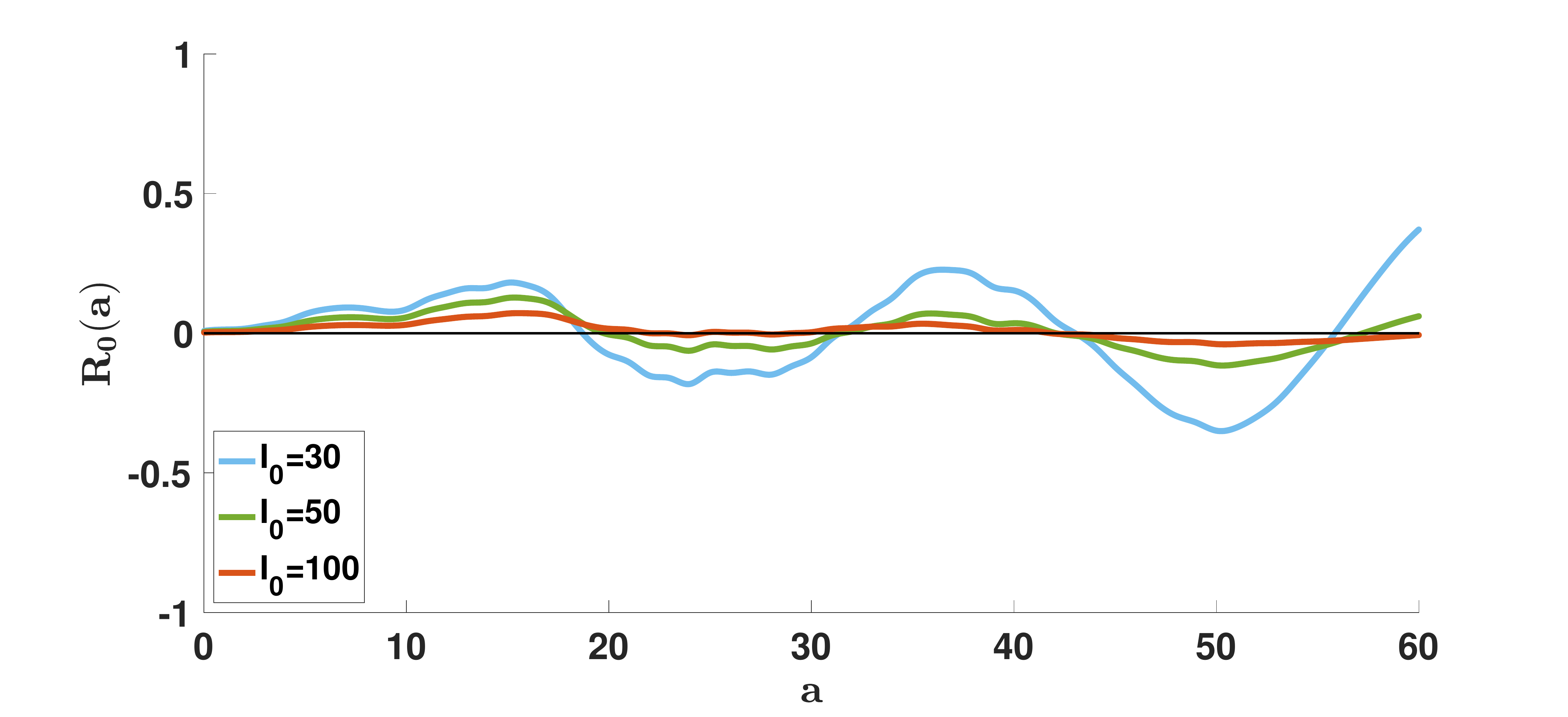}
	\end{center}
	\caption{\textit{Left: Regularized data of the SARS-CoV-1 outbreak in Singapore  in 2003 \cite{CDC1} (black line) and the numerical solution of the model \eqref{1.1} with $I_0=30$ and $R_0(a)$ computed by \eqref{1.6} (blue line). The solutions $N(t)$ of the model \eqref{1.1} with $I_0=50$ and $I_0=100$ are exactly the same when we use the corresponding $R_0(a)$, therefore they are not represented here. Right: numerical solution of the $R_0(a)$ function computed by using the continuous model \eqref{1.4} with $I_0=30$, $I_0=50$ and $I_0=100$. }}\label{Fig42}
\end{figure}

\begin{figure}[H]
	\begin{center}
		\includegraphics[scale=0.15]{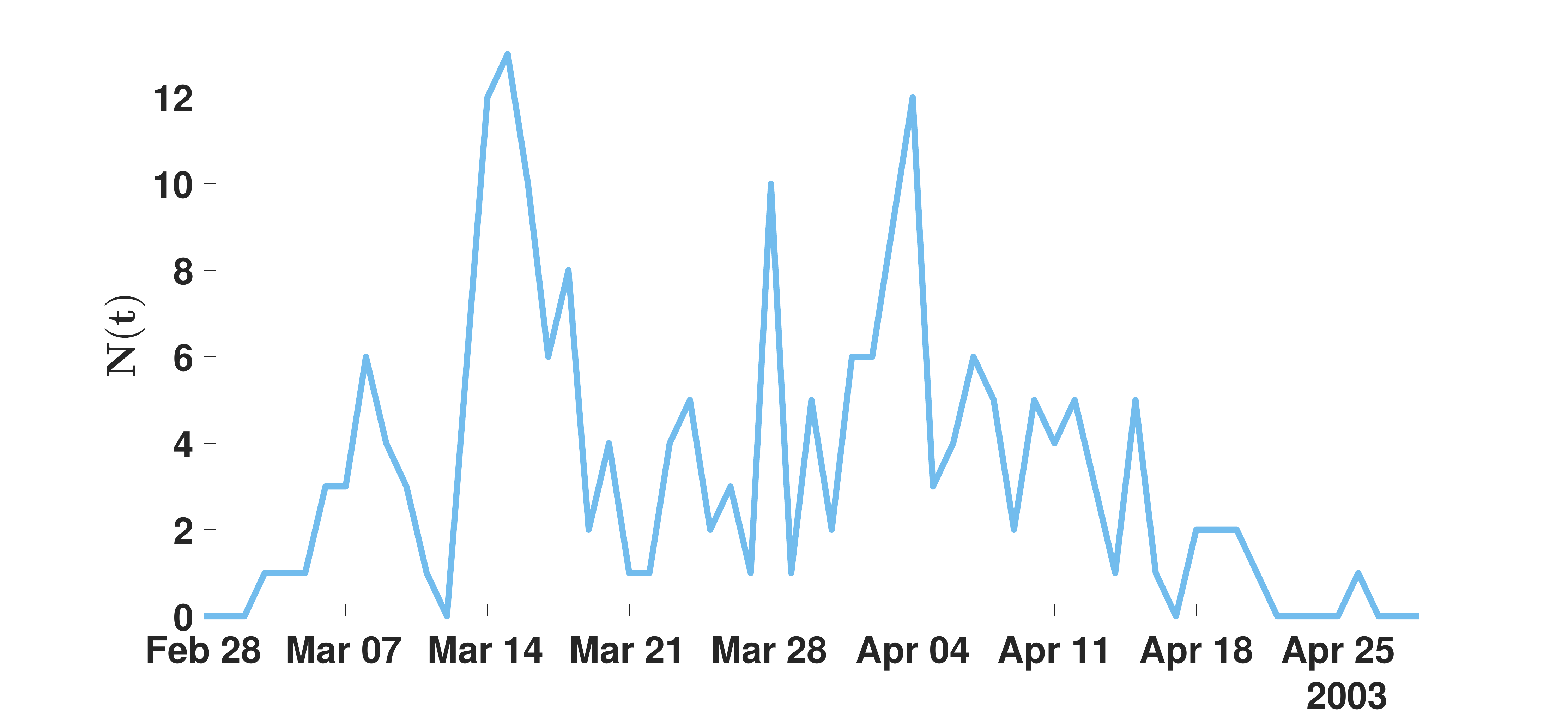}
		\includegraphics[scale=0.15]{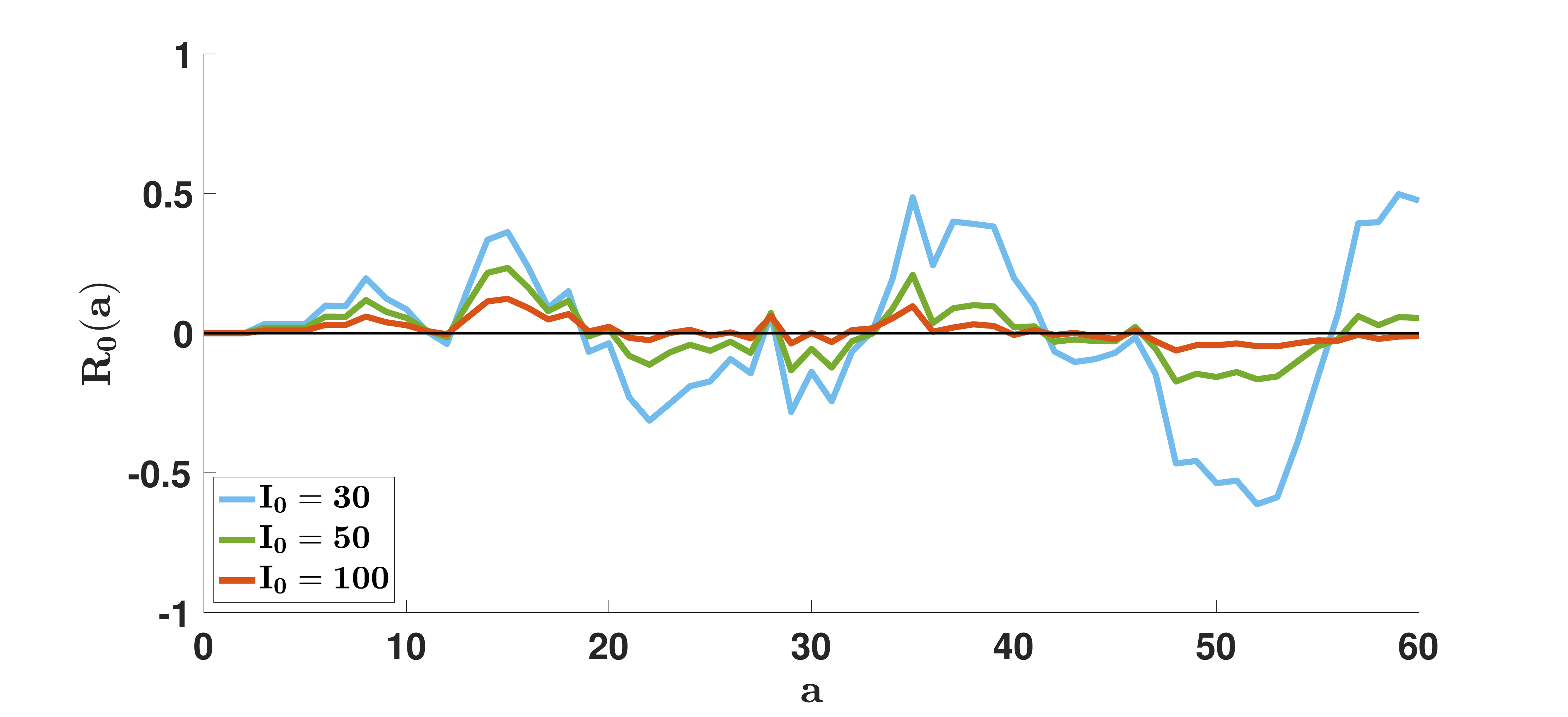}
	\end{center}
	\caption{\textit{Left: Original daily reported cases data of the SARS-CoV-1 outbreak in Singapore  in 2003 \cite{CDC1} (blue line). Right: Numerical solution of the $R_0(a)$ function computed by using the discrete model \eqref{1.6} with $I_0=30$, $I_0=50$ and $I_0=100$. }}\label{Fig43}
\end{figure}

\begin{figure}[H]
	\begin{center}
		\includegraphics[scale=0.15]{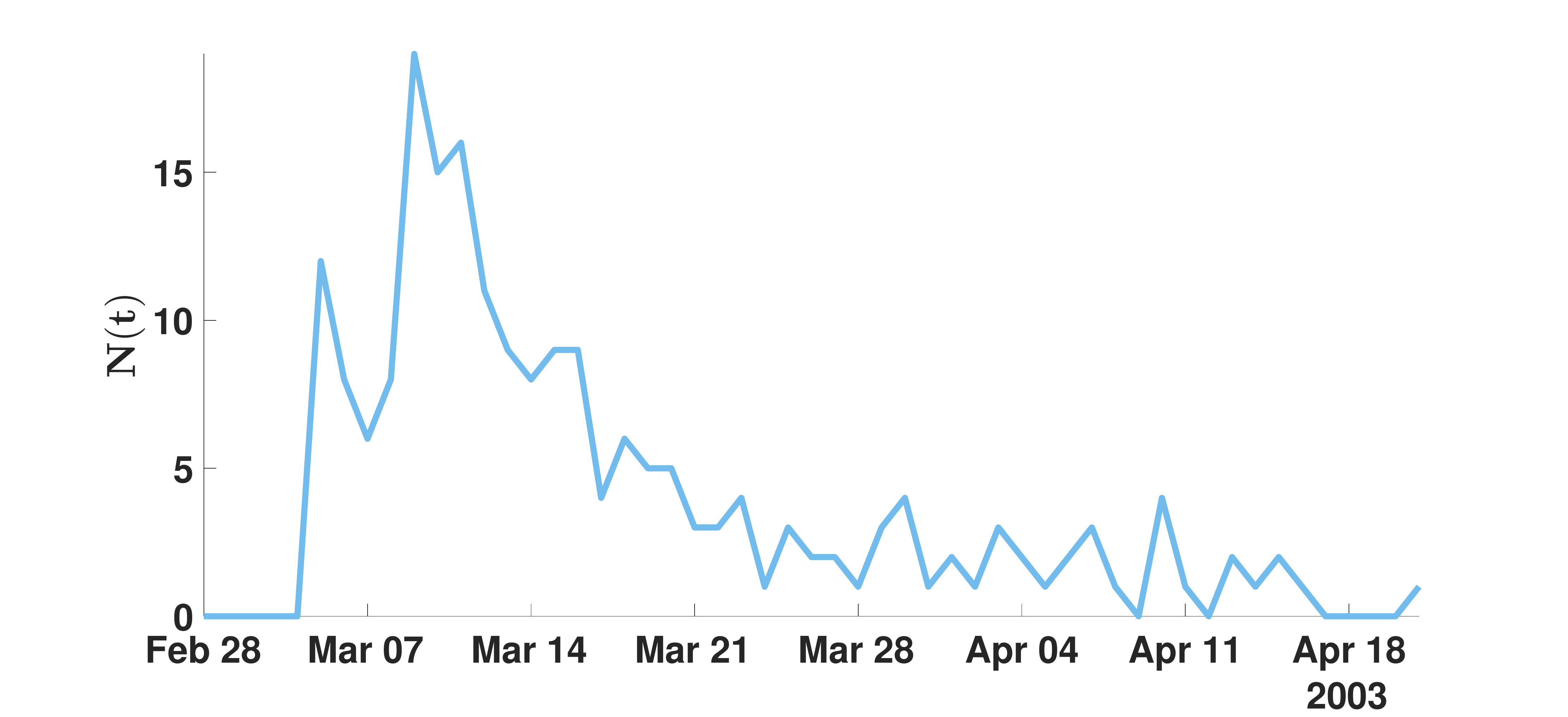}
		\includegraphics[scale=0.15]{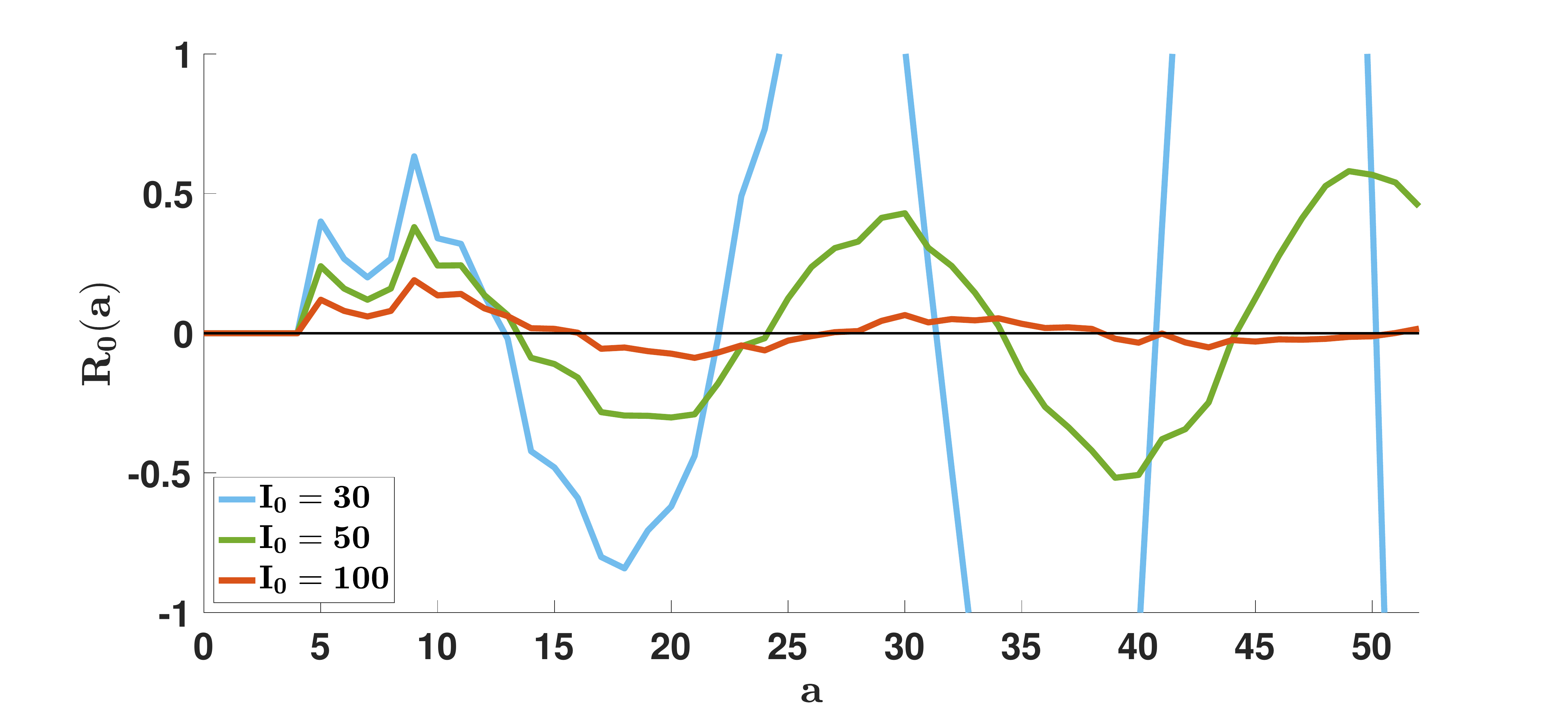}
	\end{center}
	\caption{\textit{Left: Daily reported cases of a theoretical cluster based on the data from the SARS-CoV-1 outbreak in Singapore  in 2003 \cite{CDC1} (blue line). The secondary cases produced by the six patients identified in  \cite{CDC1} are shifted to the same initial infection date of the infector, as if all were produced by the same cluster starting at $t=0$. We also use an incubation period of 5 days, as indicated in  \cite{CDC1}. Right: Numerical solution of the $R_0(a)$ function computed by using the discrete model \eqref{1.6} with $I_0=30$, $I_0=50$ and $I_0=100$.  }}\label{Fig44}
\end{figure}
%
%

\section{Discussion}
\label{Section9}

We see from the numerical simulations in Section 7 that the initial number of infected $I_0$ has a very significant influence on the value of the basic daily reproduction numbers $R_0(d)$: these decrease sharply with $I_0$, until they become negative and their fluctuations increase in stochastic simulations. This tendency to negativity for small $I_0$ and these fluctuations are only corrected when the results are averaged for a large number of stochastic simulations (500). It can also be noted that the stochastic simulations lead to a behavior of the hyper exponential type in the coefficient of variation of the secondary cases produced by an infectious individual, that is to say that it is relatively constant and much greater than 1. This phenomenon is to be related to the exponential character of the gamma distribution used in the simulations. 

In deterministic simulations, one can observe a great dependence of the results with regard to the value of $I_0$ concerning the reconstruction of the daily basic reproduction numbers $R_0(a)$ along the contagiousness period of an individual. In particular, the set of days at which $R_0(a)$ fluctuates becoming in some cases negative is important for the great values of $I_0$. In stochastic simulations, we observe the same behavior for the different curves related to the $R_0(a)$ curves sample, but the expectation of this curves sample considerably attenuates these fluctuations and the coefficient of variation of the curves remains approximately constant, while being greater than 1, as in the case of hyperexponential distributions, in agreement with the exponential character of the part D of the equation \eqref{1.7} defining $R_0(a)$.
 
Concerning the clusters, from observations made during investigations of the start of the outbreak in some countries \cite{Yong,Adam,Desjardins, Pung, Ganyani, Jing, Boehmer, Guttmann, Shams, Chan, Hisada, Han, Ladoy, Tariq}, it is possible to get spatial and temporal information on the start of the epidemic, but  these studies rarely allow the estimation of the parameters $S_0$ and $\tau$ in the concerned population and worse, they give no indication of how long they remain constant. Here we assumed that they remained constant only during the period of exponential growth of new cases observed. 

Our work provides a method to reconstruct the daily basic reproduction number $R_0(a)$ from the daily reported cases data, as long as we consider a cluster starting from a single infected. This is a strong assumption which is usually neglected. It is extremely hard to find in the literature a datasets which satisfies this assumption.  For COVID-19, we did not find any publication including suitable data. While not published yet, we believe that this kind of data could be gathered by a detailed contact-tracing and -- duly anonymized -- could be made available by request. That would allow the development of more realistic and accurate methods for the analysis and forecast of epidemics.

\medskip

\appendix
\addcontentsline{toc}{section}{Appendix} 
\begin{center}
{\LARGE	\textbf{Appendix}}
\end{center}
\section{Euler approximation for the Volterra integral equation}
\label{AppendixA}
We use a numerical scheme for the equation \eqref{4.3} and  \eqref{4.4}.

\begin{equation}  \label{A.1}
	N(t) =\tau  \, S\left(\max \left(t-\Delta t,t_0 \right)\right) \left[ I_0 \times	\Gamma\left(t-t_0\right) +  \int_0^{\max \left(t-\Delta t-t_0,0 \right)} 	\Gamma(a) \, N(t-a) da \right],
\end{equation}
with 
\begin{equation}  \label{A.2}
	S(t)=S_0 - \int_{t_0}^t N( \sigma) d \sigma, 
\end{equation}
and we use the Simpson's rule to compute the integrals. 
\section{Stochastic simulations: Individual Based Model}
\label{AppendixB}
In order to estimate the uncertainty expected in real datasets, we use stochastic simulations that reproduce the first stages of the epidemic in finite populations. We consider a population composed of a finite number $N=S_0+I_0$ of individuals. We start the simulation a time $t=0$ with $S_0 \in \N$ susceptible individuals and $I_0 \in \N $ infected individuals all with age of infection $a=0$.  For each infected individuals we also compute the time spent in the $I$-compartment which follows an exponential law with parameters $1/\nu$.  The principles of the simulations are as follows:
\begin{enumerate}
	\item Individuals meet at random at rate $\tau>0$. In other words, each pair of individual in the population has a contact which occurs at a time following an exponential law of average $1/\tau$. 
	\item When a contact occurs between an infected individual of age $a$ and a susceptible individual, the contact results in a newly infected individual of age $0$ with probability $\beta(a)$. When the infection occurs, the newly infected individual is assigned a duration of infection which follows an exponential law of rate $\nu$. Therefore individuals stay infected on average for a duration of $1/\nu$.
	\item The age of all individuals is updated at fixed intervals of time of size $\Delta t$. Simultaneously the life-span of each infected inviduals is decreased by $\Delta t$ and individual whose life-span has become negative are removed from the system.
\end{enumerate}
The MATLAB code of the IBM is available online at:

\noindent  \url{https://github.com/romainvieme/2022-kermack-mckendrick-single-cohort}.


\begin{thebibliography}{10}	
	
 \bibitem{Adam} D. C. 	Adam, et al., Clustering and superspreading potential of SARS-CoV-2 infections in Hong Kong,  \textit{Nature Medicine},  \textbf{26.11} (2020), 1714-1719.
	
	
 \bibitem{AMM21}  L. Alvarez, M. Colom, J. D. Morel, \& J. M. Morel,  Computing the daily reproduction number of COVID-19 by inverting the renewal equation using a variational technique. \textit{Proceedings of the National Academy of Sciences}, \textbf{118(50)} (2021).
 	
 	
\bibitem{AMM22}   L. Alvarez, J-D. Morel, J-M. Morel, Modeling Covid-19 incidence by the renewal equation after removal of administrative bias and noise, \textit{Biology} \textbf{11(4), 540} (2022). 


\bibitem{Bernoulli}	D. Bernoulli, Essai d''une nouvelle analyse de la mortalit\'e caus\'e par la petite v\'erole et des avantages de l’inoculation pour la pr\'evenir, \textit{M\'em. Math. Phys. Acad. Roy. Sci. Paris}, (1760), 1-45. 
\bibitem{Boehmer} M. M. Böhmer, U. Buchholz, V. M. Corman, M. Hoch, K. Katz, D. V. Marosevic, et al. Investigation of a COVID-19 outbreak in Germany resulting from a single travel-associated primary case: a case series. \textit{The Lancet Infectious Diseases}, \textbf{20.8} (2020), 920-928.

\bibitem{Chan} T. C. Chan, and C. C.  King. Surveillance and epidemiology of infectious diseases using spatial and temporal lustering methods. In: \textit{Infectious disease informatics and biosurveillance} (pp. 207-234). Springer, Boston, MA (2011).

\bibitem{Chao}  D. L. Chao,   M. E. Halloran, V. J. Obenchain,  \& Jr, I. M Longini,  FluTE, a publicly available stochastic influenza epidemic simulation model. \textit{PLoS computational biology}, \textbf{6(1)} (2010), e1000656.

\bibitem{DGM1} J. Demongeot, Q. Griette, and P. Magal. \newblock {SI epidemic model applied to COVID-19 data in mainland China}. \newblock {\it{R. Soc. Open Sci.}} \textbf{7.12} (2020), 201878.
\bibitem{DGMW} J. Demongeot, Q. Griette, P. Magal, and G. Webb. Modeling vaccine efficacy for COVID-19 outbreak in New York city. \textit{Biology}, \textbf{11.3} (2022), 345.
\bibitem{DORSTW} J. Demongeot, K. Oshinubi, M. Rachdi, H. Seligmann, F. Thuderoz \& J. Waku,	Estimation of Daily Reproduction Rates in COVID-19 Outbreak.	\textit{Computation}, \textbf{9, 109} (2021). 

\bibitem{Desjardins} M. R. Desjardins, A.  Hohl, and E. M.  Delmelle.   Rapid surveillance of COVID-19 in the United States using a prospective space-time scan statistic: Detecting and evaluating emerging clusters. \textit{Applied geography} \textbf{118} (2020), 102202.

\bibitem{Dietz-Heesterbeek}  K. Dietz and J. A. P. Heesterbeek, Daniel Bernoulli’s epidemiological model revisited, \textit{Math. Biosci.},  \textbf{180}, (2002),  1-21.



\bibitem{Dietz-Heesterbeek}  K. Dietz and J. A. P. Heesterbeek, Daniel Bernoulli’s epidemiological model revisited, \textit{Math. Biosci.}, \textbf{180}, 1-21, (2002).

\bibitem{DM} A. Ducrot, and P. Magal,	Volterra integral equations in the framework of integrated semigroups \textit{(in preparation)}. 




\bibitem{Ganyani} T. Ganyani, C. Kremer, D. Chen, A. Torneri, C. Faes, J. Wallinga, and N. Hens Estimating the generation interval for coronavirus disease (COVID-19) based on symptom onset data, March 2020. \textit{Eurosurveillance}, \textbf{25.17} (2020) 2000257.

\bibitem{DGM2} Q.~Griette, J.~Demongeot, and P.~Magal. \newblock {A robust phenomenological approach to investigate COVID-19 data for   France}. \newblock {\it{Math. Appl. Sci. Eng.}}, 2021.
\bibitem{DGM3} Q. Griette,  J. Demongeot, and P.  Magal. What can we learn from COVID-19 data by using epidemic models with unidentified infectious cases?. \textit{Mathematical Biosciences and Engineering}, \textbf{19.1} (2022), 537-594.
\bibitem{Guttmann} A. Guttmann, L. Ouchchane, X. Li, I. Perthus, J. Gaudart, J. Demongeot, and J. Y. Boire. Performance map of a cluster detection test using extended power. \textit{International Journal of Health Geographics}, \textbf{12.1} (2013),  1-10.

\bibitem{Han} L. Han, P. Shen, J. Yan, Y. Huang, X. Ba, W. Lin, et al. Exploring the Clinical Characteristics of COVID-19 Clusters Identified Using Factor Analysis of Mixed Data-Based Cluster Analysis. \textit{Frontiers in medicine}, \textbf{8} (2021).
\bibitem{Hisada} S. Hisada, T. Murayama, K. Tsubouchi, S. Fujita, S. Yada, S. Wakamiya, and E.  Aramaki. Surveillance of early stage COVID-19 clusters using search query logs and mobile device-based location information. \textit{Scientific Reports}, \textbf{10.1} (2020), 1-8.

\bibitem{Jing} Q. L. Jing, M. J. Liu, Z. B. Zhang, L. Q. Fang, J. Yuan, A. R. Zhang, et al.  Household secondary attack rate of COVID-19 and associated determinants in Guangzhou, China: a retrospective cohort study. \textit{The Lancet Infectious Diseases}, \textbf{20.10} (2020), 1141-1150.

\bibitem{Kermack-McKendrick2} W. O. Kermack and A. G. McKendrick, Contributions to the mathematical theory of epidemics: II, \textit{Proc. R. Soc. Lond. Ser. B}, \textbf{138}, (1932), 55-83.

\bibitem{Kremer} C. Kremer, T. Ganyani, D. Chen, A. Torneri, C. Faes, J. Wallinga, and N. Hens. Authors' response: estimating the generation interval for COVID-19 based on symptom onset data. \textit{Eurosurveillance}, \textbf{25.29}) (2020), 2001269.

\bibitem{Ladoy} A. Ladoy, O. Opota, P. N. Carron, I. Guessous, S. Vuilleumier, S. Joost, and G. Greub. Size and duration of COVID-19 clusters go along with a high SARS-CoV-2 viral load: A spatio-temporal investigation in Vaud state, Switzerland. \textit{Science of The Total Environment}, \textbf{787} (2021), 147483.

\bibitem{LMSW1} Z. Liu, P. Magal, O. Seydi, \& G. Webb,   Understanding unreported cases in the COVID-19 epidemic outbreak in Wuhan, China, and the importance of major public health interventions. \textit{Biology}, \textbf{9(3), 50} (2020).
	
	
\bibitem{Nishiura1}  H. Nishiura, Time variations in the transmissibility of pandemic influenza in Prussia, Germany, from 1918-19. Theor. Biol. Med. Model. 4, 20 (2007).


\bibitem{Nishiura2}  H. Nishiura, G. Chowell, “The effective reproduction number as a prelude to statistical estimation of time-dependent epidemic trends” in Mathematical and Statistical Estimation Approaches in Epidemiology, G. Chowell, J. M. Hyman, L. M. A. Bettencourt, C. Castillo-Chavez, Eds. (Springer, Dordrecht, Netherlands, 2009), pp. 103-121.



\bibitem{Pan} Y. Pan, D. Zhang, P. Yang, L. L. M. Poon, and Q. Wang, Viral load of sars-cov-2 in
clinical samples. \textit{The Lancet Infectious Diseases}, \textbf{20(4)} (2020), 411–412. 


\bibitem{Pung} R. Pung, C. J. Chiew, B. E. Young, S. Chin, M. I. Chen,  H. E. Clapham, et al. Investigation of three clusters of COVID-19 in Singapore: implications for surveillance and response measures. \textit{The Lancet}, \textbf{395.10229} (2020), 1039-1046.

\bibitem{Shams}  F. Shams, A. Abbas, W. Khan, U. S. Khan, and R. Nawaz.  A death, infection, and recovery (DIR) model to forecast the COVID-19 spread. \textit{Computer Methods and Programs in Biomedicine Update}, \textbf{2} (2022), 100047.

\bibitem{Tariq} A. Tariq, Y. Lee, K. Roosa, S. Blumberg, P. Yan, S. Ma, and G. Chowell. Real-time monitoring the transmission potential of COVID-19 in Singapore, March 2020. \textit{BMC medicine}, \textbf{18.1} (2020), 1-14.
\bibitem{Worobey} M. Worobey, T. Watts, R. McKay, et al. 1970s and `Patient 0' HIV-1 genomes illuminate early HIV/AIDS history in North America. \textit{Nature} \textbf{539} (2016),  98--101. \url{https://doi.org/10.1038/nature19827}

\bibitem{Yong} S. E. F.	Yong, et. al., Connecting clusters of COVID-19: an epidemiological and serological investigation, \textit{The Lancet Infectious Diseases} \textbf{20.7} (2020), 809-815.	
	
	
\bibitem{CDC1} Centers for Disease Control and Prevention (CDC), Severe acute respiratory syndrome Singapore, \textit{MMWR. Morbidity and mortality weekly report} \textbf{52.18}, 2005. \url{https://www.cdc.gov/mmwr/preview/mmwrhtml/mm5218a1.htm}
	

\end{thebibliography}
\end{document}